\DeclareMathSymbol{\twoheadrightarrow}  {\mathrel}{AMSa}{"10}
        \def\GG{{\mathcal G}}
\def\H{{\mathbb H}}
\def\Q{{\mathbb Q}}
\def\Z{{\mathbb Z}}
\def\C{{\mathbb C}}
\def\CC{{\mathfrak C}}
                         \def\cc{{\mathfrak c}}
                   \def\sp{{\mathfrak{sp}}}
                   \def\sL{{\mathfrak{sl}}}
\def\RR{{\mathbb R}}
\def\F{{\mathbb F}}
\def\P{{\mathbb P}}
\def\f{{\tilde F}}
                     \def\f0{{\mathfrak f}}
\def\A8{{\mathbf A}_8}
\def\Alt{\mathrm{Alt}}
                       \def\tr{\mathrm{tr}}
                       \def\Ad{\mathrm{Ad}}
\def\RR{{\mathfrak R}}
\def\Perm{\mathrm{Perm}}
\def\Gal{\mathrm{Gal}}
\def\End{\mathrm{End}}
\def\Aut{\mathrm{Aut}}
\def\Hom{\mathrm{Hom}}
\def\I{{\mathrm{Id}}}
                               \def\MT{{\mathrm{MT}}}
                               \def\mt{{\mathrm{mt}}}
\def\ST{{\mathbf S}}
\def\r{{\mathfrak G}}
\def\fchar{\mathrm{char}}
             \def\red{\mathrm{red}}
\def\GL{\mathrm{GL}}
\def\SL{\mathrm{SL}}
\def\Sp{\mathrm{Sp}}
            \def\Gp{\mathrm{Gp}}
               \def\Lie{\mathrm{Lie}}
\def\M{\mathrm{M}}
\def\A{\mathbf{A}}
\def\dim{\mathrm{dim}}
\def\Oc{{\mathcal O}}
       \def\ZZ{{\mathcal Z}}
       \def\PGL{\mathrm{PGL}}
                            \def\PSL{\mathrm{PSL}}
      \def\g{{\mathfrak g}}
            \def\m{{\mathfrak m}}
                         \def\AA{\mathrm{A}}
                         \def\BB{\mathrm{B}}
                         \def\CC{\mathrm{C}}
                         \def\DD{\mathrm{D}}
\newtheorem{thm}{Theorem}[section]
\newtheorem{lem}[thm]{Lemma}
\newtheorem{cor}[thm]{Corollary}
\newtheorem{prop}[thm]{Proposition}
\theoremstyle{definition}
\newtheorem{defn}[thm]{Definition}
\newtheorem{rem}[thm]{Remark}
        \newtheorem{sect}[thm]{}
\title[Families of absolutely simple hyperelliptic jacobians]{Families of absolutely simple hyperelliptic jacobians}
\author[Yuri G.\ Zarhin]{Yuri G.\ Zarhin}
\address{Department of Mathematics, Pennsylvania State University,
University Park, PA 16802, USA}
\email{zarhin\char`\@math.psu.edu}
\begin{document}

\begin{abstract}
We prove that the jacobian of a hyperelliptic curve $y^2=(x-t)h(x)$ has no
nontrivial endomorphisms over an algebraic closure of the ground field $K$ of
characteristic zero if $t \in K$ and the Galois group of the polynomial $h(x)$
over $K$ is an alternating or symmetric group on  $\deg(h)$ letters and
$\deg(h)$ is an even number $>8$. (The case of odd $\deg(h)>3$ follows easily
from previous results of the author.)

\end{abstract}

\maketitle

\section{Statements}
\label{intro}

 As usual, $\Z$, $\Q$ and $\C$ stand for the ring of integers, the
field of rational numbers
 and the field of complex numbers
respectively. If $\ell$ is a prime then we write $\F_{\ell},\Z_{\ell}$ and
$\Q_{\ell}$ for the ${\ell}$-element (finite) field, the ring of ${\ell}$-adic
integers and field of ${\ell}$-adic numbers respectively. If $A$ is a finite
set then we write $\#(A)$ for the number of its elements.

Let $K$ be a field of characteristic different from $2$, let $\bar{K}$ be its
algebraic closure and $\Gal(K)=\Aut(\bar{K}/K)$ its absolute Galois group. Let
$n\ge 5$ be an integer, $f(x)\in K[x]$  a degree $n$ polynomial {\sl without
multiple roots}, $\RR_f \subset \bar{K}$ the $n$-element set of its roots,
$K(\RR_f) \subset \bar{K}$ the splitting field of $f(x)$ and
$\Gal(f)=\Gal(K(\RR_f)/K)$ the Galois group of $f(x)$ over $K$. One may view
$\Gal(f)$ as a certain group of permutations of $\RR_f$. Let $C_f: y^2=f(x)$ be
the corresponding hyperelliptic curve of genus $\lfloor(n-1)/2\rfloor$. Let
$J(C_f)$ be the jacobian of $C_f$; it is a $\lfloor(n-1)/2\rfloor$-dimensional
abelian variety that is defined over $K$. We write $\End(J(C_f))$ for the ring
of all
 $\bar{K}$-endomorphisms of $J(C_f)$. As usual, we write $\End^{0}(J(C_f))$ for the corresponding
 (finite-dimensional semisimple) $\Q$-algebra $\End(J(C_f))\otimes\Q$.

In \cite{ZarhinMRL,ZarhinMMJ,ZarhinBSMF}
the author proved the following statement.

\begin{thm}
\label{MRL1} Suppose that $\Gal(f)$ is either the full symmetric group $\ST_n$
or the alternating group $\A_n$. Assume also that either $\fchar(K)\ne 3$ or $n
\ge 7$. Then
 $\End(J(C_f))=\Z$. In particular,
$J(C_f)$ is an absolutely simple abelian variety.
\end{thm}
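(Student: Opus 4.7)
The plan is to control $\End(J(C_f))$ through the Galois action on the $2$-torsion $J(C_f)[2]$. I would begin by recalling the classical identification of $J(C_f)[2]$, as an $\F_2[\Gal(K)]$-module, with the ``heart'' of the permutation representation of $\Gal(f)\subseteq\Perm(\RR_f)$ on the root set: the augmentation kernel $(\F_2^{\RR_f})_0=\ker(\mathrm{sum})$ when $n$ is odd, and its quotient $(\F_2^{\RR_f})_0/\F_2\!\cdot\!(1,\ldots,1)$ when $n$ is even. Either way this is an $\F_2$-vector space of dimension $2g=2\lfloor(n-1)/2\rfloor$, and the Galois action factors through $\Gal(f)$ by permutation of coordinates.

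Next I would verify that, under the hypothesis $\Gal(f)\in\{\ST_n,\A_n\}$ with $n\ge 5$, this is an absolutely simple $\F_2[\Gal(f)]$-module --- a standard fact in the modular representation theory of $\ST_n$ and $\A_n$ (for $n\ge 5$ the heart is irreducible over $\F_2[\ST_n]$, and its restriction to $\A_n$ remains irreducible since the partition $(n-1,1)$ is not its own transpose). Schur's lemma then gives $\End_{\F_2[\Gal(K)]}(J(C_f)[2])=\F_2$, and combining this with the standard injection
\[
  \End(J(C_f))\otimes_{\Z}\F_2\;\hookrightarrow\;\End_{\F_2[\Gal(K')]}\bigl(J(C_f)[2]\bigr)
\]
(for $K'/K$ a finite Galois extension over which every $\bar K$-endomorphism of $J(C_f)$ is defined) would show that $\End(J(C_f))$ is a torsion-free $\Z$-module of rank at most $1$; since it contains the identity, it must equal $\Z$.

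The hard part is making this last step genuinely work: passing to $K'$ may shrink the image of Galois, and one must ensure that enough of $\A_n\subseteq\Gal(f)$ survives over $K'$ to keep the $2$-torsion absolutely simple. Simplicity of $\A_n$ for $n\ge 5$ means that any nontrivial normal subgroup of $\Gal(f)$ either equals $\Gal(f)$ or (when $\Gal(f)=\ST_n$) equals $\A_n$, so the worst case is a quadratic twist; one then controls this via the standard bound on the field of definition of endomorphisms in terms of small torsion fields. Finally, the exclusion of the pair $\fchar(K)=3$, $n\in\{5,6\}$ presumably reflects the exceptional isomorphisms $\A_5\cong\PSL_2(\F_4)$ and $\A_6\cong\PSL_2(\F_9)$, which in characteristic $3$ can inject extra endomorphisms via supersingular reduction, necessitating either a substitute auxiliary prime $\ell$ for the mod-$\ell$ argument or a separate treatment of those small cases.
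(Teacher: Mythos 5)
Your first half is fine as far as it goes: identifying $J(C_f)_2$ with the heart of the permutation module, checking its (absolute) simplicity for $\ST_n$ and $\A_n$, and feeding Schur's lemma into the injection $\End_K(J(C_f))\otimes\F_2\hookrightarrow \End_{\Gal(K)}(J(C_f)_2)$ does prove $\End_K(J(C_f))=\Z$. But the theorem concerns $\bar K$-endomorphisms, and the passage from $K$ to $\bar K$ is exactly where your sketch fails. Over the field $K'=K(J(C_f)_4)$ supplied by Silverberg's theorem, where all endomorphisms are defined, the image of $\Gal(K')$ in $\Aut_{\F_2}(J(C_f)_2)$ is trivial, so simplicity of the $2$-torsion module over $K'$ is destroyed and the Schur-lemma control evaporates; and the assertion that ``the worst case is a quadratic twist'' is wrong. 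Knowing that the subalgebra of Galois invariants in $\End^0(J(C_f))$ is $\Q$ is a priori compatible with $\End^0(J(C_f))$ being an imaginary quadratic field, a quaternion algebra, or a matrix algebra (e.g.\ $J(C_f)$ isogenous over $\bar K$ to a power, or to a product of Galois-conjugate factors that are permuted by $\tilde{G}_{4,X,F}$). Eliminating these possibilities is the actual content of the cited proofs, and of the parallel argument carried out in Sections \ref{group}--\ref{tate} of this paper: one exploits the minimal cover $\tilde{G}_{4,X,F}\twoheadrightarrow\tilde{G}_{2,X,F}$, Theorem \ref{rep} to produce a finite perfect group $\Pi\subset \CC_{X,F}^{*}$ that is a minimal cover of the alternating group, and then the Feit--Tits theorem together with Schur's lower bound for proper projective representations of $\A_m$ (degree $\ge m-1$, resp.\ $\ge m-2$ in characteristic $2$), played against the divisibility constraints on $\dim_E(\CC_{X,F})$, to force all endomorphisms to be defined over the ground field. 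Nothing in your proposal substitutes for this step, and the property actually used is stronger than absolute irreducibility of the module over $K$.

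Two further gaps. First, the theorem is stated in every characteristic $\ne 2$ (with a caveat only at $3$), and in positive characteristic one must separately exclude the supersingular case: a supersingular jacobian has a huge $\End^0$ over $\bar K$ while the mod-$2$ argument over $K$ detects nothing, which is why the dichotomy in Theorem \ref{mainAV} has a supersingular branch and why its exclusion required a separate paper among those cited; your proposal never addresses it. Second, your justification that the heart stays irreducible upon restriction to $\A_n$ (``the partition $(n-1,1)$ is not its own transpose'') is a characteristic-zero style criterion; in characteristic $2$ the correct input is Mortimer's theorem on the modular permutation modules of doubly transitive groups, as used in Lemma \ref{AA8}. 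The conclusion you want is true, but not for the reason you give, and the speculation about the characteristic-$3$ exclusion is likewise not substantiated.
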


The aim of this note is to discuss the structure of $\End(J(C_f))$ when $f(x)$
has a root in $K$ and the remaining degree $(n-1)$ factor of $f(x)$ has
``large" Galois group over $K$.

\begin{rem}
\label {oneroot} Suppose that $t \in K$ is a root of $f(x)$. By the division
algorithm, $f(x)=(x-t)h(x)$ with $t \in K$ and $h(x)$ a  polynomial of degree
$n-1$ with coefficients in $K$. Then $\RR_f$ is the disjoint union of singleton
$\{t\}$ and the $(n-1)$-element set $\RR_h$ of roots of $h(x)$. Clearly,
$K(\RR_h)=K(\RR_f)$ and $\Gal(h)=\Gal(f)$.
\end{rem}

Our first result is the following  statement.

\begin{thm}
\label{even} Suppose that $n=\deg(f)\ge 6$ is even, $f(x)=(x-t)h(x)$ with $t
\in K$ and $h(x) \in K[x]$. Suppose that $\Gal(h)$ is either the full symmetric
group $\ST_{n-1}$ or the alternating group $\A_{n-1}$. Assume also that either
$\fchar(K)\ne 3$ or $n\ge 8$. Then $\End(J(C_f))=\Z$. In particular, $J(C_f)$
is an absolutely simple abelian variety.
\end{thm}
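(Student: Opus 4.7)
The approach is to reduce Theorem \ref{even} to Theorem \ref{MRL1} by exhibiting a $K$-isomorphism of $C_f$ with a hyperelliptic curve $C_g$ whose defining polynomial $g \in K[x]$ has odd degree $n-1$ and the same Galois group over $K$ as $h$. Once this reduction is in hand, the theorem follows immediately, because the hypothesis ``$\fchar(K)\neq 3$ or $n\geq 8$'' translates into ``$\fchar(K)\neq 3$ or $n-1\geq 7$'', which is precisely the assumption required by Theorem \ref{MRL1} in degree $n-1$.

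The $K$-birational change of coordinates proceeds in two steps, both available because $t\in K$. First, the translation $x\mapsto x-t$ puts $f$ into the form $x\,\tilde h(x)$ with $\tilde h(x):=h(x+t)\in K[x]$; since $f$ has simple roots, $\tilde h(0)\neq 0$. Second, the inversion $x\mapsto 1/x$ together with the rescaling $y\mapsto y/x^{n/2}$ (available because $n$ is even) converts $y^2 = x\,\tilde h(x)$ into $y^2=g(x)$, where $g(x):=x^{n-1}\tilde h(1/x)\in K[x]$ is the reverse polynomial of $\tilde h$ and, thanks to $\tilde h(0)\neq 0$, is separable of degree exactly $n-1$. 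All of this is defined over $K$, so $C_f$ and $C_g$ are isomorphic as hyperelliptic curves over $K$; hence $J(C_f)\cong J(C_g)$ as abelian varieties over $K$ and $\End(J(C_f))=\End(J(C_g))$.

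The next step is to identify $\Gal(g)$ with $\Gal(h)$ as a permutation group. The assignment $\alpha\mapsto 1/(\alpha-t)$ defines a bijection between $\RR_h$ and $\RR_g$ that commutes with the action of the absolute Galois group $\Gal(K)$, simply because $t\in K$. Consequently, $\Gal(g)$ and $\Gal(h)$ are identified as subgroups of the symmetric group on an $(n-1)$-element set; in particular $\Gal(g)$ is $\ST_{n-1}$ or $\A_{n-1}$.

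Finally, I would apply Theorem \ref{MRL1} to $g$: the degree $n-1$ is odd and $\geq 5$ (as $n\geq 6$), its Galois group is full symmetric or alternating, and the characteristic hypothesis has been verified above. This yields $\End(J(C_g))=\Z$, and therefore $\End(J(C_f))=\Z$ and $J(C_f)$ is absolutely simple. There is really no substantial obstacle in this argument; the whole proof is a birational reduction, and the only point to check carefully is that the chain of substitutions is $K$-rational, which is automatic from $t\in K$.
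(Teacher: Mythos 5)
Your proposal is correct and follows essentially the same route as the paper: translate by $t$, reverse the polynomial via $x\mapsto 1/x$ with the rescaling $y\mapsto y/(x-t)^{n/2}$, note that $\alpha\mapsto 1/(\alpha-t)$ preserves the splitting field so the reversed polynomial has the same Galois group, and apply Theorem \ref{MRL1} to the resulting degree $n-1$ model.
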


\begin{proof}
We have $n=2g+2$ where $g$ is the genus of $C_f$ and $n-1=2g+1=\deg(h)$.
Let us consider the polynomials
$$h_1(x)=h(x+t), \ h_2(x)=x^{n-1} h_1(1/x) \in K[x].$$
They all have degree $n-1\ge 5$; in addition, $n-1\ge 7$ if $\fchar(K)=3$. We
have
$$\RR_{h_1}=\{\alpha-t\mid \alpha \in \RR_h\}, \
\RR_{h_2}=\{\frac{1}{\alpha-t}\mid \alpha \in \RR_h\}.$$
This implies that
$$K(\RR_{h_2})=K(\RR_{h_1})=K(\RR_{h})$$
and therefore
$$\Gal(h_2)=\Gal(h_1)=\Gal(h).$$
In particular, $\Gal(h_2)=\ST_{n-1}$ or  $\A_{n-1}$.

Now the equation for $C_f$ may be written down as
$$y^2=(x-t) h_1(x-t).$$
Dividing both sides of the latter equation by $(x-t)^{2(g+1)}$, we get
$$[y/(x-t)^{g+1}]^2=(x-t)^{-(n-1)} h_1(x-t)=h_2(1/(x-t)).$$
 Now the standard
substitution
$$x_1=1/(x-t), \ y_1=y/(x-t)^{g+1}$$
establishes a birational $K$-isomorphism between $C_f$ and a hyperelliptic curve
$$C_{h_2}: y_1^2=h_2(x_1).$$
Now the result follows readily from Theorem \ref{MRL1} applied to the polynomial $h_2(x)$.
\end{proof}

The case of odd $n$ is more difficult.

\begin{thm}
\label{odd} Suppose that $n=\deg(f)\ge 9$ is odd and $f(x)=(x-t)h(x)$ with $t
\in K$ and $h(x) \in K[x]$. Suppose that $\Gal(h)$ is either the full symmetric
group $\ST_{n-1}$ or the alternating group $\A_{n-1}$. Then one of the
 following conditions holds.

\begin{itemize}
\item[(i)]
 $\End^{0}(J(C_f))$ is either $\Q$ or a quadratic field.
 In particular, $J(C_f)$ is an absolutely simple abelian variety.
 \item[(ii)]
 $\fchar(K)>0$ and $J(C_f)$ is a supersingular abelian variety.
 \end{itemize}
\end{thm}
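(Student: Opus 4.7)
The plan is to study the Galois module $J[2]$ (where $J=J(C_f)$) and, via the Faltings--Tate machinery, bound $\End^{0}(J)$.

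The first step is to describe $J[2]$ as a $\Gal(K)$-module. Since $n$ is odd, $J[2]$ is canonically isomorphic to $\ker(\Sigma\colon \F_{2}^{\RR_f}\to\F_{2})$, with $\Gal(K)$ acting through $\Gal(f)=\Gal(h)$ by permuting basis vectors. Because $t\in K$ fixes $e_{t}$, projection onto $\F_{2}^{\RR_h}$ is a $\Gal(h)$-equivariant isomorphism $J[2]\cong \F_{2}^{\RR_h}$, the natural permutation module of $\Gal(h)\in\{\ST_{n-1},\A_{n-1}\}$ on $n-1=2g$ letters. Since $n-1$ is even, the all-ones vector $\mathbf{1}$ lies in the augmentation kernel $\ker(\Sigma)$, yielding the composition series
$$0\subsetneq \langle\mathbf{1}\rangle\subsetneq \ker(\Sigma)\subsetneq \F_{2}^{\RR_h}$$
with composition factors $\F_{2},W,\F_{2}$, where $W$ is absolutely simple of dimension $n-3\geq 6$. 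Because $\Gal(h)\supset \A_{n-1}$ is $2$-transitive on $\RR_h$, any $\Gal(h)$-equivariant endomorphism is a combination of the identity and the rank-one operator $\mathbf{1}\mathbf{1}^{T}$; since $(\mathbf{1}\mathbf{1}^{T})^{2}=(n-1)\mathbf{1}\mathbf{1}^{T}\equiv 0\pmod{2}$, one obtains
$$\End_{\Gal(h)}(J[2])\cong \F_{2}[\epsilon]/(\epsilon^{2}),$$
of $\F_{2}$-dimension $2$.

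Next, I would invoke the Tate conjecture for abelian varieties over finitely generated fields (Faltings in characteristic zero, the author in positive characteristic), which provides the standard injection $\End_{L}(J)\otimes\F_{2}\hookrightarrow \End_{\Gal(L)}(J[2])$ for any finite extension $L/K$. Choosing $L$ large enough so that $\End^{0}(J)=\End^{0}_{L}(J)$, and letting $H\subset \Gal(h)$ be the image of $\Gal(L)$, if $\A_{n-1}\subset H$ then $H$ is still $2$-transitive on $\RR_h$ and the centralizer bound $\dim_{\F_{2}}\End_{H}(J[2])=2$ persists, yielding $[\End^{0}(J):\Q]\leq 2$. Moreover, the $\Gal(L)$-submodules of $J[2]$ remain $0\subsetneq \langle\mathbf{1}\rangle\subsetneq \ker(\Sigma)\subsetneq J[2]$, with proper nonzero terms of odd dimensions $1$ and $n-2$; since $A[2]$ has even dimension $2\dim A$ for every abelian subvariety $A\subset J$ defined over $L$, $J$ admits no nontrivial $L$-rational isogeny decomposition and is therefore absolutely simple. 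By Albert's classification $\End^{0}(J)$ is then $\Q$ or a quadratic field, giving case~(i).

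The main obstacle is the hypothesis $\A_{n-1}\subset H$. The Galois group $\Gal(L/K)$ acts on the $\Q$-algebra $\End^{0}(J)$ through $\Q$-automorphisms, so the size of $L/K$ is controlled by the (finite) automorphism group of $\End^{0}(J)$. In characteristic zero a bookkeeping argument should show that $L\cap K(\RR_h)$ is contained in the discriminant quadratic subfield $K(\RR_h)^{\A_{n-1}}$, forcing $\A_{n-1}\subset H$ and case~(i). In positive characteristic, this confinement can fail, and $L$ can contain $K(\RR_h)$; the mod-$2$ centralizer then trivialises and permits a much larger endomorphism algebra. Classifying the Albert types compatible with the constraint $\dim J=g$ and with such an enlarged $L$ leaves only the supersingular possibility $J\sim E^{g}$ for a supersingular elliptic curve~$E$, which is case~(ii).
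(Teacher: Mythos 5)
Your description of the $2$-torsion and its Galois structure is correct and matches the paper's Lemma \ref{order2} and Lemma \ref{AA8}: $J[2]\cong\F_2^{\RR_h}$, the only proper nonzero submodules have dimensions $1$ and $n-2$, and the centralizer of a $2$-transitive image is $2$-dimensional. Granting that the image $H$ of $\Gal(L)$ still contains $\A_{n-1}$, your deduction that $J$ is $L$-simple with $\dim_{\Q}\End^0_L(J)\le 2$ is sound. (A side remark: the injection $\End_L(J)\otimes\F_2\hookrightarrow\End_{\Gal(L)}(J[2])$ is elementary -- an endomorphism killing $J[2]$ is divisible by $2$ and $\End(J)$ is a free $\Z$-module -- so the appeal to Faltings/Tate is both unnecessary and problematic, since Theorem \ref{odd} is stated over an arbitrary field of characteristic $\ne 2$ and you would first have to reduce to a finitely generated base, which you do not address.)

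The genuine gap is exactly the point you defer: proving $\A_{n-1}\subseteq H$, i.e.\ controlling the field $L$ over which the extra endomorphisms become defined. Your proposed bookkeeping is circular: you want to bound $[L:K]$ by the automorphism group of $\End^0(J)$, but a priori $\End^0(J)$ could be a large algebra (up to dimension $(2g)^2$) with a large automorphism group, and the claim that $L\cap K(\RR_h)$ lies in the discriminant quadratic subfield is only evident a posteriori, once one already knows $\End^0(J)$ is at most quadratic -- which is what is being proved. Likewise, in positive characteristic the assertion that ``classifying the Albert types \dots leaves only the supersingular possibility'' is not an argument: once the mod-$2$ centralizer bound is lost, nothing in your sketch excludes the intermediate cases $2<\dim_{\Q}\End^0(J)<(2g)^2$. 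This is precisely what the paper's Theorem \ref{mainAV} is built to handle, with no appeal to Faltings: by Silverberg all endomorphisms are defined over $K(J_4)$; after making the base $2$-balanced (Remark \ref{overL}) the group $\tilde{G}_{4,J(C_f),K}$ is a minimal cover of $\A_{2g}$, hence perfect with no proper subgroup of index $<2g$ (Lemma \ref{An}); Lemmas \ref{Ksimple} and \ref{AA8} give $\End^0_K$ equal to $\Q$ or a quadratic field and Lemma \ref{centerK} pins down the center; then, if some endomorphism is not defined over the base, Theorem \ref{rep} produces a finite perfect group $\Pi$ in the centralizer mapping minimally onto $\A_{2g}$, and the Schur/Feit--Tits lower bound $\ge 2g-1$ for projective representations of covers of $\A_{2g}$ (Lemma \ref{An}(ii)) contradicts the divisibility constraint $d\mid 2g$, $d\le g$; the only remaining possibility is $\dim_{\Q}\End^0=(2g)^2$, which by Lemma \ref{centerDim} forces positive characteristic and supersingularity. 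Some argument of this kind, ruling out endomorphisms defined only over extensions, is missing from your proposal, and it is the heart of the theorem.
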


When the genus is at least $5$, we may improve the result as follows.

\begin{thm}
\label{oddZ} Suppose that $n=\deg(f)\ge 11$ is odd and $f(x)=(x-t)h(x)$ with $t
\in K$ and $h(x) \in K[x]$. Suppose that $\Gal(h)$ is either the full symmetric
group $\ST_{n-1}$ or the alternating group $\A_{n-1}$. If $\fchar(K)=0$ then
$\End(J(C_f))=\Z$.
\end{thm}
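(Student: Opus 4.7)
The plan is to combine Theorem \ref{odd} with a $2$-torsion analysis and a characteristic-zero refinement. Since $\fchar(K)=0$, option (ii) of Theorem \ref{odd} is vacuous, so $\End^0(J(C_f))$ is either $\Q$ or a quadratic field. If $\End^0(J(C_f))=\Q$ then $\End(J(C_f))$ is an order in $\Q$ and hence equals $\Z$, which is what is needed. So the key task is to exclude the possibility that $E:=\End^0(J(C_f))$ is a quadratic field.

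Suppose for contradiction that $E$ is a quadratic field; after replacing $J(C_f)$ by an isogenous abelian variety we may assume $\Oc_E\subseteq \End(J(C_f))$, so that $\Oc_E/2\Oc_E$ acts on $J(C_f)[2]$ and commutes with $\Gal(K)$. Because $n$ is odd, $J(C_f)[2]$ is the sum-zero subspace of $\F_2^{\RR_f}$; the decomposition $\RR_f=\{t\}\sqcup \RR_h$ together with the fact that $t\in K$ is Galois-fixed gives a $\Gal(K)$-equivariant isomorphism $J(C_f)[2]\cong \F_2^{\RR_h}$ on which $\Gal(K)$ acts via the natural permutation representation of $\Gal(h)=\ST_{n-1}$ or $\A_{n-1}$. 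Since $n-1\ge 10$, both groups are $2$-transitive on $\RR_h$, so counting $\Gal(K)$-orbits on $\RR_h\times \RR_h$ gives $\dim_{\F_2}\End_{\Gal(K)}(J(C_f)[2])=2$; an explicit check identifies the commutant with $\F_2[\epsilon]/(\epsilon^2)$, generated by $\IT$ and the nilpotent ``all-ones'' endomorphism $v\mapsto (\sum_\alpha v_\alpha)\cdot \mathbf{1}$ (nilpotent because $n-1$ is even). Embedding $\Oc_E/2\Oc_E$ into this algebra rules out the cases $\Oc_E/2\Oc_E\cong \F_4$ and $\Oc_E/2\Oc_E\cong \F_2\times \F_2$, and hence forces $2$ to ramify in $E$.

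It remains to eliminate every quadratic field $E$ in which $2$ ramifies, and here I would use the characteristic-zero hypothesis together with $g=(n-1)/2\ge 5$. If $E$ is totally real then $E$ acts on $H^0(J(C_f)_{\overline K},\Omega^1)$, making it a module over $E$ of rank $g/2$ and forcing $g$ to be even, which already eliminates the RM case whenever $g$ is odd. For imaginary quadratic $E$ (partial CM) the eigenspace decomposition of $\Lie(J(C_f)_{\C})$ under the two embeddings of $E$ into $\C$ produces a signature $(p,q)$ with $p+q=g$ and $p,q\ge 1$, and this Hodge-theoretic datum must be reconciled with the Galois image. My intention is to finish by combining the Hodge constraint with the large-image techniques of \cite{ZarhinMRL,ZarhinMMJ,ZarhinBSMF} applied at a well-chosen odd prime $\ell$: there $\F_\ell^{\RR_h}$ is semisimple as an $\F_\ell[\Gal(K)]$-module and its commutant is tightly controlled, allowing one to exhibit Galois elements whose action on $V_\ell(J(C_f))$ has no room to commute with a nontrivial $E$-action once $g\ge 5$.

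The principal obstacle is this final elimination step. The $2$-torsion argument by itself only narrows $E$ to the infinite family of quadratic fields in which $2$ ramifies, and both the real (when $g$ is even) and the imaginary quadratic cases must be handled uniformly. The crux is therefore to convert the large-Galois-image input available for $\ST_{n-1}$ and $\A_{n-1}$ into an obstruction to any nontrivial quadratic commutant in $\End V_\ell(J(C_f))$, with $n\ge 11$ (equivalently $g\ge 5$) being exactly the threshold at which the available structural theorems on hyperelliptic Galois representations suffice.
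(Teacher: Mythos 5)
Your reduction is sound as far as it goes: using Theorem \ref{odd} to reduce to excluding a quadratic field $E=\End^0(J(C_f))$, and the mod-$2$ commutant computation (which is exactly Lemma \ref{AA8}(i) of the paper, via double transitivity) correctly rules out $\Oc_E/2\Oc_E\cong\F_4$ and $\F_2\times\F_2$, i.e.\ the cases where $2$ is inert or split in $E$. (You do need to first pass to an extension of $K$ over which all endomorphisms are defined and check that the image on $2$-torsion stays alternating; the paper does this with Lemma \ref{goursat2}, and it is a fixable omission.) But the step you yourself flag as the ``principal obstacle'' --- eliminating quadratic $E$ in which $2$ ramifies, including real quadratic $E$ when $g$ is even --- is precisely the mathematical content of the theorem, and your proposal does not contain an argument for it. The sketched plan (signature considerations plus ``large-image techniques at a well-chosen odd prime $\ell$'') is not viable as stated: the identification of the mod-$\ell$ Galois module with a permutation module on the roots is special to $\ell=2$, the cited results \cite{ZarhinMRL,ZarhinMMJ,ZarhinBSMF} assume $\Gal(f)$ itself is $\ST_n$ or $\A_n$ (false here, since $f$ has a root in $K$) and are themselves $2$-adic in nature, and the odd-$\ell$ big-monodromy statements in this paper (Theorems \ref{Zchar0}, \ref{MON}) are \emph{deduced from} $\End(J(C_f))=\Z$, so invoking anything of that kind would be circular.

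The paper closes exactly this gap by staying at the prime $2$ but working with the full $2$-adic representation rather than only the $2$-torsion. If $E$ is quadratic, then after the field extension $E_2=E\otimes_{\Q}\Q_2$ embeds in the centralizer of $G_{2,X,K}$ in $\End_{\Q_2}(V_2(X))$ and $V_2(X)$ is a free $E_2$-module (Ribet). When $E_2$ is a field --- which covers your remaining ramified case --- $V_2(X)$ becomes a $g$-dimensional $E_2$-vector space on which the compact group $G_{2,X,K}$ acts $E_2$-linearly while still surjecting onto $\A_{2g}$; since $g<2g-2$, this contradicts the lower bound (Wagner, Feit--Tits; Remark \ref{Am9} and Corollary \ref{noA}) of $2g-2$ for the dimension of any projective representation in characteristic $2$ of a cover of $\A_{2g}$. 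The split case is handled by the invariant-subspace dimensions of Lemma \ref{AA8}(ii). This is packaged as Theorem \ref{simpleA2g}(ii) and Theorem \ref{mainEND}, and it is the ingredient your proposal is missing; without it (or a genuine substitute, e.g.\ a correct Hodge-theoretic exclusion of ramified quadratic multiplication valid for all $g\ge 5$), the proof is incomplete.
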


\begin{rem}
If $K$ is finitely generated over $\Q$ and $h(x)\in K[x]$ is an arbitrary
polynomial of positive even degree without multiple roots then for all but
finitely many $t \in K$ the jacobian of the hyperelliptic curve $y^2=(x-t)h(x)$
is absolutely simple \cite[Theorem 9]{EHALL}. (See also \cite{Masser}.) The
authors of \cite{EHALL} use and compare approaches based on arithmetic geometry
and analytic number theory respectively. In a sense, our approach is purely
algebraic.
\end{rem}

The paper is organized as follows.  Section \ref{group} contains auxiliary
results from group theory. In Sections \ref{AV} and \ref{tate} we study the
structure of  endomorphism algebras
 of abelian varieties with certain Galois properties of points of order $2$.
 Section
 \ref{hyper2} contains an explicit description of the Galois
 module of their points of order $2$ on $J(C_f))$. Combining this description with results
 of Sections \ref{AV} and \ref{tate}, we prove Theorems \ref{odd} and \ref{oddZ}.
In Section \ref{nonconstant} we discuss families of hyperelliptic curves.
 Section \ref{minuscule} contains auxiliary results about $\ell$-adic Lie groups and their
 Lie algebras. In Section \ref{monodromy} we prove  under the conditions of Theorem
 \ref{oddZ} that if the ground field $K$ is finitely generated over $\Q$ then the image
 of $\Gal(K)$ in the automorphism group of the $\ell$-adic Tate module of
 $J(C_f)$ is almost ``as large as possible", namely, it is an open subgroup in the
 group of symplectic similitudes. We use this openness property in order to
 prove for self-products of  $J(C_f)$ the Tate and Hodge conjectures in Sections
 \ref{tateC} and \ref{hodge} respectively. Section \ref{hodge} also contains
 the proof of the Mumford--Tate conjecture for $J(C_f)$.

 I am grateful to the referee, whose comments helped to improve the exposition.

\section{Minimal covers and representations of alternating groups}
\label{group}

\begin{prop}
\label{projAm} Let $m\ge 8$ be an integer, $\A_m$ the corresponding alternating
group. Let $N$ be the smallest positive integer $d$ such that there exists a
group  embedding $\A_m\hookrightarrow \PGL(d,\C)$. Then $N=m-1$.
\end{prop}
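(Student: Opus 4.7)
The plan is to prove both $N\le m-1$ and $N\ge m-1$. The upper bound is immediate: the permutation representation of $\A_m$ on $\C^m$ splits as the trivial plus the $(m-1)$-dimensional standard representation, and for $m\ge 5$ the standard piece is irreducible and, because $\A_m$ is a nonabelian simple group, faithful. Composing with $\GL(m-1,\C)\twoheadrightarrow \PGL(m-1,\C)$ gives $\A_m\hookrightarrow \PGL(m-1,\C)$, hence $N\le m-1$.

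For the lower bound, I would pass from the projective picture to a linear one via the Schur covering group. For $m\ge 8$ the Schur multiplier of $\A_m$ is $\Z/2$, so the Schur cover fits in a nonsplit central extension
$$
1\longrightarrow Z\longrightarrow \tilde\A_m\longrightarrow \A_m\longrightarrow 1,\qquad Z\cong\Z/2.
$$
Given any embedding $\varphi\colon \A_m\hookrightarrow \PGL(d,\C)$, pulling back along $\GL(d,\C)\to \PGL(d,\C)$ yields a central $\C^\times$-extension $E$ of $\A_m$; since $\A_m$ is perfect, its commutator $[E,E]$ is a perfect central extension of $\A_m$, hence a quotient of the universal one $\tilde\A_m$, and the inclusion $[E,E]\hookrightarrow \GL(d,\C)$ pulls back to a complex representation $\tilde\varphi\colon \tilde\A_m\to \GL(d,\C)$ inducing $\varphi$ modulo scalars. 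Simplicity of $\A_m$ combined with injectivity of $\varphi$ forces $\ker(\tilde\varphi)\subseteq Z$, so this kernel is $\{1\}$ or $Z$.

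I would then split into two cases. If $\ker(\tilde\varphi)=Z$, then $\tilde\varphi$ factors through a faithful complex representation of $\A_m$, and the classical fact that for $m\ge 7$ every nontrivial complex representation of $\A_m$ has dimension at least $m-1$ (the minimum attained by the standard representation) gives $d\ge m-1$. If $\ker(\tilde\varphi)$ is trivial, then $\tilde\varphi$ is a genuine spin representation of $\tilde\A_m$; Schur's theory of projective representations of symmetric and alternating groups shows that the minimum dimension of a faithful complex representation of $\tilde\A_m$ equals $2^{\lfloor(m-2)/2\rfloor}$, realised by the basic spin representation, and the elementary inequality $2^{\lfloor(m-2)/2\rfloor}\ge m-1$ for all $m\ge 8$ (left side grows exponentially, right side linearly) again yields $d\ge m-1$.

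The main obstacle is supplying the two minimum-dimension inputs. Both are classical, but neither is trivial: the ordinary bound on $\A_m$ is a standard character-theoretic fact (verifiable at the boundary cases $m=7,8$ from the character tables, and for larger $m$ by, for instance, comparing with indices of proper subgroups), while the spin bound on $\tilde\A_m$ goes back to Schur and is tabulated in modern references on projective representations of symmetric groups. Once these are invoked, the case analysis gives $N=m-1$, as required.
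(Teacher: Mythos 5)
Your proposal is correct and follows essentially the same route as the paper: the upper bound via the standard $(m-1)$-dimensional representation, and the lower bound by lifting the projective embedding to the double cover $\tilde\A_m$ and splitting into the ordinary case (minimal nontrivial degree $m-1$ of $\A_m$) and the spin case (Schur's bound on faithful representations of $\tilde\A_m$, which for $m\ge 8$ exceeds $m-1$). The only cosmetic differences are that you build the lift by pulling back the $\C^\times$-extension and passing to its derived subgroup rather than invoking the universal property directly, and that you quote the exact basic-spin dimension $2^{\lfloor(m-2)/2\rfloor}$ where the paper cites the bound $\ge m-1$ from Schur, Wagner and Kleshchev--Tiep.
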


\begin{proof}
First, (for all $m$) there exists a well-known group embedding $\A_m
\hookrightarrow \GL(m-1,\C)$, which induces $\A_m\hookrightarrow \PGL(m-1,\C)$.
 Let us consider the non-split short  exact sequence of finite groups
$$1 \to \Z/2\Z \hookrightarrow \A_m^{\prime} \twoheadrightarrow \A_m \to 1$$
where $\A_m^{\prime}$ is the universal central extension of $\A_m$ and $\Z/2\Z$
is the center of $\A_m^{\prime}$. (Recall that $m \ge 8$.) Let $c$ be the only
nontrivial element of the center of $\A_m^{\prime}$.

Now suppose that we are given a group  embedding $\A_m\hookrightarrow
\PGL(d,\C)$. We need to prove that $d \ge m-1$. The universality property of
$\A_m^{\prime}$ implies that the embedding is the projectivization of a
(nontrivial) linear representation
$$\rho^{\prime}:\A_m^{\prime}\hookrightarrow \GL(V)$$
where $V=\C^d$. Replacing $V$ by its $\A_m^{\prime}$-invariant subspace of
minimal dimension with nontrivial action of $\A_m^{\prime}$, we may and will
assume that $\rho^{\prime}$ is a nontrivial irreducible (but not necessary
faithful) representation of $\A_m^{\prime}$. We need to prove that
$\dim_{\C}(V)\ge m-1$.  Schur's Lemma implies that
$$\rho^{\prime}(c)\in \{1,-1\}\subset \C^{*}.$$
 If $\rho^{\prime}(c)=1$ then $\rho^{\prime}$ factors through $\A_m $
and we get a nontrivial linear representation $\A_m\hookrightarrow \GL(V)$,
which must be faithful in light of the simplicity of $\A_m$.  If this is the
case then  $\dim_{\C}(V) \ge m-1$ \cite[p. 71, Theorem 2.5.15]{Encyc}. So,
further, we may and will assume that
$$\rho^{\prime}(c)=-1,$$
and therefore  $\rho^{\prime}$ is faithful, i.e., $\rho$ is a {\sl proper}
projective representation of $\A_m$ \cite[p. 584]{Wagner}. Now an old result of
Schur \cite[S. 250]{Schur} (see also \cite[Th. 1.3(ii)]{Wagner} and \cite[Th. A
on p. 1972]{Klesh}) asserts that $\dim_{\C}(V)\ge m-1$.
\end{proof}

\begin{sect}
\label{cover}
 Recall \cite{FT} that a surjective homomorphism of finite groups
$\pi:\GG_1\twoheadrightarrow \GG$ is called a {\sl minimal cover}
if no proper subgroup of $\GG_1$ maps onto $\GG$ .
In particular, if $\GG$ is perfect and $\GG_1\twoheadrightarrow \GG$ is a
minimal cover then $\GG_1$ is also perfect.
In addition, if $r$ is a positive integer such that every subgroup in $\GG$ of index dividing $r$
coincides with $\GG$ then the same is true for $\GG_1$ \cite[Remark 3.4]{ZarhinMZ}. Namely,
every subgroup in $\GG_1$ of index dividing $r$ coincides with $\GG$.
\end{sect}

\begin{lem}
\label{An} Let $m\ge 5$ be an integer, $\A_m$ the corresponding alternating
group and $\GG_1 \twoheadrightarrow \A_m$
 a minimal cover.

 Then:

 \begin{itemize}
\item[(i)] The only subgroup of index $<m$ in $\GG_1$ is $\GG_1$ itself.
\item[(ii)] Suppose that $m \ge 8$. If $d$ is a positive integer such that
there exists a group embedding $\GG_1\hookrightarrow \PGL(d,\C)$ then $d\ge
m-1$.
\end{itemize}
\end{lem}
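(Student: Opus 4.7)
For (i), the plan is to apply the transfer property recalled at the end of Section \ref{cover}. The simple group $\A_m$ ($m\ge 5$) has no proper subgroup of index less than $m$, so for every integer $r$ with $1\le r<m$, every subgroup of $\A_m$ of index dividing $r$ equals $\A_m$. Transferring this property to $\GG_1$, a proper subgroup of $\GG_1$ of index $r<m$ would divide its own index and hence would have to equal $\GG_1$, giving the claim.

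For (ii), the plan is to reduce to Proposition \ref{projAm}. First, convert the projective embedding into a linear one: pulling $\GG_1\hookrightarrow\PGL(d,\C)$ back along $\GL(d,\C)\twoheadrightarrow\PGL(d,\C)$ yields a central extension $1\to\C^{*}\to G\to\GG_1\to 1$ with $G\subset\GL(d,\C)$. Since $\GG_1$ is perfect (as a minimal cover of the perfect simple group $\A_m$), the commutator subgroup $\tilde\GG_1=[G,G]$ is a finite perfect central extension of $\GG_1$ lying in $\GL(d,\C)$. Now pass to a subgroup $H\le\tilde\GG_1$ minimal among those surjecting onto $\A_m$ through $\tilde\GG_1\twoheadrightarrow\GG_1\twoheadrightarrow\A_m$; then $H\twoheadrightarrow\A_m$ is itself a minimal cover, $H\hookrightarrow\GL(d,\C)$, and by part (i) every proper subgroup of $H$ has index at least $m$.

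The core of the argument is to decompose $V=\C^{d}=\bigoplus V_j$ into $H$-irreducibles and bound the dimension of some non-trivial constituent. For each non-trivial $V_j$, the kernel $K_j$ of $H\to\GL(V_j)$ must lie in $L=\ker(H\to\A_m)$: otherwise $K_j L/L$ is a non-trivial normal subgroup of $\A_m=H/L$, hence all of $\A_m$, and then $K_j$ surjects onto $\A_m$, contradicting the minimality of $H$. Applying Clifford's theorem to the normal subgroup $L$, the restriction $V_j|_L$ splits into isotypic components permuted transitively by $\A_m$; by part (i) their number is $1$ or at least $m$. The latter already gives $\dim V_j\ge m$. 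In the isotypic case, $V_j\cong U_j\otimes W_j$ as $L$-modules (with $L$ acting only on $U_j$), and Clifford correspondence supplies a projective representation $\sigma_j\colon\A_m\to\PGL(W_j)$.

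The hard step is to show that at least one such $\sigma_j$ is non-trivial. If every $\sigma_j$ were trivial, then each non-trivial $V_j|_L$ would be $L$-irreducible of isomorphism class fixed by all $H$-conjugation; combined with the minimality of $H$, the faithfulness of $V$, and the equality $H_{2}(\A_m)=\Z/2\Z$ valid for $m\ge 8$, this forces $H$ to be a perfect central extension of $\A_m$, so $H\in\{\A_m,\A_m^{\prime}\}$, and Proposition \ref{projAm} (or its proof, which invokes Schur's bound on proper projective representations of $\A_m$) already gives $d\ge m-1$. Once some $\sigma_j$ is non-trivial, the simplicity of $\A_m$ makes it an embedding $\A_m\hookrightarrow\PGL(W_j)$, and Proposition \ref{projAm} yields $\dim W_j\ge m-1$, so $d\ge\dim V_j\ge\dim W_j\ge m-1$ as required.
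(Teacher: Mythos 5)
Your part (i) is essentially the paper's argument (the fact that $\A_m$ has no proper subgroup of index $<m$, which you quote, is what the paper proves via the coset action before invoking Sect.~\ref{cover}), and your reduction in part (ii) — pulling back to a finite perfect linear group, shrinking to a minimal cover $H\subset\GL(d,\C)$, decomposing $V$ into $H$-irreducibles, and running Clifford theory over $L=\ker(H\to\A_m)$ — is sound as far as it goes. In particular the dichotomy is correct: if the $L$-isotypic components of a nontrivial constituent $V_j$ form an orbit of size $>1$, part (i) gives at least $m$ of them and $\dim V_j\ge m$; if $V_j|_L$ is isotypic but not irreducible then $\sigma_j$ is nontrivial, hence injective, and Proposition~\ref{projAm} gives $\dim W_j\ge m-1$; and if $\sigma_j$ is trivial then indeed $\dim W_j=1$, i.e.\ $V_j|_L$ is irreducible.

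The genuine gap is exactly at the step you call hard. The assertion that ``all $\sigma_j$ trivial'' together with minimality of $H$, faithfulness of $V$ and $H_2(\A_m)=\Z/2\Z$ \emph{forces} $L$ to be central in $H$ (so that $H\in\{\A_m,\A_m^{\prime}\}$) is offered without an argument, and it is not a valid implication: the configuration it overlooks is the symplectic-type (extraspecial) case, which is precisely what makes the Feit--Tits theorem nontrivial. An extraspecial $r$-group $E$ of order $r^{1+2k}$ has a unique faithful irreducible character, of degree $r^k$, whose isomorphism class is stable under \emph{all} automorphisms of $E$; so $L$ can map onto such an $E$, every nontrivial constituent can restrict irreducibly to $L$ with $H$-stable isotype, and yet the conjugation action of $H/L\cong\A_m$ on $L$ modulo its center can be a faithful symplectic action on $\F_r^{2k}$ — $L$ is nowhere near central. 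One does not rule this case out; one bounds $d$ in it, using lower bounds for faithful projective representations of $\A_m$ over fields of characteristic $r$ (Wagner-type bounds as in Remark~\ref{Am9}, giving $2k\ge m-2$ and hence $d\ge r^k\ge m-1$ for $m\ge 8$). This is exactly the content of the paper's appeal to the Theorem on p.~1092 together with Proposition~4.1 and Sect.~4.2 of \cite{FT}, which your proposal replaces by an unproved (and in general false) structural claim; as written, the proof of (ii) is therefore incomplete.
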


\begin{proof}
Let $H$ be a subgroup in $\A_m$ of index $r>1$. Then $\A_m$ acts transitively
on the $r$-element set of (left) $H$-cosets. Therefore there is a nontrivial
homomorphism $\A_m \to \ST_r$, which must be an embedding in light of the
simplicity of $\A_m$. Comparing the orders, we conclude that
$$r! \ge \frac{m!}{2}>(m-1)!$$ and therefore $r \ge m$. This implies that the only subgroup of index $<m$ in
$\A_m$ is $\A_m$ itself. Now arguments of Sect. \ref{cover} imply that
the only subgroup of index $<m$ in $\GG_1$ is $\GG_1$ itself. This proves (i).

Now assume that $m \ge 8$.  By Proposition \ref{projAm}, if $d$ is a positive
integer such that there exists a group embedding $\A_m\hookrightarrow
\PGL(d,\C)$ then $d \ge m-1$.
 Applying Theorem on p. 1092 and Proposition 4.1 (combined with Sect. 4.2) of \cite{FT}, we conclude
 that if  $d$ is a positive integer such that there exists a group embedding
$\GG_1\hookrightarrow \PGL(d,\C)$ then $d\ge m-1$.
\end{proof}

\begin{rem}
\label{Am9} If $m\ge 10$ is an even integer then it follows from results of
Wagner \cite{Wagner2} that every projective representation of $\A_m$ in
characteristic $2$ has dimension $\ge m-2$ \cite[Remark 4.2]{ZarhinMMJ}.

Let $\GG_2\twoheadrightarrow \A_m$ be a surjective homomorphism of finite
groups. Suppose that $\F$ is a field of characteristic $2$ and $d$ a positive
integer such that there exists an embedding $$\GG_2 \hookrightarrow
\PGL(d,\F).$$ I claim that $d \ge m-2$. Indeed, replacing $\GG_2$ by its
suitable subgroup, we may assume that $\GG_2\twoheadrightarrow \A_m$ is a
minimal cover. (E.g., one may take as $\GG_2$ a subgroup of the smallest
possible order that maps surjectively on $\A_m$.) Then the result follows from
a theorem of Feit--Tits \cite[p. 1092]{FT} (see also \cite[Theorem 1]{KL}).
\end{rem}

\begin{cor}
\label{noA}
 Suppose that $m\ge 10$ is an even integer, $L$ is a field that is a finite algebraic
extension of $\Q_2$ and $V$ is a non-zero finite-dimensional vector space over
$L$ with $d:=\dim_L(V)<m-2$. Let $G \subset \Aut_{L}(V)$ be a compact subgroup.

Then there does not exist a surjective continuous homomorphism $G \to \A_m$.
\end{cor}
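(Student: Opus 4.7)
The plan is to reduce modulo the maximal ideal of $\Oc_L$ and then apply Remark~\ref{Am9}. First, since $L$ is a non-archimedean local field and $G\subset\Aut_L(V)$ is compact, $G$ preserves some $\Oc_L$-lattice in $V$: the $G$-orbit of any lattice is finite (the set of lattices is discrete and $G$ is compact), and its sum is $G$-stable. Choosing a basis of such a lattice identifies $V\cong L^d$ and places $G\subset\GL(d,\Oc_L)$. Let $\m$ be the maximal ideal of $\Oc_L$, let $\F=\Oc_L/\m$ be the finite residue field (of characteristic $2$), and let $r:\GL(d,\Oc_L)\twoheadrightarrow\GL(d,\F)$ be the reduction. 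Composing $r|_G$ with the canonical projection $\GL(d,\F)\twoheadrightarrow\PGL(d,\F)$ produces a continuous homomorphism $\pi:G\to\PGL(d,\F)$ with finite image.

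Now suppose, for contradiction, that there is a surjective continuous homomorphism $\phi:G\twoheadrightarrow\A_m$. The key claim is that $\phi$ kills $\ker(\pi)$. Put $N=G\cap\ker(r)$; this is a closed normal subgroup of $G$ contained in the first principal congruence subgroup $1+\m\M_d(\Oc_L)$, which is pro-$2$ because $\F$ has characteristic $2$, so $N$ itself is pro-$2$. Since $\A_m$ is simple for $m\ge 5$, the normal image $\phi(N)$ is either trivial or all of $\A_m$; the latter is impossible because a pro-$2$ group cannot surject onto $\A_m$, which has elements of order $3$. Hence $\phi(N)=1$, so $\phi|_{\ker(\pi)}$ descends to a homomorphism from $\ker(\pi)/N$. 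Via $r$ this quotient embeds into the scalar subgroup $\F^{\ast}\subset\GL(d,\F)$ and is therefore cyclic. Consequently $\phi(\ker(\pi))$ is a cyclic normal subgroup of the simple non-abelian group $\A_m$, and must be trivial.

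Therefore $\phi$ factors through $\pi(G)$, yielding simultaneously a surjection $\pi(G)\twoheadrightarrow\A_m$ and an embedding $\pi(G)\hookrightarrow\PGL(d,\F)$ of finite groups. Since $m\ge 10$ is even and $\F$ has characteristic $2$, Remark~\ref{Am9} applies to $\GG_2=\pi(G)$ and gives $d\ge m-2$, contradicting the hypothesis $d<m-2$. The step that genuinely requires care is the passage from $\GL(d,L)$ to $\GL(d,\Oc_L)$ via a $G$-stable lattice and the identification of the principal congruence kernel as pro-$2$; both are standard for compact subgroups of $\GL(d,L)$ over a non-archimedean local field, so I do not expect any serious obstacle. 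The remainder is a routine combination of the simplicity of $\A_m$ with the pro-$2$ structure of the congruence kernel, together with a direct appeal to Remark~\ref{Am9}.
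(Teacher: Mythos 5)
Your proposal is correct and follows essentially the same route as the paper: find a $G$-stable $\Oc_L$-lattice by compactness, reduce modulo $\m$, use that the congruence kernel is pro-$2$ (hence killed by any continuous surjection onto the simple group $\A_m$), pass to $\PGL(d,\F)$ by killing the scalar part, and invoke Remark~\ref{Am9}. The only cosmetic difference is that you quotient by the full preimage of the scalars (using that its image would be a cyclic normal subgroup of $\A_m$), whereas the paper kills the pro-$2$ kernel first and then the scalar subgroup of the finite reduction; the substance is identical.
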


\begin{proof}
Suppose that there exists a surjective continuous homomorphism $\pi:G
\twoheadrightarrow \A_m$. We write $H$ for $\ker(\pi)$: it is an open normal
subgroup of finite index in $G$ and $G/H \cong \A_m$. The surjectivity of $\pi$
implies that the image of every normal subgroup of $G$ is normal in $\A_m$ and
therefore is either $\{1\}$ or the whole $\A_m$.

 Let $\Oc$ be the ring of integers in $L$. We write $\m$ for
the maximal ideal of $\Oc$ and $\F$ for the (finite) residue field $\Oc/\m$.
Notice that there exists a $G$-stable $\Oc$-lattice $T$ in $V$ of rank $d$.
(Our proof of this assertion follows \cite[Sect. 1.1]{SerreAbelian}.) Indeed,
let $T^{\prime}$ be any $\Oc$-lattice in $V$ of rank $d$ and let $G^{\prime}$
be the set of $s \in G$ such that $s(T^{\prime})=T^{\prime}$. This is an open
subgroup of $G$, because $T^{\prime}$ is an open finitely generated
$\Z_{\ell}$-submodule of $V$. Since $G$ is compact, the compact discrete
$G/G^{\prime}$ is finite. The $\Oc$-lattice $T$ generated by the lattices
$s(T^{\prime}), \ s \in G/G^{\prime}$, is $G$-stable.
 We
have
$$G \subset \Aut_{\Oc}(T)\subset\Aut_{L}(V).$$
We write $G_0$ for the kernel of the reduction map modulo $\m$
$$\red: G \to \Aut_{\Oc}(T/\m T)$$
and $\tilde{G}$ for its image. We have
$$\tilde{G}\subset \Aut_{\Oc}(T/\m T)\cong \GL(d,\F).$$
Clearly, $G_0$ is a pro-$2$-group and $\pi(G_0)$ is a normal $2$-subgroup in
$\A_m$. Since $\A_m$ is simple non-abelian, $\pi(G_0)=\{1\}$. This implies that
$\pi$ factors through a surjective homomorphism
$$\pi_0:\tilde{G}=G/G_0 \twoheadrightarrow \A_m.$$
The surjectivity of $\pi_0$ implies that the center of $\tilde{G}$ goes to the
center of $\A_m$, i.e., $\pi_0$ kills the center of $\tilde{G}$; in particular
$\pi_0$ kills the subgroup $\tilde{Z}$ of scalar matrices in $\tilde{G}$. This
gives us the surjection $\tilde{G}/\tilde{Z} \twoheadrightarrow \A_m$ and the
embedding $\tilde{G}/\tilde{Z}\hookrightarrow \PGL(d,\F)$. It follows from
Remark \ref{Am9} that $d \ge m-2$, which is not the case and we get a desired
contradiction.
\end{proof}

\begin{sect}
Let $g\ge 3$ be an integer. Then $2g \ge 6$ and $\A_{2g}$ is a simple
non-abelian group.

Let $B$ be an $2g$-element set. We write $\Perm(B)$ for the group of all
permutations of $B$. The choice of ordering on $B$ establishes an isomorphism
between $\Perm(B)$ and the symmetric group $\mathrm{S}_{2g}$. We write
$\Alt(B)$ for the only subgroup of index $2$ in $\Perm(B)$. Clearly, every
isomorphism $\Perm(B)\cong\mathrm{S}_{2g}$ induces an isomorphism between
$\Alt(B)$ and the alternating group $\A_{2g}$. Let us consider the
$2g$-dimensional $\F_2$-vector space $\F_2^B$ of all $\F_2$-valued functions on
$B$ provided with the natural structure of faithful $\Perm(B)$-module. Notice
that the standard symmetric bilinear form
$$\F_2^B \times \F_2^B\to \F_2, \ (\phi,\psi)\mapsto \sum_{b\in
B}\phi(b)\psi(b)$$ is non-degenerate and $\Perm(B)$-invariant.

Since $\Alt(B)\subset \Perm(B)$, one may view $\F_2^B$ as a faithful
$\Alt(B)$-module.
\end{sect}

\begin{lem}
\label{AA8}
\begin{itemize}
\item[(i)] The centralizer $\End_{\Alt(B)}(\F_2^B)$ has $\F_2$-dimension $2$.

\item[(ii)] Every proper non-zero $\Alt(B)$-invariant subspace in $\F_2^B$ has
dimension $1$ or $2g-1$. In particular, $\F_2^B$ does not contain a proper
non-zero $\Alt(B)$-invariant even-dimensional subspace.
\end{itemize}
\end{lem}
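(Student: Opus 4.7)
The plan rests on two classical transitivity properties of $\Alt(B) \cong \A_{2g}$: (a) for every $k$ with $1 \le k \le 2g-1$, $\Alt(B)$ acts transitively on the $k$-subsets of $B$, because the $\Perm(B)$-stabilizer of a $k$-subset, $\mathrm{S}_k \times \mathrm{S}_{2g-k}$, contains a transposition whenever $\max(k,2g-k)\ge 2$ (which is automatic for $g\ge 3$); and (b) the point stabilizer $\mathrm{Stab}_{\Alt(B)}(b) \cong \A_{2g-1}$ has orbits $\{b\}$ and $B\setminus\{b\}$ on $B$, since $\A_{2g-1}$ acts transitively on a $(2g-1)$-element set for $g\ge 3$.

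I prove (ii) first. Let $W$ be a proper non-zero $\Alt(B)$-invariant subspace, and attach to each non-zero $v = \sum_{b\in S_v} e_b \in W$ its weight $k(v)=|S_v|$. By (a), if some $v\in W$ has $1 \le k(v) \le 2g-1$, then $W$ contains every weight-$k(v)$ vector. A short symmetric-difference calculation shows that for $k$ even with $2 \le k \le 2g-2$, two weight-$k$ subsets differing by swapping one ``in'' element with one ``out'' element sum to a weight-$2$ vector, so the span of all weight-$k$ vectors contains every weight-$2$ vector and hence equals $H=\{v:\sum v_b=0\}$; for $k$ odd with $1 \le k \le 2g-1$, the same trick plus any single weight-$k$ generator (which lies outside $H$) produces all of $\F_2^B$. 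Thus, unless every non-zero vector in $W$ has weight $2g$ (equivalently $W \subseteq L$), we conclude $W \supseteq H$ or $W = \F_2^B$. Since $W$ is proper and non-zero, the surviving possibilities are $W = L$ or $W = H$, of dimensions $1$ and $2g-1$. Both are odd, which yields the ``in particular'' clause.

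For (i), fix $b\in B$ and observe by (b) that $(\F_2^B)^{\mathrm{Stab}_{\Alt(B)}(b)}$ is $2$-dimensional with $\F_2$-basis $\{e_b, \mathbf{1}\}$. Any $\phi \in \End_{\Alt(B)}(\F_2^B)$ sends $e_b$ to a $\mathrm{Stab}_{\Alt(B)}(b)$-invariant vector, so $\phi(e_b) = \alpha\, e_b + \beta\,\mathbf{1}$ for some $\alpha,\beta \in \F_2$. By $\Alt(B)$-equivariance together with the transitivity of $\Alt(B)$ on $B$, the same $\alpha$ and $\beta$ work for every basis vector: $\phi(e_{b'}) = \alpha\,e_{b'}+\beta\,\mathbf{1}$. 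Hence $\phi = \alpha\cdot \IT + \beta\cdot s$, where $s(v) := (\sum_{b\in B} v_b)\,\mathbf{1}$ is the $\Alt(B)$-equivariant projection onto $L$. Since $\IT$ has rank $2g$ and $s$ has rank $1$, these are $\F_2$-linearly independent, so $\End_{\Alt(B)}(\F_2^B) = \F_2\,\IT \oplus \F_2\, s$, of dimension exactly $2$.

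I do not anticipate a serious obstacle: the only delicate step is checking that the weight-$k$ vectors span $H$ (respectively $\F_2^B$) in the claimed range, but this is a concrete symmetric-difference manipulation, and the hypothesis $g \ge 3$ (so $2g \ge 6$) leaves ample room to choose the swapped elements inside or outside a given subset.
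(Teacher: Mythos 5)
Your proof is correct, but it takes a genuinely different route from the paper's, especially in part (ii). For (i) the paper simply invokes the double transitivity of $\Alt(B)$ together with a lemma of Passman; what you do is reprove exactly that fact by hand: the point stabilizer $\A_{2g-1}$ has two orbits on $B$, so its invariants in $\F_2^B$ are spanned by $e_b$ and $\mathbf{1}$, forcing $\End_{\Alt(B)}(\F_2^B)=\F_2\,\mathrm{id}\oplus\F_2\,s$. (One nitpick: $s$ is not a projection onto $\F_2\mathbf{1}$, since $s(\mathbf{1})=2g\cdot\mathbf{1}=0$ in characteristic $2$; but all you actually use is that $s$ is equivariant and independent of the identity, which is fine.) For (ii) the paper argues module-theoretically: it quotes Mortimer's theorem that $M_1/M_0$ is a simple $\Alt(B)$-module, excludes $2$-dimensional invariant subspaces because $\GL_2(\F_2)$ is solvable while $\A_{2g}$ is simple, excludes $(2g-2)$-dimensional ones by passing to orthogonal complements under the standard form, and then runs a case analysis on $W\cap M_1$. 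Your argument instead works directly with supports: transitivity of $\A_{2g}$ on $k$-element subsets for $1\le k\le 2g-1$ plus the swap identity $1_S+1_{S'}=e_a+e_b$ shows that any invariant subspace containing a vector of weight $k$ with $1\le k\le 2g-1$ contains all weight-$2$ vectors, hence the zero-sum hyperplane $M_1$ (and all of $\F_2^B$ when $k$ is odd), while otherwise $W\subseteq\F_2\mathbf{1}$. This is more elementary and self-contained: it avoids the citation of Mortimer and in fact determines the full submodule lattice $0\subset\F_2\mathbf{1}\subset M_1\subset\F_2^B$, which is stronger than the dimension statement; the trade-off is that it is special to the natural permutation module, whereas the paper's argument is organized so as to rely only on simplicity of the heart $M_1/M_0$ and duality, and so transfers to other doubly transitive modules with that property. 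The only step you leave implicit is that the weight-$2$ vectors span $M_1$, a one-line check (pair off the support of any even-weight vector).
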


\begin{proof}
Since $\Alt(B)$ is doubly transitive, (i) follows from \cite[Lemma
7.1]{Passman}.

Notice that the subspace of $\Alt(B)$-invariants
$$M_0:=(\F_2^B)^{\Alt(B)}=\F_2\cdot 1_B,$$
where $1_B$ is the constant function $1$.

In order to prove (ii), recall that
$$M_{0}\subset M_{1}\subset \F_2^B$$
where $M_{1}$ is the hyperplane of functions with zero sum of values. It is
known \cite{Mortimer} that $M_1/M_0$ is a simple $\Alt(B)$-module; clearly,
$\dim(M_1/M_0)=2g-2$.

First, notice that there are no $\Alt(B)$-invariant two-dimensional
 $\F_2$-vector subspaces in $\F_2^B$. Indeed, let $W_2$ be an $\Alt(B)$-invariant
 $\F_2$-vector subspace in $\F_2^B$ with $\dim_{\F_2}(W_2)=2$. Since $\A_{2g}$ is simple
non-abelian and $\GL_2(\F_2)$ is solvable, every homomorphism
$$\Alt(B) \to \Aut_{\F_2}(W_2)\cong \GL_2(\F_2)$$
is trivial and therefore $W_2$ consists of $\Alt(B)$-invariants; however, the
subspace of $\Alt(B)$-invariants in $\F_2^B$ is just one-dimensional.

Second, if $W$ is a $\Alt(B)$-invariant $(2g-2)$-dimensional subspace of
$\F_2^B$ then its orthogonal complement with respect to the standard form is a
two-dimensional $\Alt(B)$-invariant subspace in $\F_2^B$. This implies that
there are no $\Alt(B)$-invariant $(2g-2)$-dimensional  $\F_2$-vector subspaces
in $\F_2^B$.

Let $W$ be a  $\Alt(B)$-invariant subspace of $\F_2^B$ and assume that
$$2<\dim_{\F_2}(W)<2g-2.$$

This implies that the $\Alt(B)$-invariant subspace $$W_1:=W \bigcap M_1$$ is
{\sl not} $\{0\}$. Since $M_1$ is a hyperplane in $\F_2^B$, either $W=W_1$ or
$\dim_{\F_2}(W)=\dim_{\F_2}(W_1)+1$. If $W_1=M_0$ then
$$\dim_{\F_2}(W) \le \dim_{\F_2}(M_0)+1=2,$$
which could not be the case. If $W_1=M_1$ then
$$\dim_{\F_2}(W) \ge \dim_{\F_2}(M_1)=2g-1,$$
which also could not be the case. This implies that
$$W_1 \ne M_0, \ W_1 \ne M_1.$$

Since $M_0$ is a one-dimensional subspace of $M_1$, either $W_1 \supset M_0$ or
$W_1\bigcap M_0=\{0\}$.

In the former case, $W_1/M_0$ is an $\Alt(B)$-invariant subspace of $M_1/M_0$
and the simplicity of $M_1/M_0$ implies that either $W_1/M_0=\{0\}$, i.e.,
$W_1=M_0$ or $W_1/M_0=M_1/M_0$, i.e., $W_1=M_1$. Since $W_1$ is neither $M_0$
nor $M_1$, we conclude that $W_1\bigcap M_0=\{0\}$. We are  going to arrive to
a contradiction. The natural map $W_1\to M_1/M_0$ is an embedding, whose image
is a non-zero $\Alt(B)$-invariant subspace of $M_1/M_0$; the simplicity of the
$\Alt(B)$-module $M_1/M_0$ implies that the image of $W_1$ coincides with the
whole $M_1/M_0$; in particular,
$$\dim_{\F_2}(W_1)=\dim_{\F_2}(M_1/M_0)=2g-2,$$
and we get the $(2g-2)$-dimensional $\Alt(B)$-invariant subspace, which could
not exist. We get the desired contradiction.
\end{proof}

\begin{thm}
\label{simpleA2g} Let $g \ge 3$ be an integer, $B$ a $2g$-element set, $V$ a
$2g$-dimensional $\Q_2$-vector space, $T$ a $\Z_2$-lattice in $V$ of rank $2g$.
Suppose that
$$G \subset \Aut_{\Z_2}(T) \subset \Aut_{\Q_2}(V)$$
is a compact (in the $2$-adic topology) subgroup. Let
$$\tilde{G}:=\red(G)\subset \Aut_{\F_2}(T/2T)$$
be the image of $G$ with respect to the reduction map modulo $2$
$$\red: \Aut_{\Z_2}(T)\to \Aut_{\F_2}(T/2T).$$
Suppose that there exists a group isomorphism $\tilde{G}\cong \Alt(B)$ such
that the $\Alt(B)$-module $T/2T$ is isomorphic to $\F_2^B$.

Then:

\begin{itemize}
\item[(i)] Every proper non-zero $G$-invariant subspace of $V$ has dimension
either $1$ or $2g-1$. \item[(ii)] Assume that $g \ge 5$. Let $\End_{G}(V)$ be
the centralizer of $G$ in $\End_{\Q_2}(V)$. Suppose that $D$  is a semisimple
commutative $\Q_2$-(sub)algebra of $\End_{G}(V)$ (with the same identity
element) such that the $D$-module $V$ is free. Then $D=\Q_2$, i.e., $D$
consists of scalars.
\end{itemize}
\end{thm}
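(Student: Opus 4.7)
My plan is as follows. For part (i), let $W \subset V$ be a proper nonzero $G$-invariant subspace, and set $T_W := W \cap T$. This is a $G$-stable $\Z_2$-lattice of rank $\dim_{\Q_2} W$, saturated in $T$: the identity $T_W \cap 2T = 2T_W$ holds because any $x = 2y \in W \cap T$ has $y = x/2 \in W$, hence $y \in T_W$. Consequently the image of $T_W$ in $T/2T$ is $T_W/2T_W$, a proper nonzero $\tilde G$-invariant $\F_2$-subspace of $T/2T \cong \F_2^B$. Lemma \ref{AA8}(ii) then forces its $\F_2$-dimension---equivalently $\dim_{\Q_2} W$---to be either $1$ or $2g-1$.

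For part (ii), decompose the semisimple commutative $\Q_2$-algebra as $D = L_1 \times \cdots \times L_k$ with each $L_i$ a finite extension of $\Q_2$, and use the orthogonal idempotents $e_i \in D$ to split $V = \bigoplus_i V_i$ with $V_i := e_i V \cong L_i^r$, where $r$ is the common free rank of $V$ over $D$. Each $V_i$ is $G$-stable because $D$ centralizes $G$. When $k \ge 2$, every $V_i$ is a proper nonzero $G$-invariant subspace, so part (i) forces $r[L_i:\Q_2] \in \{1, 2g-1\}$ for all $i$; an elementary count (two summands of dimension $2g-1$ would already exceed $2g$) leaves only two possibilities: (a) $r = 1$, $k = 2g$, every $L_i = \Q_2$; or (b) $r = 1$, $k = 2$, with $[L_1:\Q_2] = 2g - 1$ and $L_2 = \Q_2$. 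Since $r = 1$ in both cases, each $V_i$ is one-dimensional over $L_i$, so $G$ acts on it through the abelian group $L_i^*$ and therefore acts on $V$ through an abelian quotient. This contradicts the surjection $G \twoheadrightarrow \tilde G \cong \Alt(B) \cong \A_{2g}$ onto a non-abelian group, so we must have $k = 1$.

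It remains to rule out $D = L$ being a proper extension of $\Q_2$, i.e., $[L:\Q_2] \ge 2$ with $V$ free of rank $r = 2g/[L:\Q_2] \le g$ over $L$. Here $G$ is a compact subgroup of $\Aut_L(V) \subset \Aut_{\Q_2}(V)$, and reduction modulo $2$ provides a continuous surjection $G \twoheadrightarrow \A_{2g}$ (its kernel is open in $G$). This is precisely the setup of Corollary \ref{noA} with $m = 2g$ and $d = r$: the hypothesis $g \ge 5$ gives $2g \ge 10$ and $r \le g < 2g - 2$, so the corollary yields the desired contradiction and forces $L = \Q_2$, hence $D = \Q_2$. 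The main obstacle---and the sole place the bound $g \ge 5$ enters the argument---is precisely this appeal to Corollary \ref{noA}: everything up through the reduction to the field case works for $g \ge 3$, but without $2g \ge 10$ one has no representation-theoretic lower bound on $\Alt(B)$ available in characteristic $2$ to exclude proper $2$-adic subfields of the centralizer.
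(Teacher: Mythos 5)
Your proof is correct and follows essentially the same route as the paper: part (i) via saturation of $W\cap T$ and Lemma \ref{AA8}(ii), and part (ii) via the idempotent decomposition of $D$, the non-commutativity of $\A_{2g}$, and Corollary \ref{noA} applied with $m=2g$ and $L=D$ in the field case. The only divergence is bookkeeping in the non-field case (you enumerate the possible splittings and force the free rank $r=1$ to get a commutativity contradiction, while the paper uses divisibility of $\dim_{\Q_2}V_i$ by the rank $h>1$ to contradict (i) directly), which is an inessential variation.
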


\begin{proof}
The reduction map modulo $2$
$$\red: \Aut_{\Z_2}(T)\to  \Aut_{\F_2}(T/2T)=\Aut_{\F_2}(\F_2^{B})$$
induces a surjective continuous homomorphism
$$\pi: G \twoheadrightarrow \tilde{G}=\Alt(B).$$
 In order to prove (i), let us assume that there exists a
$G$-invariant proper non-zero subspace $V_1 \subset V$ and put $T_1:=V_1\bigcap
T$. Clearly, $T_1$ is a  $G$-invariant free $\Z_2$-submodule of $T$, the
quotient $T/T_1$ is a torsion-free $\Z_2$-module and the $\Z_2$-rank of $T_1$
coincides with the $\Q_2$-dimension of $V_1$. Now, $T_1/2T_1$ is
$\tilde{G}=\Alt(B)$-invariant subspace in $T/2T=\F_2^{B}$, whose
$\F_2$-dimension coincides with the rank of $T$, i.e., with the
$\Q_2$-dimension of $V_1$. It follows from Lemma \ref{AA8}(ii) that
$\dim_{\F_2}(T_1/2T_1)=1$ or $2g-1$. It follows that $\dim_{\Q_2}(V_1)=1$ or
$2g-1$.

In order to prove (ii), first notice that $2g\ge 10$. Let $h$ be the rank of
the free $D$-module $V$ and $e=\dim_{\Q_2}(D)$. Clearly,
$$2g=\dim_{\Q_2}(V)=eh;$$
in particular, $e\mid 2g$ and $h\mid 2g$. It is also clear that for each $u\in
D$ the $\Q_2$-dimension of $u(V)$ is divisible by $h$.

Second, we claim that $h$ is greater than $1$. Indeed, if $h=1$ then $G \subset
\End_D(V)=D$; in particular, $G$ is commutative, which could not be the case
since $G$ maps surjectively onto noncommutative $\A_{2g}$.

Now, assume that $D$ is a field.  If $e=1$ then $D=\Q_2$ and we are done. So
further we assume that $e>1$. Then $V$ carries the natural structure of a
$D$-vector space and $G \subset \Aut_D(V)$. Clearly,
$$\dim_D(V)=\frac{1}{e}\dim_{\Q_2}(V)=\frac{2g}{e} \le \frac{2g}{2}=g<2g-2.$$
Corollary \ref{noA} applied to $m=2g$ and $L=D$ tells us that it could not be
the case.

Now assume that $D$ is {\sl not} a field, i.e., it splits into a direct sum
$D=D_1\oplus D_2$ of two non-zero commutative semisimple $\Q_2$-algebras. Let
$e_i$ be the identity element of $D_i$ for $i=1,2$. Clearly, both $e_i$'s
viewed as elements of $\End_{\Q_2}(V)$ are idempotents; in addition, $e_1
e_2=e_2 e_1=0$. Then $V=V_1\oplus V_2$ where $V_i=e_i (V)$. Clearly, both
$V_i$'s are $G$-invariant; in addition $\dim_{\Q_2}(V_i)$ is divisible by $h$
for $i=1,2$. Since $h>1$ and
$$\dim_{\Q_2}(V_1)+ \dim_{\Q_2}(V_2)=\dim_{\Q_2}(V)=2g,$$ we conclude
that $\dim_{\Q_2}(V_i)\ne 1, 2g-1$. This contradicts to the already proven
assertion (i).
\end{proof}

\section{Abelian varieties}
\label{AV}

Let $F$ be a field, $\bar{F}$ its algebraic closure  and
$\Gal(F):=\Aut(\bar{F}/F)$ the absolute Galois group of $F$.

\begin{lem}
\label{goursat} Let $F_1/F$ and $F_2/F$ be two finite Galois extensions of
fields. Suppose that $G_1=\Gal(F_1/F)$ is a solvable group and
$G_2=\Gal(F_2/F)$ is simple non-abelian. Then $F_1$ and $F_2$ are linearly
disjoint over $F$. In particular, the composition
$$\Gal(F_1 F_2/F_1)\subset \Gal(F_1 F_2/F)\twoheadrightarrow  \Gal(F_2/F)$$
is a group isomorphism $\Gal(F_1 F_2/F_1)\cong \Gal(F_2/F)=G_2$. Here $F_1 F_2$
is the compositum of $F_1$ and $F_2$.
\end{lem}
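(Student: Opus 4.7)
The plan is to reduce the linear disjointness assertion to the statement that $F_1\cap F_2=F$, and then verify the latter by a quotient argument that pits solvability against simplicity.

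First I would invoke the standard fact that for two finite Galois extensions of $F$ inside a common algebraic closure, linear disjointness over $F$ is equivalent to $F_1\cap F_2=F$. So the whole content of the first assertion is that the intersection $E:=F_1\cap F_2$ equals $F$. Both $F_1$ and $F_2$ are Galois over $F$, hence so is $E/F$, and by the fundamental theorem of Galois theory, $\Gal(E/F)$ is simultaneously a quotient of $G_1$ and a quotient of $G_2$.

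The next step uses the hypotheses. As a quotient of the solvable group $G_1$, the group $\Gal(E/F)$ is solvable. As a quotient of the simple group $G_2$, it is either trivial or isomorphic to $G_2$. Because $G_2$ is simple non-abelian, $G_2$ is not solvable; hence the latter option is impossible, and $\Gal(E/F)$ must be trivial. Therefore $E=F$, giving the linear disjointness.

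Finally, for the ``in particular'' part I would analyze the natural restriction map
$$r:\Gal(F_1F_2/F_1)\hookrightarrow \Gal(F_1F_2/F)\to \Gal(F_2/F),$$
where the first arrow is inclusion and the second is restriction of automorphisms to $F_2$. Injectivity is immediate: an element of $\Gal(F_1F_2/F_1)$ in the kernel fixes both $F_1$ and $F_2$, hence their compositum $F_1F_2$, and is therefore trivial. Surjectivity is the content of linear disjointness: any $\sigma\in\Gal(F_2/F)$ extends to an automorphism of $F_1F_2$ that is trivial on $F_1$, precisely because $F_1$ and $F_2$ are linearly disjoint over $F$ (equivalently, $[F_1F_2:F_1]=[F_2:F]$). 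Thus $r$ is an isomorphism $\Gal(F_1F_2/F_1)\cong G_2$, completing the proof.

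There is no real obstacle here; the only point requiring a moment of care is the equivalence between $F_1\cap F_2=F$ and linear disjointness, which holds specifically because both extensions are Galois (it would fail in general for arbitrary algebraic extensions).
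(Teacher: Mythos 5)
Your proof is correct, but it takes a different route from the paper. The paper does not pass through the intersection field at all: it observes that a solvable group and a simple non-abelian group have no isomorphic nontrivial quotient, and then invokes Goursat's Lemma to conclude that any subgroup of $G_1\times G_2$ surjecting onto both factors must be all of $G_1\times G_2$; applying this to $\Gal(F_1F_2/F)\subset \Gal(F_1/F)\times\Gal(F_2/F)$ gives the linear disjointness (and the isomorphism $\Gal(F_1F_2/F_1)\cong G_2$ falls out of the product structure). You instead reduce linear disjointness of two finite Galois extensions to the statement $F_1\cap F_2=F$, and settle that by noting $\Gal(F_1\cap F_2/F)$ is simultaneously a quotient of the solvable $G_1$ and of the simple non-abelian $G_2$, hence trivial; you then verify the ``in particular'' isomorphism by hand via the restriction map (injectivity from fixing the compositum, surjectivity from the degree equality $[F_1F_2:F_1]=[F_2:F]$). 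Both arguments hinge on the same group-theoretic incompatibility (a solvable group and a simple non-abelian group share no common nontrivial quotient), but yours is more elementary in that it avoids Goursat's Lemma, at the cost of explicitly citing the standard equivalence between linear disjointness and trivial intersection for Galois extensions — a point you correctly flag as the one place where the Galois hypothesis is essential. The paper's approach buys a slightly slicker packaging: the isomorphism in the conclusion is immediate once one knows $\Gal(F_1F_2/F)$ is the full product, whereas you must argue surjectivity of the restriction map separately.
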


\begin{proof}
The groups $G_1$ and $G_2$ have no isomorphic quotient except the trivial one.
It follows from Goursat's Lemma \cite[Definition 4.1 and Remark
4.4(ii)]{ZarhinSh} that every subgroup $G$ of $G_1\times G_2$ that maps
surjectively on both factors $G_1$ and $G_2$ must coincide with $G_1\times
G_2$. In order to finish the proof, one has to apply this observation to
$$G=\Gal(F_2 F_2/F)\subset \Gal(F_1/F)\times \Gal(F_2/F)=G_1\times G_2.$$

\end{proof}

If $X$ is an abelian variety of positive dimension
 over $\bar{F}$ then we write $\End(X)$ for the ring of all its
$\bar{F}$-endomorphisms and $\End^0(X)$ for the corresponding $\Q$-algebra
$\End(X)\otimes\Q$. We write $\End_F(X)$ for the ring of all  $F$-endomorphisms
of $X$ and $\End_F^0(X)$ for the corresponding  $\Q$-algebra
$\End_F(X)\otimes\Q$ and $\CC$ for the center of $\End^0(X)$. Both $\End^0(X)$
and $\End_F^0(X)$  are semisimple finite-dimensional $\Q$-algebras.

 The  group
$\Gal(F)$ of $F$ acts on $\End(X)$ (and therefore on $\End^0(X)$)
by ring (resp. algebra) automorphisms and
$$\End_F(X)=\End(X)^{\Gal(F)}, \ \End_F^0(X)=\End^0(X)^{\Gal(F)},$$
since every endomorphism of $X$ is defined over a finite separable
extension of $F$.

If $n$ is a positive integer that is not divisible by $\fchar(F)$ then we write
$X_n$ for the kernel of multiplication by $n$ in $X(\bar{F})$; the commutative
group $X_n$ is a free $\Z/n\Z$-module of rank $2\dim(X)$ \cite[End of Sect. 6,
p. 64]{MumfordAV}. In particular, if $n=2$ then $X_{2}$ is an $\F_{2}$-vector
space of dimension $2\dim(X)$.

 If $X$ is defined over $F$ then $X_n$ is a Galois
submodule in $X(\bar{F})$ and all points of $X_n$ are defined over a finite
separable extension of $F$. We write $\bar{\rho}_{n,X,F}:\Gal(F)\to
\Aut_{\Z/n\Z}(X_n)$ for the corresponding homomorphism defining the structure
of the Galois module on $X_n$,
$$\tilde{G}_{n,X,F}\subset
\Aut_{\Z/n\Z}(X_{n})$$ for its image $\bar{\rho}_{n,X,F}(\Gal(F))$
and $F(X_n)$ for the field of definition of all points of $X_n$.
Clearly, $F(X_n)$ is a finite Galois extension of $F$ with Galois
group $\Gal(F(X_n)/F)=\tilde{G}_{n,X,F}$. If $n=2$  then we get a
natural faithful linear representation
$$\tilde{G}_{2,X,F}\subset \Aut_{\F_{2}}(X_{2})$$
of $\tilde{G}_{2,X,F}$ in the $\F_{2}$-vector space $X_{2}$.

If $F_1/F$ is a finite algebraic extension then $F_1(X_n)$ coincides with the
compositum $F_1 F(X_n)$ of $F_1$ and $F(X_n)$

\begin{lem}
\label{goursat2}
 Let $F_1/F$ be a finite solvable Galois extension of fields.
If $\tilde{G}_{n,X,F}$ is a simple nonabelian group then $F_1$ and $F(X_n)$ are
linearly disjoint over $F$ and $\tilde{G}_{n,X,F_1}=\tilde{G}_{n,X,F}$.
\end{lem}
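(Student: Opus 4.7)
The plan is to apply Lemma \ref{goursat} directly, with $F_2$ taken to be $F(X_n)$. By construction, $F(X_n)/F$ is a finite Galois extension with Galois group $\Gal(F(X_n)/F)=\tilde{G}_{n,X,F}$, which by hypothesis is simple non-abelian. On the other hand, $F_1/F$ is finite Galois with solvable Galois group $G_1=\Gal(F_1/F)$. So the hypotheses of Lemma \ref{goursat} are met, and it yields at once that $F_1$ and $F(X_n)$ are linearly disjoint over $F$, and that the natural restriction map
$$\Gal\bigl(F_1\cdot F(X_n)/F_1\bigr)\;\longrightarrow\;\Gal\bigl(F(X_n)/F\bigr)=\tilde{G}_{n,X,F}$$
is an isomorphism.

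To conclude the statement about $\tilde{G}_{n,X,F_1}$, I would use the remark preceding the lemma: $F_1(X_n)=F_1\cdot F(X_n)$, because a point of $X_n$ is defined over $F_1(X_n)$ iff all the conjugates needed to express it lie in the compositum. Therefore $\Gal(F_1(X_n)/F_1)=\Gal(F_1\cdot F(X_n)/F_1)$, and this group is naturally identified with $\tilde{G}_{n,X,F_1}$ by definition of the latter as the image of $\Gal(F_1)\to \Aut_{\Z/n\Z}(X_n)$.

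Combining the two steps: under the restriction $\Gal(F_1)\hookrightarrow \Gal(F)$, the image $\tilde{G}_{n,X,F_1}$ sits inside $\tilde{G}_{n,X,F}$, and the isomorphism obtained from Lemma \ref{goursat} shows that this inclusion is an equality. Hence $\tilde{G}_{n,X,F_1}=\tilde{G}_{n,X,F}$.

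There is essentially no obstacle here: the proof is a one-line invocation of Lemma \ref{goursat} together with the elementary identity $F_1(X_n)=F_1\cdot F(X_n)$. The only thing to be careful about is recording that the identification $\Gal(F_1(X_n)/F_1)=\tilde{G}_{n,X,F_1}$ is compatible with the inclusion $\tilde{G}_{n,X,F_1}\subset \tilde{G}_{n,X,F}\subset \Aut_{\Z/n\Z}(X_n)$ coming from restricting Galois automorphisms, so that the isomorphism produced by Goursat really matches the two groups as subgroups of $\Aut_{\Z/n\Z}(X_n)$ rather than just abstractly.
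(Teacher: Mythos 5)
Your proof is correct and follows exactly the paper's route: apply Lemma \ref{goursat} with $F_2=F(X_n)$ and combine it with the identity $F_1(X_n)=F_1\,F(X_n)$ recorded just before the lemma. The extra care you take in matching $\tilde{G}_{n,X,F_1}$ and $\tilde{G}_{n,X,F}$ as subgroups of $\Aut_{\Z/n\Z}(X_n)$ is a reasonable elaboration of what the paper leaves implicit.
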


\begin{proof}
The result follows from Lemma \ref{goursat} combined with the equality
$F_1(X_n)=F_1 F(X_n)$.
\end{proof}

Now and until the end of this Section we assume  that $\fchar(F)\ne 2$. It is
known \cite{Silverberg} that all endomorphisms of $X$ are defined over
$F(X_4)$; this gives rise to the natural homomorphism
$$\kappa_{X,4}:\tilde{G}_{4,X,F} \to \Aut(\End^0(X))$$ and
$\End_F^0(X)$ coincides with the subalgebra
$\End^0(X)^{\tilde{G}_{4,X,F}}$ of $\tilde{G}_{4,X,F}$-invariants
\cite[Sect. 1]{ZarhinLuminy}.

The field inclusion $F(X_2)\subset F(X_4)$ induces a natural
surjection \cite[Sect. 1]{ZarhinLuminy}
$$\tau_{2,X}:\tilde{G}_{4,X,F}\twoheadrightarrow\tilde{G}_{2,X,F}.$$

\begin{defn} We say that $F$ is 2-{\sl balanced} with respect to
$X$ if $\tau_{2,X}$ is a minimal cover. (See \cite{ElkinZ}.)
\end{defn}

\begin{rem}
\label{overL}
 Clearly, there always exists a subgroup $H
\subset \tilde{G}_{4,X,F}$ such that the induced homomorphism
$H\to\tilde{G}_{2,X,F}$ is surjective and a minimal cover. Let us put
$L=F(X_4)^H$. Clearly,
$$F \subset L \subset F(X_4), \ L\bigcap F(X_2)=F$$
and $L$ is a maximal overfield of $F$ that enjoys these properties. It is also
clear that $H$ and $L$ can be chosen that
$$F \subset L \subset F(X_4), \ L\bigcap F(X_2)=F,$$
$$F(X_2)\subset L(X_2),\ L(X_4)=F(X_4), \
\tilde{G}_{2,X,L}=\tilde{G}_{2,X,F}$$
 and $L$ is $2$-{\sl balanced}   with respect to
$X$ (see \cite[Remark 2.3]{ElkinZ}).
\end{rem}

We will need the following three results from previous work.

\begin{thm}
\label{rep}
 Suppose that $E:=\End_F^0(X)$ is a field that contains the
center $\CC$ of $\End^0(X)$. Let $\CC_{X,F}$ be the centralizer of
$\End_F^0(X)$ in $\End^0(X)$.

Then:

\begin{itemize}
\item[(i)] $\CC_{X,F}$ is a central simple $E$-subalgebra in
$\End^0(X)$. In addition, the centralizer of $\CC_{X,F}$ in
$\End^0(X)$ coincides with $E=\End_F^0(X)$ and
$$\dim_E(\CC_{X,F})=\frac{\dim_{\CC}(\End^0(X))}{[E:\CC]^2}.$$
 \item[(ii)] Assume that $F$ is $2$-balanced with
respect to $X$ and $\tilde{G}_{2,X,F}$ is a non-abelian simple group. If
$\End^0(X)\ne E$ (i.e., not all endomorphisms of $X$ are defined over $F$) then
there exist a finite perfect group $\Pi \subset \CC_{X,F}^{*}$ and a surjective
homomorphism $\Pi \to \tilde{G}_{2,X,F}$ that is a minimal cover.
\end{itemize}
\end{thm}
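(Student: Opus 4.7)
The plan is to reduce to the classical double centralizer theorem, once one has shown that $\End^{0}(X)$ is simple. Because $\End^{0}(X)$ is semisimple, its centre $\CC$ is a product of fields whose primitive idempotents lie in $\CC \subset E$; since $E$ is a field these must be trivial, so $\CC$ is itself a field and $\End^{0}(X)$ is a central simple $\CC$-algebra. Applying the double centralizer theorem to the simple subalgebra $E$ then yields immediately that $\CC_{X,F}=C_{\End^{0}(X)}(E)$ is a central simple $E$-algebra, that its centralizer in $\End^{0}(X)$ is $E$, and that $[E:\CC]\cdot\dim_{\CC}(\CC_{X,F})=\dim_{\CC}(\End^{0}(X))$, which rearranges to the stated formula.

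\textbf{Part (ii).} The plan combines Skolem--Noether, the $2$-balanced hypothesis, and Schur's theory of covering groups. Since every endomorphism of $X$ is defined over $F(X_4)$, the $\Gal(F)$-action on $\End^{0}(X)$ factors through $\tilde{G}_{4,X,F}$ and fixes $E$ pointwise. Each $g \in \tilde{G}_{4,X,F}$ therefore acts as a $\CC$-algebra automorphism of the central simple $\CC$-algebra $\End^{0}(X)$ fixing the subfield $E$, and by Skolem--Noether is realised as conjugation by some $u_{g}\in \CC_{X,F}^{*}$, uniquely determined modulo the centre $\CC^{*}$. This yields a homomorphism
$$\tilde\alpha\colon \tilde{G}_{4,X,F}\longrightarrow \CC_{X,F}^{*}/\CC^{*}$$
whose kernel $N$ is exactly the subgroup acting trivially on $\End^{0}(X)$; the assumption $\End^{0}(X)\ne E$ forces $N\subsetneq \tilde{G}_{4,X,F}$.

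The next step uses the $2$-balanced assumption to show that $\mathrm{Im}(\tilde\alpha)$ is a minimal cover of $\tilde{G}_{2,X,F}$. The normal subgroup $\tau_{2,X}(N)$ of the simple group $\tilde{G}_{2,X,F}$ is either trivial or everything; in the latter case the minimal-cover property of $\tau_{2,X}$ would force $N=\tilde{G}_{4,X,F}$, contradicting the previous step. Hence $N\subset\ker\tau_{2,X}$, and the induced surjection $\mathrm{Im}(\tilde\alpha)\cong\tilde{G}_{4,X,F}/N\twoheadrightarrow \tilde{G}_{2,X,F}$ inherits the minimal-cover property, since a proper subgroup of $\mathrm{Im}(\tilde\alpha)$ surjecting onto $\tilde{G}_{2,X,F}$ would lift to a proper subgroup of $\tilde{G}_{4,X,F}$ doing the same. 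Sect.~\ref{cover} then guarantees that $\mathrm{Im}(\tilde\alpha)$ is a finite perfect group.

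Finally, I lift to a finite perfect subgroup of $\CC_{X,F}^{*}$. The preimage $\tilde\Pi\subset \CC_{X,F}^{*}$ of $\mathrm{Im}(\tilde\alpha)$ sits in a central extension $1\to \CC^{*}\to \tilde\Pi\to \mathrm{Im}(\tilde\alpha)\to 1$. Because $\mathrm{Im}(\tilde\alpha)$ is finite and perfect, Schur's theory of covering groups supplies its universal central cover $\hat{G}\twoheadrightarrow \mathrm{Im}(\tilde\alpha)$, which is itself a finite perfect group; its universal property produces a homomorphism $\hat{G}\to\tilde\Pi$ lifting the quotient, whose image $\Pi_{0}\subset \CC_{X,F}^{*}$ is a finite perfect subgroup surjecting onto $\tilde{G}_{2,X,F}$. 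A subgroup $\Pi\subset \Pi_{0}$ minimal by inclusion among those that still surject onto $\tilde{G}_{2,X,F}$ then gives the desired finite perfect group with a minimal-cover surjection $\Pi\twoheadrightarrow \tilde{G}_{2,X,F}$, perfectness again coming from Sect.~\ref{cover}. The main technical obstacle is precisely this last step: since $\CC^{*}$ is infinite, the naive preimage $\tilde\Pi$ is as well, and the finiteness of the Schur multiplier of a finite perfect group is what makes a finite perfect lift inside $\CC_{X,F}^{*}$ available.
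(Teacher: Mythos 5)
Your proof is correct: part (i) is the classical double centralizer theorem once one notes that $\CC\subset E$ forces $\CC$ to be a field (so $\End^{0}(X)$ is central simple over $\CC$), and part (ii) correctly combines Skolem--Noether applied to the $\tilde{G}_{4,X,F}$-action fixing $E$ pointwise, the $2$-balanced hypothesis together with the simplicity of $\tilde{G}_{2,X,F}$ to get a minimal cover $\mathrm{Im}(\tilde\alpha)\twoheadrightarrow\tilde{G}_{2,X,F}$, and the universal central extension (finiteness of the Schur multiplier) to extract a finite perfect subgroup of $\CC_{X,F}^{*}$, after which passing to an inclusion-minimal subgroup surjecting onto $\tilde{G}_{2,X,F}$ gives the required minimal cover, perfect by Sect.~\ref{cover}. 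The paper itself offers no argument here --- it simply cites Theorem 2.4 of \cite{ElkinZ} --- and your route is essentially the strategy of that reference, so the approaches agree.
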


\begin{proof}
This is Theorem 2.4 of \cite{ElkinZ}.
\end{proof}

\begin{lem}
\label{Ksimple} Assume that $X_2$ does not contain  proper non-trivial
$\tilde{G}_{2,X,F}$-invariant even-dimensional subspaces and the centralizer
$\End_{\tilde{G}_{2,X,F}}(X_2)$ has $\F_2$-dimension $2$.

Then $X$ is $F$-simple and $\End_F^0(X)$ is either $\Q$ or a
quadratic field.
\end{lem}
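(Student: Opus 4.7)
The plan is to decompose the statement into two independent assertions: (a) $X$ is $F$-simple, and (b) $\dim_{\Q}\End_F^0(X)\le 2$; once both are established, the conclusion follows formally. For (a), I would argue by contradiction: if $X$ were not $F$-simple, there would exist a proper non-trivial $F$-abelian subvariety $Y\subsetneq X$ with $0<\dim(Y)<\dim(X)$. The $2$-torsion group $Y_2$ injects into $X_2$ and is a $\Gal(F)$-stable, hence $\tilde{G}_{2,X,F}$-invariant, $\F_2$-subspace of dimension $2\dim(Y)$, which is a positive even integer strictly smaller than $\dim_{\F_2}(X_2)=2\dim(X)$. This directly contradicts the hypothesis that no proper non-trivial even-dimensional $\tilde{G}_{2,X,F}$-invariant subspace of $X_2$ exists. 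Hence $X$ is $F$-simple, and $E:=\End_F^0(X)$ is a division $\Q$-algebra.

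For the bound (b), the key observation is that $\End_F(X)$ acts $\Z$-linearly and $\Gal(F)$-equivariantly on $X_2$, producing a natural ring homomorphism
\[
\End_F(X)\longrightarrow \End_{\tilde{G}_{2,X,F}}(X_2).
\]
Its kernel consists of those $\phi\in\End_F(X)$ that vanish on $X_2$; any such $\phi$ factors as $\phi=[2]\circ\psi$ for a unique $\psi\in\End(X)$, and $\psi$ is automatically $F$-rational since $\phi$ is. Therefore the kernel equals $2\End_F(X)$, and the induced injection $\End_F(X)/2\End_F(X)\hookrightarrow \End_{\tilde{G}_{2,X,F}}(X_2)$ combined with the hypothesis $\dim_{\F_2}\End_{\tilde{G}_{2,X,F}}(X_2)=2$ forces the $\Z$-rank of $\End_F(X)$ to be at most $2$. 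Consequently $\dim_{\Q}E\le 2$.

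Combining (a) and (b): $E$ is a division $\Q$-algebra of $\Q$-dimension $\le 2$; the only possibilities are $E=\Q$ (dimension $1$) and $E$ a quadratic number field (dimension $2$), because a central division algebra over $\Q$ has square $\Q$-dimension, so a non-commutative two-dimensional division $\Q$-algebra cannot occur. The one step that is not entirely mechanical is the kernel identification in part (b), which rests on the standard divisibility fact that an endomorphism of an abelian variety annihilating the whole $2$-torsion is divisible by $[2]$ in the endomorphism ring; this is the point at which I would take the most care.
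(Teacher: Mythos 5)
Your proof is correct: the even-dimensional-subspace hypothesis rules out a proper abelian subvariety $Y$ over $F$ via the invariant subspace $Y_2$ of dimension $2\dim(Y)$, and the injection $\End_F(X)/2\End_F(X)\hookrightarrow \End_{\tilde{G}_{2,X,F}}(X_2)$ (kernel $2\End_F(X)$ by the standard divisibility argument, valid since $\fchar(F)\ne 2$, with $\psi$ Galois-invariant because $\End(X)$ is torsion-free) bounds the rank by $2$, so the division algebra $\End_F^0(X)$ is $\Q$ or a quadratic field. The paper itself gives no argument here but simply cites Lemma 3.4 of \cite{ZarhinMA}, and your reasoning is essentially the standard argument behind that cited lemma, so there is nothing genuinely different to compare.
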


\begin{proof} This is Lemma 3.4  of \cite{ZarhinMA}.
\end{proof}

\begin{lem}
\label{centerDim} Let us  assume that $g:=\dim(X)>0$ and the center of
$\End^0(X)$ is a field, i.e, $\End^0(X)$ is a simple $\Q$-algebra.

Then:

\begin{itemize}
 \item[(i)]
$\dim_{\Q}(\End^0(X))$ divides $(2g)^2$. \item[(ii)] If
$\dim_{\Q}(\End^0(X))=(2g)^2$ then $\fchar(F)>0$ and $X$ is a supersingular
abelian variety.
\end{itemize}
\end{lem}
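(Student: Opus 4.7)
The strategy is to reduce the statement to the Albert--Tate classification of endomorphism algebras of simple abelian varieties via the isotypic decomposition. Since the center $\CC$ of $\End^0(X)$ is a field, $\End^0(X)$ is a simple $\Q$-algebra; by Poincar\'e complete reducibility, $X$ is $\bar{F}$-isogenous to $Y^m$ for some simple abelian variety $Y/\bar{F}$ and some positive integer $m$. Writing $D_0:=\End^0(Y)$ (a division $\Q$-algebra whose center is $\CC$) and $h:=\dim(Y)$, one has $g=mh$, $\End^0(X)\cong M_m(D_0)$, and
$$\dim_{\Q}\End^0(X) = m^2 \dim_{\Q} D_0.$$

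For part (i), the plan is to establish $\dim_{\Q} D_0 \mid (2h)^2$ and multiply by $m^2$. In characteristic zero, Albert's classification of Rosati involutions yields the sharper divisibility $\dim_{\Q} D_0 \mid 2h$, which is verified case-by-case for Albert Types I--IV. In positive characteristic, Tate's theorem on simple abelian varieties gives the identity
$$\dim_{\Q} D_0 \cdot [\CC:\Q] = (2h)^2,$$
which in particular implies divisibility. Multiplying by $m^2$ yields $\dim_{\Q}\End^0(X)\mid (2mh)^2=(2g)^2$.

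For part (ii), the hypothesis $\dim_{\Q}\End^0(X)=(2g)^2$ combined with the identity above forces $\dim_{\Q} D_0=(2h)^2$. In characteristic zero, the sharper bound $\dim_{\Q} D_0\mid 2h$ would require $4h^2\le 2h$, impossible for $h\ge 1$; hence $\fchar(F)>0$. In positive characteristic, Tate's identity then forces $[\CC:\Q]=1$, and by the Honda--Tate theorem the equality $\CC=\Q$ for a simple abelian variety $Y$ over $\bar{F}$ of characteristic $p$ occurs precisely when $Y$ is a supersingular elliptic curve (so $h=1$ and $D_0$ is the quaternion $\Q$-algebra ramified exactly at $\fchar(F)$ and $\infty$). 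Consequently $X\sim E^g$ for a supersingular elliptic curve $E$, which is the definition of $X$ being supersingular.

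The only substantial input is the Albert--Tate classification itself; once it is cited, the rest is bookkeeping. The subtle step that one must make precise is the inference that $\CC=\Q$ in positive characteristic forces the simple $Y$ to be a supersingular elliptic curve rather than merely a summand of one: this uses that the Frobenius eigenvalues of any descent of $Y$ to $\F_q$ lie in $\CC=\Q$ and have absolute value $q^{1/2}$, hence equal $\pm q^{1/2}$, so all Newton slopes of $Y$ equal $1/2$ and simplicity cuts $h$ down to $1$.
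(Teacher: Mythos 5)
Your characteristic-zero half is sound: the reduction of $X$ to an isotypic power $Y^m$ and the Albert bound $\dim_{\Q}D_0\mid 2h$ do give both (i) and the exclusion of equality in (ii) when $\fchar(F)=0$. The gap is in the positive-characteristic half, where you invoke Tate's theorem and Honda--Tate for the simple factor $Y$ over $\bar F$. Those theorems concern abelian varieties over \emph{finite} fields, while in the lemma $F$ is an arbitrary field: in characteristic $p$ it need not be algebraic over $\F_p$, and $Y$ need not be isogenous to an abelian variety defined over a finite field. Your asserted identity $\dim_{\Q}D_0\cdot[\CC:\Q]=(2h)^2$ is false in this generality: for a non-isotrivial ordinary elliptic curve $Y$ over $\overline{\F_p(t)}$ one has $D_0=\CC=\Q$, so the left-hand side is $1$ while $(2h)^2=4$. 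The same example refutes the step ``$\CC=\Q$ forces $Y$ to be a supersingular elliptic curve'' over general base fields of characteristic $p$. So, as written, part (i) in characteristic $p$ and the whole characteristic-$p$ case of (ii) are unproved; to make your route legitimate you would first need an isotriviality input (e.g.\ Grothendieck's theorem that an abelian variety with this much CM in characteristic $p$ is isogenous to one defined over a finite field), which you never supply.

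A characteristic-free repair is available and closer in spirit to what the quoted source does. For (i), use the restriction table valid in every characteristic (\cite[Sect.~21]{MumfordAV}, \cite{Oort}): with $e=[\CC:\Q]$ and $d^2=\dim_{\CC}D_0$, the reduced degree $de$ divides $2h$, hence $\dim_{\Q}D_0=d^2e$ divides $(2h)^2$ and therefore $m^2\dim_{\Q}D_0$ divides $(2g)^2$. For (ii), $\dim_{\Q}D_0=(2h)^2$ together with the faithful action of $D_0\otimes_{\Q}\Q_{\ell}$ on the $2h$-dimensional space $V_{\ell}(Y)$, for every prime $\ell\ne\fchar(F)$, forces $D_0\otimes_{\Q}\Q_{\ell}\cong \M_{2h}(\Q_{\ell})$; hence $\CC=\Q$ and $D_0$ is a central division algebra over $\Q$ split at all finite places except possibly $p=\fchar(F)$. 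The Brauer group of $\Q$ then bounds the index of $D_0$ by $2$, so $2h\le 2$, $h=1$, and $D_0$ is the quaternion algebra ramified exactly at $p$ and $\infty$; thus $Y$ is a supersingular elliptic curve, $X\sim Y^g$ is supersingular, and $\fchar(F)=0$ is excluded by the Albert bound. Note finally that the paper itself offers no argument to compare with: it simply cites Lemma 3.5 of \cite{ZarhinMA}, which is essentially the characteristic-free argument just sketched.
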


\begin{proof} This is Lemma 3.5
\footnote{The $Y$ in \cite[Lemma 3.5]{ZarhinMA} should be $X$ and the
$\End^0(Y)$ should be $\End^0(X)$.} of \cite{ZarhinMA}.
\end{proof}

\begin{thm}
\label{mainAV} Let $g\ge 4$ be an integer and $B$ a $2g$-element set. Let $X$
be a $g$-dimensional abelian variety over $F$. Suppose that there exists a
group isomorphism $\tilde{G}_{2,X,F}\cong \Alt(B)$ such that the
$\Alt(B)$-module $X_2$ is isomorphic to $\F_2^B$.

Then one of the following two conditions holds:

\begin{itemize}
\item[(i)] $\End^0(X)$ is either $\Q$ or a quadratic field. In particular, $X$
is absolutely simple. In addition, every finite subgroup of $\Aut(X)$ is
cyclic.

\item[(ii)] $\fchar(F)>0$ and $X$ is a supersingular abelian variety.
\end{itemize}
\end{thm}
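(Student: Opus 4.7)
The plan breaks into four steps: derive the structure of $\End_F^0(X)$ from the $\Alt(B)$-module structure on $X_2$; pass to a $2$-balanced extension; verify that the centre of $\End^0(X)$ descends; and conclude either (i) or (ii) via a dimension count.

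Step~1. By Lemma~\ref{AA8} the $\Alt(B)$-module $\F_2^B$ has centralizer of dimension $2$ over $\F_2$ and admits no proper non-zero even-dimensional invariant subspace. Since the given identification $\tilde{G}_{2,X,F}\cong\Alt(B)$ carries $X_2$ to $\F_2^B$, Lemma~\ref{Ksimple} shows $X$ is $F$-simple and $\End_F^0(X)$ is $\Q$ or a quadratic field.

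Step~2. To transfer this to the absolute endomorphism algebra, pass to the extension $L$ of $F$ supplied by Remark~\ref{overL}: $L$ is $2$-balanced with respect to $X$, $\tilde{G}_{2,X,L}=\tilde{G}_{2,X,F}\cong\Alt(B)$, and $X_2\cong\F_2^B$ as an $\Alt(B)$-module over $L$. Lemma~\ref{Ksimple} again gives $X$ is $L$-simple and $E:=\End_L^0(X)$ is $\Q$ or a quadratic field; in particular $\End^0(X)\cong M_r(D)$ is a simple $\Q$-algebra with centre $\CC$ a field. If $\End^0(X)=E$, conclusion~(i) holds: $\End^0(X)$ is $\Q$ or quadratic and finite subgroups of $\Aut(X)$, sitting in the units of an order of a characteristic-zero field, are cyclic.

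Step~3. Assume $\End^0(X)\ne E$. I claim $\CC\subset E$. The $\Gal(L)$-action on $\End^0(X)$ preserves $\CC$ and factors through the minimal cover $\tilde{G}_{4,X,L}\twoheadrightarrow\A_{2g}$ (by the $2$-balanced property). Every proper normal subgroup of a minimal cover of the simple group $\A_{2g}$ lies inside the kernel of the projection; hence either the action on $\CC$ is trivial, or its image in $\Aut(\CC)$ admits $\A_{2g}$ as a quotient, and so $\A_{2g}$ is a subquotient of $\Aut(\CC)$. The latter is impossible because $|\Aut(\CC)|\le[\CC:\Q]\le\dim_\Q\End^0(X)\le(2g)^2$ by Lemma~\ref{centerDim}(i), whereas $|\A_{2g}|=(2g)!/2>(2g)^2$ for $g\ge 3$. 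Therefore $\CC\subset E$, and Theorem~\ref{rep}(ii) applies to produce a finite perfect subgroup $\Pi\subset\CC_{X,L}^{*}$ together with a minimal cover $\Pi\twoheadrightarrow\Alt(B)$.

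Step~4. Splitting the central simple $E$-algebra $\CC_{X,L}$ over $\C$ (through any complex embedding of $E$) gives $\Pi\hookrightarrow\GL(n,\C)$ with $n^{2}=\dim_E\CC_{X,L}$. Since scalars form a central subgroup, $\Pi\cap\C^{*}\subset Z(\Pi)\subset\ker(\Pi\twoheadrightarrow\A_{2g})$, so $\Pi/(\Pi\cap\C^{*})$ still surjects onto $\A_{2g}$ and inherits the minimal-cover property (a proper subgroup mapping onto $\A_{2g}$ would lift to a proper subgroup of $\Pi$ mapping onto $\A_{2g}$). The resulting embedding $\Pi/(\Pi\cap\C^{*})\hookrightarrow\PGL(n,\C)$ forces $n\ge 2g-1$ by Lemma~\ref{An}(ii). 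Combined with Theorem~\ref{rep}(i),
\[
\dim_\Q\End^0(X)=[\CC:\Q]\,[E:\CC]^{2}\,\dim_E\CC_{X,L}\ge(2g-1)^{2}.
\]
But $\dim_\Q\End^0(X)\mid(2g)^{2}$ by Lemma~\ref{centerDim}(i), and for $g\ge 4$ the second-largest divisor of $(2g)^{2}$ is $2g^{2}<(2g-1)^{2}$, so $\dim_\Q\End^0(X)=(2g)^{2}$, and Lemma~\ref{centerDim}(ii) yields conclusion~(ii): $\fchar(F)>0$ and $X$ is supersingular.

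The main technical hurdle is Step~3, the descent of the centre $\CC$ to $E$; once that is in hand, the bookkeeping of dimensions closes the argument. A secondary point requiring a little care is that $\Pi/(\Pi\cap\C^{*})$ remains a minimal cover, so Lemma~\ref{An}(ii) can be applied after projectivization.
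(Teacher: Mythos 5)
Your Steps 3--4 track the paper's argument closely (the paper packages them as Lemma \ref{centerK}, Proposition \ref{rep1} and the final dimension count; your descent of the centre by the ``proper normal subgroups of a minimal cover lie in the kernel'' counting trick, and your endgame forcing $\dim_{\Q}\End^0(X)=(2g)^2$ rather than deriving a contradiction from $d\le g<2g-2$ versus $d\ge 2g-1$, are harmless variants). The genuine gap is in Step 2: you assert that because $X$ is $L$-simple and $E=\End_L^0(X)$ is a field, ``in particular'' $\End^0(X)\cong M_r(D)$ is a simple $\Q$-algebra with centre $\CC$ a field. That does not follow. An $L$-simple abelian variety need not be absolutely isotypic: a Weil restriction of an elliptic curve from a quadratic extension of $L$ is $L$-simple with $\End_L^0$ a field, yet its absolute endomorphism algebra is a product of two factors. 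Ruling this out is precisely the content of the first half of the paper's Lemma \ref{centerK}, which uses Silverberg's theorem (the Galois action on $\End^0(X)$ factors through $\tilde{G}_{4,X,L}$) together with Lemma \ref{An}(i) (the minimal cover $\tilde{G}_{4,X,L}$ has no proper subgroup of index $<2g$): the group cannot permute the $r\le g$ simple summands of the centre nontrivially, so the resulting Galois-invariant idempotents would lie in the field $E$, a contradiction. Without this step your Step 3 is also circular, since Lemma \ref{centerDim}, which you invoke for the bound on $[\CC:\Q]$ and later for divisibility and supersingularity, is stated only under the hypothesis that the centre of $\End^0(X)$ is a field.

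The gap is repairable by your own Step-3 device: if $\CC=\oplus_{i=1}^r\CC_i$ with $r>1$, the kernel of the permutation action $\tilde{G}_{4,X,L}\to\ST_r$ is a normal subgroup; it cannot be proper, since then the image (of order at most $r!\le g!$) would have $\A_{2g}$ as a quotient, which is impossible by cardinality; hence each $\CC_i$ is invariant, the idempotents $e_i$ are $\tilde{G}_{4,X,L}$-invariant and therefore lie in $E=\End_L^0(X)$, giving zero-divisors in a field. As written, however, the proposal skips this point, and it is exactly the step the paper spends Lemma \ref{centerK} proving.
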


 \begin{proof}[Proof of Theorem \ref{mainAV}]
 By Remark \ref{overL}, we may and will assume that $F$ is 2-{\sl balanced} with respect to
 $X$, i.e., $\tau_{2,X}:\tilde{G}_{4,X,F}\twoheadrightarrow
\tilde{G}_{2,X,F}=\A_{2g}$ is a minimal cover. In particular,
$\tilde{G}_{4,X,F}$ is perfect, since $\A_{2g}$ is perfect. Since $\A_{2g}$
does not contain a proper subgroup of index $<2g$,
 it
follows from Lemma \ref{An}(i) that $\tilde{G}_{4,X,F}$  does not contain a
proper subgroup of index $<2g$.
Now Lemmas
\ref{Ksimple} and \ref{AA8} imply that
$\End_F^0(X)$ is either $\Q$ or a quadratic field.

Recall that $\CC$ is the center of $\End^0(X)$.

\begin{lem}
\label{centerK}
 Either $\CC=\Q\subset \End_F^0(X)$ or
$\CC=\End_F^0(X)$ is a quadratic field.
\end{lem}

\begin{proof}[Proof of Lemma \ref{centerK}]
Suppose that $\CC$ is not a field. Then it is a direct sum
$$\CC=\oplus_{i=1}^r \CC_i$$
of number fields $\CC_1, \dots , \CC_r$ with $1<r\le \dim(X)=g$. Clearly, the
center $\CC$ is a $\tilde{G}_{4,X,F}$-invariant subalgebra of $\End^0(X)$; it
is also clear that $\tilde{G}_{4,X,F}$ permutes the summands $\CC_i$'s. Since
$\tilde{G}_{4,X,F}$ does not contain proper subgroups of index $\le g$, each
$\CC_i$ is $\tilde{G}_{4,X,F}$-invariant. This implies that the $r$-dimensional
$\Q$-subalgebra
$$\oplus_{i=1}^r \Q \subset \oplus_{i=1}^r \CC_i$$
consists of $\tilde{G}_{4,X,F}$-invariants and therefore lies in
 $\End_F^0(X)$. It follows that $\End_F^0(X)$ has zero-divisors,
 which is not the case. The obtained contradiction proves that $\CC$
 is a field.

 It is known \cite[Sect. 21]{MumfordAV} that $\CC$ contains a totally real number (sub)field $\CC_0$ with
 $[\CC_0:\Q]\mid \dim(X)$ and such that either $\CC=\CC_0$ or  $\CC$ is a purely imaginary
 quadratic extension of $C_0$. Since $\dim(X)=g$, the degree
 $[\CC_0:\Q]$ divides $g$; in particular, the order of
 $\Aut(\CC_0)$ does not exceed $g$. Clearly,   $\CC_0$ is
 $\tilde{G}_{4,X,F}$-invariant; this gives us the natural
 homomorphism $\tilde{G}_{4,X,F}\to \Aut(\CC_0)$, which must be
 trivial, because its kernel is a (normal) subgroup of index $\le g$ and therefore
  coincides with the whole $\tilde{G}_{4,X,F}$.
Therefore
every element of $\CC_0$ is
 $\tilde{G}_{4,X,F}$-invariant. This implies that
 $\tilde{G}_{4,X,F}$ acts on $\CC$ through a certain group homomorphism
 $\tilde{G}_{4,X,F}\to\Aut(\CC/\CC_0)$  and this homomorphism is
 trivial, because the order of $\Aut(\CC/\CC_0)$ is either $1$ (if
 $\CC=\CC_0$) or $2$ (if $\CC\ne \CC_0$). So, every element of $\CC$
 is $\tilde{G}_{4,X,F}$-invariant, i.e.,
  $$\CC\subset \End^0(X)^{\tilde{G}_{4,X,F}}=\End_F^0(X).$$
  This implies that if $\CC\ne\Q$ then $\End_F^0(X)$ is also not $\Q$
  and therefore is a quadratic field containing $\CC$, which
  implies that $\CC=\End_F^0(X)$ is also a quadratic field.
\end{proof}

It follows that $\End^0(X)$ is a simple $\Q$-algebra (and a
central simple $\CC$-algebra). Let us put $E:=\End_F^0(X)$ and
denote by $\CC_{X,F}$ the centralizer of $E$ in $\End^0(X)$. We have
$$\CC\subset E\subset \CC_{X,F}\subset \End^0(X).$$
Combining Lemma \ref{centerK} with Theorem \ref{rep} and Lemma
\ref{centerDim}, we obtain the following assertion.

\begin{prop}
\label{rep1}
\begin{itemize}
\item[(i)] $\CC_{X,F}$ is a central simple $E$-subalgebra in
$\End^0(X)$,
$$\dim_E(\CC_{X,F})=\frac{\dim_{\CC}(\End^0(X))}{[E:\CC]^2}$$
and $\dim_E(\CC_{X,F})$ divides $(2\dim(X))^2=(2g)^2$.
 \item[(ii)] If $\End^0(X)\ne E$ (i.e., not all endomorphisms of $X$ are
defined over $F$) then there exist a finite perfect group $\Pi
\subset \CC_{X,F}^{*}$ and a surjective homomorphism $\pi:\Pi \to
\tilde{G}_{2,X,F}$ that is a minimal cover.
\end{itemize}
\end{prop}

{\bf End of  Proof of Theorem \ref{mainAV}}.
If $\End^0(X)=E$ then we are done. If $\dim_E(\CC_{X,F})=(2g)^2$ then
$$\dim_{\Q}(\End^0(X))\ge \dim_{\CC}(\End^0(X))\ge
\dim_E(\CC_{X,F})=(2g)^2=(2\dim(X))^2$$ and it follows from Lemma
\ref{centerDim} that $\dim_{\Q}(\End^0(X))=(2\dim(X))^2$, $\fchar(F)>0$ and
$X$ is a supersingular abelian variety. So, further we may and will assume that
$$\End^0(X)\ne E, \ \dim_E(\CC_{X,F}) \ne (2g)^2.$$
We need to arrive to a contradiction.
 Let $\Pi \subset
\CC_{X,F}^{*}$ be as in \ref{rep1}(ii). Since $\Pi$ is perfect,
$\dim_E(\CC_{X,F})>1$. It follows from Proposition
 \ref{rep1}(i) that $\dim_E(\CC_{X,F})=d^2$ where $d$ is a positive integer such that
 $$1<d<2g, \quad d\mid 2g.$$
  This implies that
 $$d \le \frac{2g}{2}=g<2g-2.$$
 Let us fix an embedding $E\hookrightarrow \C$ and an isomorphism
 $\CC_{X,F}\otimes_E\C\cong \M_d(\C)$. This gives us an embedding
 $\Pi\hookrightarrow \GL(d,\C)$. Further we will identify $\Pi$
 with its image in $\GL(d,\C)$. Clearly, only central elements of
 $\Pi$ are scalars. It follows that there is a central subgroup
 $\ZZ$ of $\Pi$ such that the natural homomorphism $\Pi/\ZZ\to
 \PGL(d,\C)$ is an embedding. The simplicity of
$\tilde{G}_{2,X,F}=\A_{2g}$ implies that $Z$ lies in the kernel of $\Pi
\twoheadrightarrow \tilde{G}_{2,X,F}=\A_{2g}$ and the induced map $\Pi/Z \to
\tilde{G}_{2,X,F}$ is also a minimal cover. It follows from Lemma \ref{An}(ii)
applied to $\GG_1=\Pi/\ZZ$ that $d \ge 2g-1$. However, we have seen that
$d<2g-2$. This gives us a desired contradiction.

Recall that $\Aut(X)\subset \End^0(X)^{*}$. Since every finite multiplicative
subgroup in a field is commutative, every finite subgroup of $\Aut(X)$ is
cyclic if $\End^0(X)$ is a field.
\end{proof}

     \section{Tate modules of abelian varieties}
     \label{tate}
     We keep the notation and assumptions of the previous Section.  Let $\ell$ be a prime different from
     $\fchar(F)$. Let us consider the ${\ell}$-adic Tate module
     $T_{\ell}(X)$ of $X$  that is the projective limit of $X_{{\ell}^i}$ ($i=1,2,...$) where
     the transition map $X_{{\ell}^{i+1}}\to X_{{\ell}^i}$ is multiplication by ${\ell}$
      \cite[Sect. 18]{MumfordAV}. The
     Tate module $T_{\ell}(X)$ carries the natural structure of a free $\Z_{\ell}$-module
     of rank $2\dim(X)$. The Galois actions on $X_{{\ell}^i}$ glue together to the
     continuous homomorphism
     $$\rho_{{\ell},X}: \Gal(K) \to \Aut_{\Z_{{\ell}}}(T_{\ell}(X)),$$
     providing $T_{\ell}(X)$ with the structure of Galois module. The natural surjective map
     $T_{\ell}(X)\twoheadrightarrow X_{\ell}$ induces an isomorphism of Galois modules
     $$T_{\ell}(X)\otimes_{\Z_{\ell}} \Z_{\ell}/{\ell \Z_{\ell}}=T_{\ell}(X)/{\ell}T_{\ell}(X)\cong X_{\ell}.$$
     We also consider the $2\dim(X)$-dimensional $\Q_{\ell}$-vector space
     $$V_{\ell}(X):=T_{\ell}(X)\otimes_{\Z_{{\ell}}}\Q_{\ell}.$$
     One may view $T_{\ell}(X)$ as a $\Z_{\ell}$-lattice of rank $2\dim(X)$ in $V_{\ell}(X)$.
     Let us put
     $$G_{{\ell},X}=G_{{\ell},X,F}:=\rho_{{\ell},X}(\Gal(F))\subset
     \Aut_{\Z_{{\ell}}}(T_{\ell}(X))\subset \Aut_{\Q_{\ell}}(V_{\ell}(X));$$
     it is known \cite[Sect. 1.2 and Example 1.2.3]{SerreAbelian} that $G_{{\ell},X}$ is a compact $\ell$-adic Lie
     group, whose Lie algebra $\g_{\ell,X}$ is a $\Q_{\ell}$-Lie subalgebra of $\End_{\Q_{\ell}}(V_{\ell}(X))$.

      The reduction map modulo ${\ell}$
     $$\red: \Aut_{\Z_{{\ell}}}(T_{\ell}(X))\to
     \Aut_{\F_{{\ell}}}(T_{\ell}(X)/{\ell}T_{\ell}(X))=\Aut_{\F_{{\ell}}}(X_{\ell})$$
     induces a continuous surjective homomorphism
     $$\pi_{{\ell},X}:G_{{\ell},X} \twoheadrightarrow \tilde{G}_{{\ell},X,F}\subset \Aut_{\F_{{\ell}}}(X_{\ell}).$$

     \begin{rem}
     \label{isogimage}
     Let $X$ and $Y$ be abelian varieties over $F$ and $u:X \to Y$ be an
     $F$-isogeny. Then $u$ induces, by functoriality, an isomorphism of Galois
     modules $V_{\ell}(X)\cong V_{\ell}(Y)$. This implies that there is a
     continuous group isomorphism between the compact profinite groups
     $G_{{\ell},X,F}$ and $G_{{\ell},Y,F}$.
     \end{rem}

Our next statement deals with the case of $\ell=2$.

\begin{thm}
\label{mainEND} Let $g\ge 5$ be an integer and $B$ a $2g$-element set. Let $F$
be a field of characteristic zero and $X$  a $g$-dimensional abelian variety
over $F$. Suppose that there exists a group isomorphism $\tilde{G}_{2,X,F}\cong
\Alt(B)$ such that the $\Alt(B)=\tilde{G}_{2,X,F}$-module $X_2$ is isomorphic
to $\F_2^B$.

Then  $\End(X)=\Z$.
\end{thm}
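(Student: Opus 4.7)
The plan is to combine Theorem~\ref{mainAV} with Theorem~\ref{simpleA2g}(ii). Since $\fchar(F)=0$, alternative~(ii) of Theorem~\ref{mainAV} (supersingularity) is excluded, so $E:=\End^0(X)$ is either $\Q$ or a quadratic field, and $X$ is absolutely simple. Because $\End(X)$ is an order in $\End^0(X)$ containing $\Z$, it suffices to rule out the case that $E$ is a quadratic field.

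Assume for contradiction that $E$ is quadratic. In order to apply Theorem~\ref{simpleA2g}(ii) I need the $E$-action on $V_2(X)$ to commute with the image of $\Gal(F)$, and for this I would invoke Remark~\ref{overL} and replace $F$ by the intermediate field $L$ constructed there. After this replacement $F$ is $2$-balanced with respect to $X$, while $\tilde{G}_{2,X,F}\cong\Alt(B)$ and $X_2\cong\F_2^B$ still hold. The minimal cover $\tau_{2,X}\colon\tilde{G}_{4,X,F}\twoheadrightarrow\A_{2g}$ has perfect target, so $\tilde{G}_{4,X,F}$ itself is perfect by~\S\ref{cover}; its natural action on $E=\End^0(X)$ factors through $\Aut(E/\Q)$, a group of order at most~$2$, and hence must be trivial. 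Consequently $\End_F^0(X)=E$, and $D:=E\otimes_{\Q}\Q_2$ lies in the centralizer $\End_{G_{2,X,F}}(V_2(X))$.

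Next I would verify that $V_2(X)$ is free over $D$. Since $\fchar(F)=0$ and $X$ is absolutely simple with endomorphism algebra the field $E$, one may embed $F\hookrightarrow\C$ and identify $V_2(X)\cong H_1(X(\C),\Q)\otimes_{\Q}\Q_2$ as $E\otimes\Q_2$-modules; $H_1(X(\C),\Q)$ is an $E$-vector space (of dimension $2g/[E:\Q]=g$), hence $E$-free, so $V_2(X)$ is a free $D$-module of rank $g$. The remaining hypotheses of Theorem~\ref{simpleA2g} are satisfied by the compact subgroup $G:=G_{2,X,F}\subset\Aut_{\Z_2}(T_2(X))$, since $g\ge 5$, $\red(G)=\tilde{G}_{2,X,F}\cong\Alt(B)$, and $T_2(X)/2T_2(X)\cong\F_2^B$. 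Part~(ii) of that theorem then forces $D=\Q_2$, contradicting $\dim_{\Q_2}D=2$; hence $E=\Q$ and $\End(X)=\Z$.

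The main obstacle is the first step: arranging that the hypothetical quadratic field $E$ actually sits inside the Galois centralizer on $V_2(X)$. This is where Remark~\ref{overL}, together with the perfection of the minimal cover, is essential, since $\Aut(E/\Q)$ has order at most~$2$ and so admits no nontrivial map from a perfect group. Once this commutation is arranged, the conclusion is a direct application of Theorem~\ref{simpleA2g}(ii).
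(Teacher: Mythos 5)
Your proposal is correct and follows the paper's proof in its essentials: characteristic zero rules out the supersingular alternative of Theorem \ref{mainAV}, so one only has to exclude a quadratic field $E=\End^0(X)$; one then puts $E\otimes_{\Q}\Q_2$ inside the centralizer of $G_{2,X,F}$ acting on $V_2(X)$, checks that $V_2(X)$ is a free $E\otimes_{\Q}\Q_2$-module, and applies Theorem \ref{simpleA2g}(ii) to force $E\otimes_{\Q}\Q_2=\Q_2$, a contradiction. You differ from the paper only in two auxiliary steps, and both of your variants work. First, to make the $E$-action commute with the $2$-adic Galois image, the paper simply replaces $F$ by the at most quadratic extension trivializing the $\Gal(F)$-action on $E$ (this action factors through $\Aut(E)$, of order $\le 2$) and invokes Lemma \ref{goursat2} to keep $\tilde{G}_{2,X,F}\cong\Alt(B)$; you instead pass to the $2$-balanced field $L$ of Remark \ref{overL} and use the perfection of $\tilde{G}_{4,X,L}$ (the action on $\End^0(X)$ factors through $\tilde{G}_{4,X,L}$ by Silverberg's theorem, as recorded via $\kappa_{X,4}$), which is the same mechanism the paper uses inside Lemma \ref{centerK}; the paper's route is a bit shorter, yours needs no further base change. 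Second, for the freeness of $V_2(X)$ over $E\otimes_{\Q}\Q_2$ the paper cites Ribet \cite{Ribet}, whereas you argue via the comparison with $H_1(X(\C),\Q)$; this is fine in characteristic zero, but note that an arbitrary field $F$ of characteristic zero need not embed into $\C$, so you should first descend $X$ together with its endomorphisms to a finitely generated subfield before choosing an embedding (or just quote Ribet, as the paper does). With that small caveat, your argument is complete.
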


\begin{proof}
 By Theorem \ref{mainAV}, we know that $\End^0(X)$ is either $\Q$ or a
quadratic field. If $\End^0(X)=\Q$ then $\End(X)=\Z$. So, further we assume
that $E:=\End^0(X)$ is a quadratic field. Clearly, $\Aut(E)$ is a cyclic group
of order $2$. Replacing if necessary, $F$ by its suitable quadratic extension
and using Lemma \ref{goursat2}, we may and will assume that $\Gal(F)$ acts
trivially on $\End^0(X)$, i.e., all endomorphisms of $X$ are defined over $F$,
i.e., $E=\End^0_{F}(X)$. Clearly, $E_2:=E\otimes_{\Q}\Q_2$ is a two-dimensional
commutative semisimple $\Q_2$-algebra. It is well-known that there is a natural
embedding
$$E_2 \hookrightarrow \End_{\Gal(F)}V_2(X) \subset \End_{\Q_2}(V_2(X)).$$
This implies that $E_2$ sits in the centralizer of $G_{2,X,F}$. It is also
known that the $E_2$-module $V_2(X)$ is free  \cite[Theorem 2.1.1]{Ribet}.
However, applying Theorem \ref{simpleA2g} to $V=V_2(X), T=T_2(X), G=G_{2,X,F}$
 and $D=E_2$, we conclude that $E_2=\Q_2$, which could not be the
case, since $\Q_2$ is the one-dimensional $\Q_2$-algebra.
\end{proof}

\begin{thm}
  \label{nonisog}
  Suppose that $X$ is as in Theorem \ref{mainAV}.   Suppose that $Y$ is a $g$-dimensional abelian variety over $F$ that enjoys
  one of the following properties:

   \begin{itemize}
   \item[(i)]
   $\tilde{G}_{2,Y,F}$ is solvable;
   \item[(ii)]
   The fields $F(X_2)$ and $F(Y_2)$ are linearly disjoint over $F$.
   \end{itemize}

   If $\fchar(F)=0$ then $X$ and $Y$ are not isogenous over $\bar{F}$.
  \end{thm}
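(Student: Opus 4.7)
The plan is to reduce case (i) to case (ii) and then derive a contradiction from the assumption of a $\bar{F}$-isogeny via a descent argument combined with a mod-$2$ analysis. For the reduction: since $\tilde{G}_{2,Y,F}$ is solvable, $F(Y_2)/F$ is a solvable Galois extension, so by Lemma \ref{goursat2} applied to the simple non-abelian $\tilde{G}_{2,X,F}\cong\A_{2g}$ we have $\tilde{G}_{2,X,F(Y_2)}=\A_{2g}$ still acting on $X_2\cong\F_2^B$; trivially $F(Y_2)(X_2)$ and $F(Y_2)(Y_2)=F(Y_2)$ are linearly disjoint over $F(Y_2)$, so we may pass to $F(Y_2)$ and reduce to case (ii). Suppose for contradiction that $X$ and $Y$ are isogenous over $\bar{F}$. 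Then by Theorem \ref{mainAV} together with $\fchar(F)=0$ (which rules out supersingularity), $E_0:=\End^0(X)$ is either $\Q$ or an imaginary quadratic field, and $\End^0(Y)\cong E_0$ as well.

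Let $u\colon X_{\bar{F}}\to Y_{\bar{F}}$ be an isogeny. Setting $w_\sigma:=u^{-1}\sigma(u)$ defines a continuous $1$-cocycle $w\colon\Gal(F)\to E_0^{*}$ with respect to the natural $\Gal(F)$-action on $E_0$. If $E_0$ is imaginary quadratic, this action factors through $\Aut(E_0/\Q)\cong\Z/2\Z$; after replacing $F$ by the corresponding (at most quadratic) fixed subfield and appealing to Lemma \ref{goursat2}, all hypotheses persist and $w$ becomes a continuous homomorphism into $E_0^{*}$, with image a finite subgroup of $E_0^{*}$, hence cyclic of bounded order dividing $6$. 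Let $F'$ be its fixed field: a cyclic (hence solvable) extension of $F$ over which $u$ is now defined, and by Lemma \ref{goursat2} one still has $\tilde{G}_{2,X,F'}=\A_{2g}$. To see that case (ii) persists over $F'$, note that $L_0:=F'(X_2)\cap F'(Y_2)$ is Galois over $F'$ with Galois group a quotient of $\tilde{G}_{2,X,F'}=\A_{2g}$, so $L_0=F'$ or $L_0=F'(X_2)$; in the latter case one has $F(X_2)F(Y_2)\subset F'F(Y_2)$, whence the simple non-abelian group $\A_{2g}=\Gal(F(X_2)F(Y_2)/F(Y_2))$ would embed into the abelian quotient $\Gal(F'F(Y_2)/F(Y_2))$ of $\Gal(F'/F)$, a contradiction. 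Hence $L_0=F'$ as required.

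Finally, the $F'$-isogeny $u$ induces an isomorphism $V_2(X)\cong V_2(Y)$ of $\Gal(F')$-modules (Remark \ref{isogimage}), so the two $2$-adic Galois representations have the same kernel, giving $L:=F'(X_{2^\infty})=F'(Y_{2^\infty})$. Put $G:=\Gal(L/F')$ and let $K_X$ and $K_Y$ be the closed normal subgroups of $G$ fixing $F'(X_2)$ and $F'(Y_2)$, respectively; both are pro-$2$, since the kernel of each reduction $\Aut(X_{2^{n+1}})\to\Aut(X_{2^n})$ is a $2$-group. Linear disjointness of $F'(X_2)$ and $F'(Y_2)$ over $F'$ is equivalent to $K_XK_Y=G$; but $K_XK_Y/K_X\cong K_Y/(K_X\cap K_Y)$ is a pro-$2$ group sitting as a normal subgroup of $G/K_X=\A_{2g}$, and $\A_{2g}$, being simple non-abelian of order not a power of $2$, admits no non-trivial normal $2$-subgroup. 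Thus $K_XK_Y=K_X\neq G$, contradicting linear disjointness. The main subtlety throughout is ensuring that the iterated solvable base changes (reducing (i) to (ii), trivializing the Galois action on $E_0$, and descending $u$ via $w$) all preserve both $\tilde{G}_{2,X}\cong\A_{2g}$ and the linear disjointness of the $2$-torsion fields; this is handled uniformly by Lemma \ref{goursat2} and the simplicity of $\A_{2g}$.
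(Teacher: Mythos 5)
Your overall route is the paper's: reduce case (i) to case (ii) using solvability against the simple non-abelian $\tilde{G}_{2,X,F}\cong\A_{2g}$, invoke Theorem \ref{mainAV} to know $\End^0(X)$ is $\Q$ or a quadratic field, descend the $\bar{F}$-isogeny to a cyclic extension $F'$ via the cocycle $\sigma\mapsto u^{-1}\sigma(u)$ (a homomorphism with finite cyclic image once the Galois action on $\End^0(X)$ is trivialized), and then get a contradiction from the $2$-adic images; your $K_X,K_Y$ bookkeeping ($K_Y$ pro-$2$, $G/K_X\cong\A_{2g}$, hence $K_Y\subseteq K_X$) is just a reformulation of the paper's ``a pro-$2$ group cannot be continuously isomorphic to a group surjecting onto $\A_{2g}$'', and that part is sound.

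There is, however, one step whose justification does not go through as written: the persistence of the case (ii) hypothesis under your base changes. Lemma \ref{goursat2} preserves $\tilde{G}_{2,X}$ under solvable extensions, but it says nothing about the linear disjointness of the two $2$-torsion fields over the enlarged base, so ``all hypotheses persist'' at the at-most-quadratic extension trivializing the action on $\End^0(X)$ is unproved. Your later verification that case (ii) persists over $F'$ then leans on exactly this: the identification $\Gal(F(X_2)F(Y_2)/F(Y_2))\cong\A_{2g}$ requires $F(X_2)\cap F(Y_2)=F$ for the field over which $F'$ is cyclic, i.e.\ for the already-extended $F$, which you have not established; if instead $F$ denotes the original field, then $F'/F$ is only solvable (cyclic over quadratic, possibly over $F(Y_2)$), not necessarily Galois or abelian, so ``$\Gal(F'F(Y_2)/F(Y_2))$ is an abelian quotient of $\Gal(F'/F)$'' fails as stated; also the relevant map is the restriction surjection of $\Gal(F'F(Y_2)/F(Y_2))$ onto $\A_{2g}$, not an embedding of $\A_{2g}$ into it. The repair is short: run your intersection argument stage by stage (first for the quadratic extension over the original $F$, where disjointness is the hypothesis, then for $F'$ over that field), or, simpler and as in the paper, begin by replacing $F$ by $F(Y_2)$ in both cases, so that $Y_2$ is rational over the base; then $G_{2,Y}$ is pro-$2$ over every further extension (pro-$2$-ness passes to subgroups automatically) and no disjointness needs to be tracked at all. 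Two further slips are immaterial: $\End^0(X)$ need not be imaginary quadratic, and the finite image of the cocycle need not have order dividing $6$ (it can have order $4$ in $\Q(i)$); only its cyclicity is used.
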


  \begin{proof}[Proof of Theorem \ref{nonisog}]
  If $\tilde{G}_{2,Y,F}$ is solvable then it follows from Lemma \ref{goursat}
that $F(X_2)$ and $F(Y_2)$ are linearly disjoint over $F$, since
$\tilde{G}_{2,X,F}=\Alt(B)\cong\A_{2g}$ is simple non-abelian. So, $F(X_2)$ and
$F(Y_2)$ are linearly disjoint over $F$. Since the compositum of $F(X_2)$ and
$F(Y_2)$ coincides with $F(Y_2)(X_2)$, we conclude that
$$\tilde{G}_{2,X,F}=\tilde{G}_{2,X,F(Y_2)}.$$
  Replacing the ground field $F$ by $F(Y_2)$, we may and will assume that  $\tilde{G}_{2,Y,F}=\{1\}$, i.e.,
   the Galois module $Y_2$ is trivial. It follows that
   $G_{2,Y,F} \subset \I +2 \End_{\Z_2}(T_2(Y))$; in particular, $G_{2,Y,F}$ is
   a pro-$2$-group.
By Theorem  \ref{mainAV}, $\End^0(X)$ is either $\Q$ or a quadratic field say,
$L$. In the former case all the endomorphisms of $X$ are defined over $F$. In
the latter case, all the endomorphisms of $X$ are defined either over $F$ or
over a certain quadratic extension of $F$, because the automorphism group of
$L$ is the cyclic group of order $2$. Replacing if necessary $F$ by the
corresponding quadratic extension, we may and will assume
    that all the endomorphisms of $X$ are defined over $F$.
    In particular, all the automorphisms of $X$ are defined over
    $F$.

   There is still a continuous surjective
   homomorphism $$G_{2,X,F}\twoheadrightarrow \tilde{G}_{2,X,F}=\Alt(B) \cong \A_{2g}$$
   and therefore $G_{2,X,F}$ is {\sl not}  a pro-$2$-group. This implies that
   there  does {\sl not} exist a continuous isomorphism between $G_{2,X,F}$
   and $G_{2,Y,F}$. It follows from Remark \ref{isogimage} that $X$ and $Y$ are
   {\sl not} isogenous over $F$.

     Let $u:X \to Y$ be an $\bar{F}$-isogeny of abelian varieties.  As we have seen, $u$ could {\sl not} be defined over $F$.
     However, there exists a finite Galois extension $F_u/F$ such that $u$
    is defined over $F_u$.

    Let us consider the $1$-cocycle
    $$c:\Gal(F_u/F)\to \End^0(X)^{*}, \ \sigma \mapsto c_{\sigma}:= u^{-1} \sigma(u).$$
    Since the Galois group acts trivially on $\End^0(X)$, the map $c: \Gal(F_u/F) \to \End^0(X)^{*}$ is a group homomorphism,
    whose image is a finite subgroup of $\End^0(X)^{*}$ and therefore is cyclic, thanks to Theorem  \ref{mainAV}(i).
    Therefore there is a finite cyclic subextension $F^{\prime}/F$ such that
    $$c_{\sigma}=1 \ \forall \sigma \in \Gal({F_u}/F^{\prime})\subset \Gal(F_u/F).$$
    It follows that $u$ is defined over $F^{\prime}$ and therefore the  $\Gal(F^{\prime})$-modules $V_2(X)$ and $V_2(Y)$ are isomorphic.
    In particular, there is a continuous group isomorphism between $G_{2,X,F^{\prime}}$
   and $G_{2,Y,F^{\prime}}$. But $G_{2,X,F^{\prime}}\subset G_{2,X,F}$ is still a
   pro-$2$-group. On the other hand, since $F^{\prime}/F$ is cyclic and
   $\Gal(F(X_2)/F)=\tilde{G}_{2,X,F}\cong \A_{2g}$ is simple non-abelian, Lemma
   \ref{goursat2} tells us that
   $$\tilde{G}_{2,X,F^{\prime}}=\tilde{G}_{2,X,F}\cong \A_{2g}$$
   and there is a surjective continuous homomorphism
   $$G_{2,X,F^{\prime}}\twoheadrightarrow \tilde{G}_{2,X,F^{\prime}}\cong \A_{2g}.$$
   In particular, $G_{2,X,F^{\prime}}$ is {\sl not}  a pro-$2$-group.
   there does not exist a continuous group isomorphism
     \end{proof}

\section{Points of order 2}
\label{hyper2}
\begin{sect}
\label{heart} Let $K$ be a field of characteristic different from $2$, let
$f(x)\in K[x]$ be a polynomial of {\sl odd} degree $n\ge 5$ and without
multiple roots. Let $C_f$ be the hyperelliptic curve $y^2=f(x)$ and $J(C_f)$
the jacobian of $C_f$.
  The Galois module $J(C_f)_2$ of points of order $2$ admits the following
  description.

  Let $\F_2^{\RR_f}$ be the $n$-dimensional $\F_2$-vector space of functions $\varphi: \RR_f \to \F_2$
   provided with the
   natural structure of $\Gal(f)\subset \Perm(\RR_f)$-module. The canonical surjection
    $$\Gal(K)\twoheadrightarrow \Gal(K(\RR_f)/K)=\Gal(f)$$
    provides $\F_2^{\RR_f}$ with the structure of $\Gal(K)$-module. Let us
    consider the hyperplane
    $$(\F_2^{\RR_f})^0:=\{\varphi:\RR_f \to \F_2\mid \sum_{\alpha\in \RR_f}\varphi(\alpha)=0\}\subset \F_2^{\RR_f}.$$
    Clearly, $(\F_2^{\RR_f})^0$ is a Galois submodule in $\F_2^{\RR_f}$.

    It is well-known (see, for instance, \cite{ZarhinTexel}) that if $n$ is odd
    then the Galois modules $J(C_f)_2$ and
$(\F_2^{\RR_f})^0$ are isomorphic. It follows that if $X=J(C_f)$ then
$\tilde{G}_{2,X,K}=\Gal(f)$ and $K(J(C_f)_2)=K(\RR_f)$.
\end{sect}

\begin{lem}
\label{order2} Suppose that $n=\deg(f)$ is odd and $f(x)=(x-t)h(x)$ with $t\in
K$ and $h(x)\in K[x]$. Then $\tilde{G}_{2,J(C_f),K}\cong \Gal(h)$ and the
Galois modules $J(C_f)_2$ and $\F_2^{\RR_h}$ are isomorphic.
\end{lem}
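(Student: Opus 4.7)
The plan is to leverage the description of $J(C_f)_2$ from Section \ref{heart} combined with the factorization $f(x)=(x-t)h(x)$, reducing the problem to a purely combinatorial/Galois-theoretic statement about function spaces on the root set.

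First I would invoke Section \ref{heart}: since $n=\deg(f)$ is odd, the Galois modules $J(C_f)_2$ and $(\F_2^{\RR_f})^0$ are isomorphic, and $\tilde{G}_{2,J(C_f),K}=\Gal(f)$. Then by Remark \ref{oneroot}, $\Gal(f)=\Gal(h)$, which immediately yields the first assertion $\tilde{G}_{2,J(C_f),K}\cong \Gal(h)$.

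For the second assertion, I would produce an explicit isomorphism $(\F_2^{\RR_f})^0 \cong \F_2^{\RR_h}$ of Galois modules. The key observation is that since $t\in K$, it is fixed by $\Gal(K)$, so the decomposition $\RR_f=\{t\}\sqcup \RR_h$ is $\Gal(K)$-equivariant and yields a direct sum decomposition of $\Gal(K)$-modules
$$\F_2^{\RR_f} \;=\; \F_2^{\{t\}} \oplus \F_2^{\RR_h}.$$
Under this decomposition a function $\varphi\in \F_2^{\RR_f}$ corresponds to the pair $(\varphi(t),\varphi|_{\RR_h})$, and the defining condition $\sum_{\alpha\in \RR_f}\varphi(\alpha)=0$ becomes $\varphi(t)=\sum_{\beta\in \RR_h}\varphi(\beta)$. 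Thus the restriction map
$$(\F_2^{\RR_f})^0 \longrightarrow \F_2^{\RR_h},\qquad \varphi\mapsto \varphi|_{\RR_h},$$
is a bijection (its inverse extends $\psi\in \F_2^{\RR_h}$ to $\RR_f$ by setting the value at $t$ equal to $\sum_{\beta\in \RR_h}\psi(\beta)$). Both the map and its inverse are manifestly $\Gal(K)$-equivariant because $\Gal(K)$ preserves $\RR_h$ and fixes $t$, so the sum $\sum_{\beta\in \RR_h}\psi(\beta)$ is $\Gal(K)$-invariant as an element of $\F_2$.

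There is no real obstacle here: the statement is essentially bookkeeping, reducing the odd-degree description of Section \ref{heart} to the even-degree root set $\RR_h$ via the splitting off of the rational root $t$. The only point requiring care is ensuring the isomorphism is Galois-equivariant, which follows from $t\in K$.
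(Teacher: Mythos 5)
Your proof is correct and follows essentially the same route as the paper: it cites the odd-degree description $J(C_f)_2\cong(\F_2^{\RR_f})^0$ with $\tilde{G}_{2,J(C_f),K}=\Gal(f)=\Gal(h)$, and then uses the restriction map $(\F_2^{\RR_f})^0\to\F_2^{\RR_h}$ coming from the Galois-stable decomposition $\RR_f=\{t\}\sqcup\RR_h$. The extra details you supply (the explicit inverse and the equivariance check using $t\in K$) are exactly what the paper leaves as ``obvious.''
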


\begin{proof}
We have
$$\tilde{G}_{2,X,K}=\Gal(f)=\Gal(h).$$
 In order to prove the second assertion, it suffices to check that the Galois modules $(\F_2^{\RR_f})^0$ and
$\F_2^{\RR_h}$ are isomorphic. Recall that $\RR_{f}$ is the disjoint union of
$\RR_{h}$ and $\{t\}$. Consider the map
 $(\F_2^{\RR_f})^0\to  \F_2^{\RR_h}$ that sends the function $\varphi: \RR_f \to \F_2$
 to its restriction to $\RR_h$. Obviously, this map is an isomorphism of Galois modules.
\end{proof}

\begin{cor}
\label{helpcor} Suppose that $\fchar(K)=0$, $n=\deg(f)=2g+1$ is odd  and
$f(x)=(x-t)h(x)$ with $t\in K$ and $h(x)\in K[x]$. Assume also that
$\Gal(h)=\Alt(\RR_h)\cong \A_{2g}$.

\begin{itemize}
\item[(i)] If $g \ge 4$ then $\End^0(J(C_f))$ is either $\Q$ or a quadratic
field.
 \item[(i)]
If $g \ge 5$ then $\End(J(C_f))=\Z$.
\end{itemize}
\end{cor}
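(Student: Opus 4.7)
The plan is to recognize that Corollary \ref{helpcor} is essentially a direct translation of the abstract results of Sections \ref{AV} and \ref{tate} into the hyperelliptic setting, using the explicit description of the mod $2$ Galois module from Section \ref{hyper2}.

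First I would set $X=J(C_f)$ and note that since $\deg(f)=2g+1$, the curve $C_f$ has genus $g$, so $\dim(X)=g$. Let $B:=\RR_h$, which is a $2g$-element set. By Lemma \ref{order2}, we have $\tilde{G}_{2,X,K}\cong \Gal(h)$, and by hypothesis $\Gal(h)=\Alt(\RR_h)=\Alt(B)\cong \A_{2g}$. Moreover, that same lemma identifies $X_2$ with $\F_2^{\RR_h}=\F_2^B$ as a Galois module, i.e., as an $\Alt(B)$-module. So all the hypotheses required to invoke the general endomorphism results are met.

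For part (i), where $g\ge 4$, I would apply Theorem \ref{mainAV} with $F=K$. That theorem yields a dichotomy: either $\End^0(X)$ is $\Q$ or a quadratic imaginary field (and $X$ is absolutely simple), or $\fchar(F)>0$ and $X$ is supersingular. Since $\fchar(K)=0$ by assumption, the supersingular alternative is excluded, and the first alternative is exactly the conclusion. For part (ii), where $g\ge 5$, I would apply Theorem \ref{mainEND} directly (whose hypotheses include characteristic zero and $g\ge 5$), obtaining $\End(X)=\Z$.

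There is essentially no obstacle here: all the work has already been done in the earlier sections. The only things to check carefully are the numerology (that $\deg(f)=2g+1$ really makes the genus equal to $g$, so that the integer $g$ in the hypotheses of Theorems \ref{mainAV} and \ref{mainEND} matches the one used here and $2g=\#\RR_h$), and that $\Gal(h)$ acts on $\RR_h$ as the full alternating group $\Alt(\RR_h)$, so that the isomorphism $\tilde{G}_{2,X,K}\cong \Alt(B)$ from Lemma \ref{order2} indeed intertwines the action on $X_2$ with the natural action on $\F_2^B$. Once this compatibility is recorded, parts (i) and (ii) follow in one line each.
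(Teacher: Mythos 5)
Your argument is correct and is essentially identical to the paper's own proof: set $X=J(C_f)$, $B=\RR_h$, invoke Lemma \ref{order2} to identify $\tilde{G}_{2,X,K}$ with $\Alt(B)$ acting on $X_2\cong\F_2^B$, then apply Theorem \ref{mainAV} (with the characteristic-zero hypothesis ruling out the supersingular alternative) for (i) and Theorem \ref{mainEND} for (ii). One minor slip: Theorem \ref{mainAV} asserts only that $\End^0(X)$ is $\Q$ or a quadratic field, not necessarily an imaginary quadratic field, but this does not affect the deduction.
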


\begin{proof}[Proof of Corollary \ref{helpcor}]
Let us put $K=F$, $X=J(C_f)$ and $B=\RR_h$.
 Then assertion (i) is an immediate corollary of Lemma \ref{order2}
and Theorem \ref{mainAV}. The assertion (ii) follows from Theorem
\ref{mainEND}.

\end{proof}

\begin{proof}[Proof of Theorem \ref{odd} and Theorem \ref{oddZ}]
Replacing if necessary, $K$ by its suitable quadratic extension, we may and
will assume that $\Gal(h)=\A_{2g}$ (recall that $n=2g+1$). Now  Theorem
\ref{odd} is an immediate corollary of Corollary \ref{helpcor}(i).
Theorem \ref{oddZ} follows from Corollary \ref{helpcor}(ii).
\end{proof}

\begin{thm}
 Suppose that $\fchar(K)=0$, $n=\deg(f)\ge 9$ is odd and $f(x)=(x-t)h(x)$ with $t \in K$
and $h(x) \in K[x]$.  Assume also that $\Gal(h)$ is either $\ST_{n-1}$ or
$\A_{n-1}$. Suppose that $f_1(x) \in K[x]$ is a degree $n$ polynomial without
multiple roots  that enjoys one of the following properties:

\begin{itemize}
\item[(i)]
$f_1(x)$ splits into a product of linear factors over $K$.

\item[(ii)] $f_1(x)=(x-t_1)h_1(x)$ with $t_1 \in K$ and $h_1(x) \in K[x]$. In
addition, the splitting fields of $h(x)$ and $h_1(x)$ are linearly disjoint
over $K$.

\item[(iii)] The splitting fields of $h(x)$ and $f_1(x)$ are linearly disjoint
over $K$.
\end{itemize}

Then the jacobians $J(C_f)$ and $J(C_{f_1})$ are not isogenous over $\bar{K}$.
\end{thm}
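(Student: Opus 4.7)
The plan is to apply Theorem \ref{nonisog} with $X=J(C_f)$ and $Y=J(C_{f_1})$, after first passing to an algebraic extension $K'$ of $K$ (which does not affect isogeny over $\bar{K}=\bar{K'}$) to place $X$ within the framework of Theorem \ref{mainAV}. I would set $K'=K$ if $\Gal(h)=\A_{n-1}$ and otherwise let $K'$ be the unique quadratic subextension of $K(\RR_h)/K$ corresponding to the normal subgroup $\A_{n-1}\subset \ST_{n-1}$. Over $K'$ one has $\Gal(h)=\A_{n-1}$, and since $t\in K\subset K'$, $n=2g+1$ is odd, and $g=(n-1)/2\ge 4$, Lemma \ref{order2} would yield $\tilde{G}_{2,X,K'}\cong \A_{n-1}=\Alt(\RR_h)$ together with an $\Alt(\RR_h)$-module isomorphism $X_2\cong \F_2^{\RR_h}$. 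Thus $X$ satisfies the hypotheses of Theorem \ref{mainAV} over $K'$, with $B=\RR_h$.

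I would then verify, in each of the three cases, one of the alternative hypotheses of Theorem \ref{nonisog} over $K'$. For case (i), $f_1$ splits over $K\subset K'$, so the Galois module $Y_2\cong(\F_2^{\RR_{f_1}})^0$ is trivial; in particular $\tilde{G}_{2,Y,K'}=\{1\}$ is solvable, and hypothesis (i) of Theorem \ref{nonisog} applies directly. For cases (ii) and (iii) I would verify hypothesis (ii) of Theorem \ref{nonisog}, namely the linear disjointness of $K'(X_2)$ and $K'(Y_2)$ over $K'$. By Section \ref{heart} and Lemma \ref{order2}, $K'(X_2)=K(\RR_h)$ and $K'(Y_2)=K'\cdot L$, where $L=K(\RR_{h_1})$ in case (ii) (using $t_1\in K$) or $L=K(\RR_{f_1})$ in case (iii). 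In both cases the hypothesis of the theorem asserts precisely that $K(\RR_h)$ and $L$ are linearly disjoint over $K$.

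The main (and essentially only substantive) technical point will be promoting linear disjointness from $K$ to $K'$ in cases (ii) and (iii). Since $K'\subset K(\RR_h)$, one has $K(\RR_h)\cdot K'L=K(\RR_h)\cdot L$. Linear disjointness of $K(\RR_h)$ and $L$ over $K$ gives both $L\cap K'\subset L\cap K(\RR_h)=K$, whence $[K'L:K']=[L:K]$, and $[K(\RR_h)\cdot L:K]=[K(\RR_h):K]\cdot[L:K]$. Multiplicativity of degrees in the tower $K\subset K'\subset K(\RR_h)\cdot L$ then yields $[K(\RR_h)\cdot K'L:K']=[K(\RR_h):K']\cdot[K'L:K']$, i.e., linear disjointness of $K(\RR_h)$ and $K'L$ over $K'$. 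Once this bookkeeping is complete, Theorem \ref{nonisog} immediately delivers non-isogeny of $X$ and $Y$ over $\bar{K'}=\bar{K}$.
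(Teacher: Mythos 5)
Your proposal is correct and takes essentially the same route as the paper: use Section \ref{hyper2} and Lemma \ref{order2} to identify the fields of definition of the $2$-torsion with the relevant splitting fields, reduce all three cases to a linear-disjointness statement, and conclude by Theorem \ref{nonisog}. The only difference is that you make explicit the passage to the quadratic subextension $K'$ (needed when $\Gal(h)=\ST_{n-1}$, so that $\tilde{G}_{2,X,K'}$ becomes $\Alt(\RR_h)$ as Theorem \ref{mainAV} requires) and verify that linear disjointness persists over $K'$ — a bookkeeping step the paper leaves implicit, and your degree-count argument for it is correct.
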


\begin{proof}
It suffices to do the case when the splitting fields of $f(x)$ and $f_1(x)$ are
linearly disjoint over $K$. (This condition is obviously fulfilled in the cases
(i) and (ii).) Let us put $X=J(C_f), \ Y=J(C_{f_1})$. According to Sect.
\ref{heart},
$$K(J(C_f)_2)=K(\RR_f), \ K(J(C_{f_1})_2)=K(\RR_{f_1}).$$
Now the result follows from Theorem \ref{nonisog} combined with Lemma
\ref{order2}.
\end{proof}

\begin{thm}
Suppose that $\fchar(K)=0$, $n=2g+2=\deg(f)\ge 10$ is even and
$f(x)=(x-t_1)(x-t_2)u(x)$ with
$$t_1, t_2 \in K, \ t_1 \ne t_2, \ u(x)\in K[x], \ \deg(u)=n-2.$$
If $\Gal(u)=\ST_{n-2}$ or $\A_{n-2}$ then $\End^0(J(C_f))$ is either $\Q$ or a
 quadratic field; in particular, $J(C_f)$ is an absolutely simple
abelian variety. In addition, if $n\ge 12$ then $\End(J(C_f))=\Z$.
\end{thm}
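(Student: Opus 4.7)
The plan is to imitate the reduction carried out in the proof of Theorem \ref{even}: apply a Möbius change of variables to convert $C_f$ into a $K$-birationally isomorphic hyperelliptic curve of \emph{odd} degree, and then invoke Theorems \ref{odd} and \ref{oddZ}. Concretely, I would make the substitution
$$x_1 = \frac{1}{x-t_1}, \qquad y_1 = \frac{y}{(x-t_1)^{g+1}}$$
(legal over $K$ since $n=2g+2$ is even). A short calculation turns $y^2 = (x-t_1)(x-t_2)u(x)$ into $y_1^2 = F(x_1)$, where
$$F(x_1) = \bigl(1-(t_2-t_1)x_1\bigr)\,\tilde u(x_1),\qquad \tilde u(x_1) := x_1^{n-2}\, u(t_1 + 1/x_1)\in K[x_1].$$
Since $u(t_1)\ne 0$ (as $f$ has no multiple roots), $\tilde u$ has degree $n-2$, so $F$ has odd degree $n-1$. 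Because the two equations are related by a $K$-birational isomorphism of hyperelliptic curves, the jacobians $J(C_f)$ and $J(C_F)$ are $K$-isomorphic and share the same endomorphism ring.

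Next I would extract the distinguished $K$-rational root $s:=1/(t_2-t_1)\in K$ and write
$$F(x_1) = (x_1 - s)\,H(x_1),\qquad H(x_1) := -(t_2-t_1)\,\tilde u(x_1),$$
so that $\deg(H) = n-2 = (n-1)-1$. The bijection $\alpha \mapsto 1/(\alpha - t_1)$ between $\RR_u$ and $\RR_{\tilde u}$ is $K$-algebraic and Galois-equivariant, which gives $K(\RR_H) = K(\RR_{\tilde u}) = K(\RR_u)$ and $\Gal(H) = \Gal(\tilde u) = \Gal(u)$ as permutation groups; hence $\Gal(H)$ is $\ST_{n-2}$ or $\A_{n-2}$ by hypothesis. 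One also checks easily that $F$ has no multiple roots, using $t_2\notin \RR_u$.

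For the first assertion, $n\ge 10$ gives $\deg(F) = n-1\ge 9$, so Theorem \ref{odd} applies to $F$ (with its ``$n$'' equal to $n-1$, its ``$t$'' equal to $s$, and its ``$h$'' equal to $H$). Since $\fchar(K)=0$, the supersingular alternative is ruled out, so $\End^0(J(C_F))$ is $\Q$ or a quadratic field; the same therefore holds for $\End^0(J(C_f))$, whence $J(C_f)$ is absolutely simple. For the sharper assertion when $n\ge 12$, we have $\deg(F) = n-1\ge 11$, and Theorem \ref{oddZ} gives $\End(J(C_F)) = \Z$, so $\End(J(C_f))=\Z$.

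I do not foresee any serious obstacle: the whole argument is the natural two-root analogue of the one-root reduction in Theorem \ref{even}. The only point requiring care is verifying that $\Gal(H) = \Gal(u)$ after the Möbius substitution, which is immediate from the bijection of root sets above.
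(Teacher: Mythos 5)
Your proposal is correct and follows essentially the same route as the paper: the paper also sets $h(x)=(x-t_2)u(x)$, applies the substitution $x_1=1/(x-t_1)$, $y_1=y/(x-t_1)^{g+1}$ to pass to the odd-degree model $y_1^2=h_2(x_1)$ with $h_2(x)=\bigl(x-\frac{1}{t_2-t_1}\bigr)v(x)$ and $\Gal(v)=\Gal(u)$, and then invokes Theorems \ref{odd} and \ref{oddZ}. Your $F$ and distinguished root $s=1/(t_2-t_1)$ coincide with the paper's $h_2$ and its $K$-rational root, so there is nothing to add.
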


\begin{proof}
Let us put $h(x)=(x-t_2)u(x)$. We have $f(x)=(x-t_1)h(x)$. As in the proof of
Theorem \ref{even}, let us consider the degree $(n-1)$ polynomials

$$h_1(x)=h(x+t_1)=(x+t_1-t_2)u(x+t_1), \ h_2(x)=x^{n-1} h_1(1/x) \in K[x].$$
 We have
$$\RR_{h_1}=\{\alpha-t_1\mid \alpha \in \RR_{h}\}=
\{\alpha-t_1+t_2\mid \alpha \in \RR_{u}\} \bigcup \left\{t_2-t_1\right\},$$
$$\RR_{h_2}=\left\{\frac{1}{\alpha-t_1}\mid \alpha \in \RR_u\right\}\bigcup
\left\{\frac{1}{t_2-t_1}\right\}.$$ This implies that
$$K(\RR_{h_2})=K(\RR_{h_1})=K(\RR_{u})$$
and
$$h_2(x)=\left(x-\frac{1}{t_2-t_1}\right)v(x)$$
where $v(x)\in K[x]$ is a degree $(n-2)$ polynomial with
$K(\RR_{v})=K(\RR_{u})$; in particular, $\Gal(v)=\Gal(u)=\ST_{n-2}$ or
$\A_{n-2}$. Again,  the standard substitution
$$x_1=1/(x-t_1), \ y_1=y/(x-t_1)^{g+1}$$
establishes a birational $K$-isomorphism between $C_f$ and a hyperelliptic
curve
$$C_{h_2}: y_1^2=h_2(x_1).$$
Now the result follows from Theorems \ref{odd} and \ref{oddZ} applied to
$h_2(x_1)$.
\end{proof}

\section{Families of hyperelliptic curves}
\label{nonconstant}

 Throughout this Section,  $K$ is a field of characteristic
different from $2$,
 $\bar{K}$  its algebraic closure and $\Gal(K)=\Aut(\bar{K}/K)$ its absolute
Galois group. Let $n\ge 5$ be an integer, $f(x)\in K[x]$  a degree $n$
polynomial {\sl without multiple roots}, $\RR_f \subset \bar{K}$ the
$n$-element set of its roots, $K(\RR_f) \subset \bar{K}$ the splitting field of
$f(x)$ and $\Gal(f)=\Gal(K(\RR_f)/K)$ the Galois group of $f(x)$ over $K$. One
may view $\Gal(f)$ as a certain group of permutations of $\RR_f$. Let $C_f:
y^2=f(x)$ the corresponding hyperelliptic curve of genus
$\lfloor(n-1)/2\rfloor$.

\begin{thm}
\label{curveshyper} Let $n\ge 7$ be an integer,  $h(x)\in K[x]$ an irreducible
polynomial of degree $n-1$, whose Galois group is either $\ST_{n-1}$ or
$\A_{n-1}$. For every $t\in K$ let $f_t(x)=(x-t)h(x)$ and $D(t)=C_{f_t}$. Then
there exists a finite set $B=B(h)$ that enjoys the following properties.

If $t_1$ and $t_2$ are distinct elements of $K$ such that the hyperelliptic
curves $D(t_1)$ and $D(t_2)$ are isomorphic over $\bar{K}$ then both $t_1$ and
$t_2$ belong to $B$.
\end{thm}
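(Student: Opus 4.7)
\emph{Proof plan.} The approach is to translate $\bar{K}$-isomorphism of hyperelliptic curves into $\PGL_2(\bar{K})$-equivalence of their Weierstrass loci on $\P^1$, and then to exploit the largeness of $\Gal(h)$ to cut the possible $t$'s down to a finite set. Recall the classical fact that for hyperelliptic curves of genus at least $2$, $D(t_1)\cong_{\bar{K}}D(t_2)$ if and only if some $\gamma\in\PGL_2(\bar{K})$ carries the Weierstrass set $W_{t_1}\subset\P^1(\bar{K})$ onto $W_{t_2}$. Here $W_t=\{t\}\cup\RR_h$ if $n$ is even and $W_t=\{t,\infty\}\cup\RR_h$ if $n$ is odd, so $|W_t\setminus\RR_h|\le 2$. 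Since $\gamma(\RR_h)\subset W_{t_2}$ and $|\gamma(\RR_h)|=n-1$, this forces $|\gamma(\RR_h)\cap\RR_h|\ge(n-1)-2=n-3\ge 4$. A M\"obius transformation is determined by its values at any three points, so the set
$$\Gamma'':=\{\gamma\in\PGL_2(\bar{K})\;:\;|\gamma(\RR_h)\cap\RR_h|\ge 3\}$$
is finite, and necessarily $\gamma\in\Gamma''$.

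Write $S:=W_t\setminus\{t\}$ (this set is independent of $t$). Next I would bound, for each $\gamma\in\Gamma''$, the possible $t_1$'s. Note that $t_1\notin S$: since $h$ is irreducible of degree $\ge 2$ we have $\RR_h\cap K=\emptyset$, and $\infty\notin K$. If $\gamma(S)\ne S$, a direct bijection argument (using $t_2\notin S$) shows that $t_1=\gamma^{-1}(\beta)$, where $\{\beta\}=S\setminus\gamma(S)$ is forced by cardinality; thus $t_1$ is uniquely determined by $\gamma$, and this case contributes at most $|\Gamma''|$ values of $t_1$.

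In the remaining case $\gamma$ stabilizes $S$, so it lies in the finite group $\Gamma:=\mathrm{Stab}_{\PGL_2(\bar{K})}(S)$, and necessarily $\gamma(t_1)=t_2$. The key step is a Galois-theoretic calculation showing $\Gamma\cap\PGL_2(K)=\{\mathrm{id}\}$. The natural restriction $\Gamma\hookrightarrow\Perm(S)$ is injective because a nontrivial M\"obius transformation has at most two fixed points and $|S|\ge n-1\ge 6$. Any $\gamma\in\Gamma\cap\PGL_2(K)$ induces a permutation of $S$ commuting with the image of $\Gal(K)$; this image is $\Gal(h)\cong\A_{n-1}$ or $\ST_{n-1}$, which acts transitively on $\RR_h$ and (when $n$ is odd) fixes $\infty$. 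Hence the centralizing permutation must preserve the unique $\Gal(h)$-fixed point $\infty$ in the odd case, so in both cases it lies in $\Perm(\RR_h)$. The centralizer of $\A_{n-1}$ inside $\ST_{n-1}$ is trivial for $n-1\ge 4$, so $\gamma=\mathrm{id}$.

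Once this is established, for each $\gamma\in\Gamma\setminus\{\mathrm{id}\}$ I can choose $\sigma\in\Gal(K)$ with $\sigma(\gamma)\ne\gamma$; then any $t\in K$ with $\gamma(t)\in K$ satisfies $\sigma(\gamma)(t)=\gamma(t)$, i.e.\ $t$ is fixed by the nontrivial M\"obius transformation $\gamma^{-1}\sigma(\gamma)$, so there are at most two such $t$. Summing over the finite group $\Gamma$ yields finitely many $t_1$ in this case as well, so taking $B(h)$ to be the (finite) union over $\gamma\in\Gamma''$ of all the $t_1$'s thus produced gives a finite set with the stated property. The principal obstacle is the centralizer calculation showing $\Gamma\cap\PGL_2(K)=\{\mathrm{id}\}$; this is precisely where the hypothesis that $\Gal(h)$ is $\A_{n-1}$ or $\ST_{n-1}$ enters essentially.
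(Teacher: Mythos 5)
Your proposal is correct, and it follows the same overall strategy as the paper --- reduce $\bar{K}$-isomorphism of the curves to a fractional-linear transformation matching the branch loci, and cut down to the finite set of transformations determined by triples of roots --- but it differs in the key lemma. The paper first proves (Lemma \ref{curveshyperL}) that the \emph{full} stabilizer of $\RR_h$ in $\PGL_2(\bar{K})$ is trivial, invoking the classification of finite subgroups of $\PGL_2(\bar{K})$, and uses this both to force $T(\RR_h)\ne\RR_h$ and to show (Proposition \ref{curveshyperP}) that no element of its set $J_1$ is Galois-invariant; the bookkeeping is then done by the sets $B_1,B_2,B_3$. You never need the geometric stabilizer of $S$ to be trivial: you only show that its \emph{Galois-invariant} elements are trivial, via the elementary fact that the centralizer of $\A_{n-1}$ in $\ST_{n-1}$ is trivial (note that your argument uses only commutation with the Galois action on $S$, so it proves exactly this Galois-invariance statement; the phrase ``$\Gamma\cap\PGL_2(K)=\{\mathrm{id}\}$'' should be read that way, which is precisely what your fixed-point step requires), and you absorb the non-invariant stabilizer elements through the fixed points of $\gamma^{-1}\sigma(\gamma)$, just as the paper does for its set $B_2$. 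Your dichotomy (whether or not $\gamma$ stabilizes $S$) also gives a tidier case analysis: when $\gamma(S)\ne S$, the value $t_1=\gamma^{-1}(\beta)$ is outright determined by $\gamma$, which replaces the paper's $B_1\cup B_3$. What each route buys: the paper obtains the stronger geometric statement that no nontrivial transformation preserves $\RR_h$ (which it reuses inside its case analysis), while your argument is more elementary --- no appeal to the classification of finite subgroups of $\PGL_2(\bar{K})$, hence no characteristic-dependent input --- at the cost of a slightly less explicit exceptional set. One small point of exposition: state explicitly that by symmetry (replace $\gamma$ by $\gamma^{-1}$, which again lies in $\Gamma''$, resp.\ $\Gamma$) the same construction also places $t_2$ in $B(h)$; the paper makes the same ``by symmetry'' move.
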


\begin{rem}
It is well known (\cite[Ch. 2, Sect. 3, pp. 253--255]{GH}, \cite[Ch. VIII,
Sect. 3]{Dolgachev}) that the hyperelliptic curves $D(t_1)$ and $D(t_2)$ are
isomorphic over $\bar{K}$ if and only if there exists a fractional linear
transformation $T\in \PGL_2(\bar{K})=\Aut(\P^1)$ that sends the branch points
of the canonical double cover $D(t_1)\to \P^1$ to the branch points of the
canonical double cover $D(t_2)\to \P^1$.
\end{rem}

\begin{proof}[Proof of Theorem \ref{curveshyper}]
Let $\RR_h\subset \bar{K}$ be the $(n-1)$-element set of roots of $h(x)$. Let
us consider two distinct ordered triples
$$\{\alpha_1,\alpha_2,\alpha_3\}, \{\beta_1,\beta_2,\beta_3\}\subset \RR_h\subset
\bar{K}\subset\P^1(\bar{K})$$ of roots of $h(x)$. There exists exactly one
fractional-linear transformation
$$T=T(\alpha_1,\alpha_2,\alpha_3;\beta_1,\beta_2,\beta_3)\in  \PGL_2(\bar{K})$$
such that
$$T(\alpha_1)=\beta_1,  T(\alpha_2)=\beta_2,
T(\alpha_3)=\beta_3.$$  We write $J_1=J_1(h)$ for the set of all those $T$'s
(for all choices of $\alpha$'s and $\beta$'s). Clearly, $J_1$ is a finite
subset in $\PGL_2(\bar{K})$, whose cardinality bounded by a constant depending
only on $n$. It is also clear that $J_1$ does {\sl not} contain the identity
element and
$$J_1=\{T^{-1}\mid T \in J_1\}.$$
In addition, $J_1$ is $\Gal(K)$-stable.

\begin{lem}
\label{curveshyperL} Suppose that $\Gal(h)$ is either $\ST_{n-1}$ or
$\A_{n-1}$. Then the only fractional-linear transformation $U\in
\PGL_2(\bar{K})$ that sends $\RR_h$ into itself is the identity map.
\end{lem}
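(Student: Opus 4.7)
The plan is to reduce the assertion to a finite group-theoretic calculation. Since $\RR_h$ is finite and $U\in\PGL_2(\bar{K})$ is bijective on $\P^1(\bar{K})$, any $U$ with $U(\RR_h)\subseteq \RR_h$ actually permutes $\RR_h$, so the set $\Sigma$ of all such $U$ forms a group; and because $|\RR_h|=n-1\ge 6$, any fractional-linear transformation is determined by its values on three points of $\RR_h$, so restriction embeds $\Sigma$ injectively into $\Perm(\RR_h)\cong \ST_{n-1}$ as a subgroup $\Sigma'$. A short computation with $(\sigma U)(\beta)=\sigma(U(\sigma^{-1}\beta))$ shows that the natural $\Gal(K)$-action on $\Sigma$ (through coefficients of $U$) corresponds, via this embedding, to conjugation of $\Sigma'$ by the image of $\Gal(K)$ in $\Perm(\RR_h)$, which is $\Gal(h)\supseteq \A_{n-1}$. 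Hence $\A_{n-1}$ normalizes $\Sigma'$ inside $\ST_{n-1}$.

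Consequently, $\Sigma'\cap \A_{n-1}$ is a subgroup of the simple non-abelian group $\A_{n-1}$ (recall $n-1\ge 5$) that is normalized by $\A_{n-1}$, hence normal in it, so it is either $\{1\}$ or all of $\A_{n-1}$. The main obstacle of the proof is ruling out the second alternative, and I plan to do so by exploiting the fact that in any characteristic the stabilizer of a point of $\P^1(\bar{K})$ in $\PGL_2(\bar{K})$ is a Borel subgroup, hence solvable. If $\A_{n-1}\subseteq \Sigma'$, then $\A_{n-1}$ acts transitively on the $(n-1)$-element set $\RR_h$ via fractional-linear transformations, and the stabilizer in $\A_{n-1}$ of any $\alpha\in \RR_h$ is the intersection $\A_{n-1}\cap \Perm(\RR_h\setminus\{\alpha\})\cong \A_{n-2}$. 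Since $n\ge 7$ forces $n-2\ge 5$, the group $\A_{n-2}$ is non-abelian simple, and in particular non-solvable, contradicting its containment in a solvable Borel subgroup of $\PGL_2(\bar{K})$.

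In the remaining case $\Sigma'\cap \A_{n-1}=\{1\}$, the sign map embeds $\Sigma'$ into $\ST_{n-1}/\A_{n-1}\cong \Z/2\Z$, so $|\Sigma'|\le 2$. If $\Sigma'=\{1,\tau\}$ with $\tau\ne 1$, then the normalization condition forces $\A_{n-1}$ to centralize $\tau$; but the centralizer of the transitive subgroup $\A_{n-1}$ inside $\ST_{n-1}=\Perm(\RR_h)$ is trivial for $n-1\ge 4$, by the standard observation that the centralizer of a transitive permutation group acts semi-regularly, combined with the strong constraint coming from conjugating $3$-cycles. Hence $\tau=1$, a contradiction, so $\Sigma'=\{1\}$ and $U=\mathrm{id}$.
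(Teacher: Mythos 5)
Your proof is correct, and its key exclusion step takes a genuinely different route from the paper's. Both arguments start the same way: the stabilizer $\Sigma$ of $\RR_h$ in $\PGL_2(\bar{K})$ embeds into $\Perm(\RR_h)\cong\ST_{n-1}$ because a fractional-linear map is determined by its values at three points; Galois-stability of $\RR_h$ makes the image $\Sigma'$ invariant under conjugation by $\Gal(h)\supseteq\A_{n-1}$; and the normal-subgroup plus trivial-centralizer analysis leaves only $\Sigma'\in\{1,\A_{n-1},\ST_{n-1}\}$ (the paper compresses this into a single sentence, after first passing to a quadratic extension so that $\Gal(h)=\A_{n-1}$, a reduction you avoid by working with $\A_{n-1}\subseteq\Gal(h)$ directly). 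The divergence is in killing the large cases: the paper appeals to the classification of finite subgroups of $\PGL_2(\bar{K})=\PSL_2(\bar{K})$, whereas you note that a copy of $\A_{n-1}$ inside $\PGL_2(\bar{K})$ acting naturally on $\RR_h$ would force $\A_{n-2}$ (non-solvable since $n-2\ge 5$) into the stabilizer of a point of $\P^1$, which is a solvable Borel subgroup. Your route is more elementary and self-contained, and it is uniform in positive characteristic, where the classification also contains the families $\PSL_2(\F_q)$ and $\PGL_2(\F_q)$ (for instance $\A_6\cong\PSL_2(\F_9)$ in characteristic $3$, relevant when $n=7$), so the paper's one-line citation of the classification in fact needs an additional stabilizer-type argument of exactly the kind you make explicit; this is a small robustness gain of your approach.
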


\begin{proof}[Proof of Lemma \ref{curveshyperL}]
Replacing if necessary $K$ by its suitable quadratic extension, we may and will
assume that $\Gal(h)=\A_{n-1}$. Let us consider the subgroup
$$G=\{U \in \PGL_2(\bar{K})\mid U(\RR_h)=\RR_h\}\subset \PGL_2(\bar{K}).$$
Since $\#(\RR_h)=n-1\ge 6>3$, $G$ is a finite group and the natural
homomorphism $G \to \Perm(\RR_h)=\ST_{n-1}$ is injective; we write
$$G_0\subset \Perm(\RR_h)=\ST_{n-1}$$
for its image. Clearly, $G_0 \cong G$. Since $\RR_h$ is Galois-invariant, $G_0$
is stable with respect to the conjugation by elements of $\Gal(K)$. Since the
image of $\Gal(K)$ in $\Perm(\RR_h)$ is $\A_{n-1}$, the subgroup $G_0\subset
\ST_{n-1}$ is stable with respect to conjugation by elements of $\A_{n-1}$.
Since $n-1>5$, it follows that $G_0$ is either trivial or
 $\ST_{n-1}$ or $\A_{n-1}$. Therefore  $G$ is either trivial or isomorphic to
$\ST_{n-1}$ or $\A_{n-1}$. Now the classifications of finite subgroups of
$\PSL_2(\bar{K})=\PGL_2(\bar{K})$ tells us that $G$ is trivial and we are done.
\end{proof}

\begin{prop}
\label{curveshyperP} Suppose that $\Gal(h)$ is either $\ST_{n-1}$ or
$\A_{n-1}$. If $T\in J_1$ then there exists $\sigma\in\Gal(K)$ such that
$\sigma(T)\ne T$.
\end{prop}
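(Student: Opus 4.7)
The plan is to argue by contradiction: assume that $T \in J_1$ is fixed by every $\sigma \in \Gal(K)$. Then the fractional-linear transformation $T$ is defined over $K$, i.e., $T \in \PGL_2(K)$. The goal will be to deduce from this that $T$ maps $\RR_h$ into itself, and then invoke Lemma \ref{curveshyperL} to conclude $T = \mathrm{id}$, contradicting the fact (noted in the definition of $J_1$) that $J_1$ does not contain the identity.

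The key step is the passage from ``$T$ is $K$-rational'' to ``$T(\RR_h) \subseteq \RR_h$''. By definition, $T = T(\alpha_1,\alpha_2,\alpha_3;\beta_1,\beta_2,\beta_3)$ satisfies $T(\alpha_i) = \beta_i$ for $i = 1,2,3$, with $\alpha_i, \beta_i \in \RR_h$. If $T$ has coefficients in $K$, then for every $\sigma \in \Gal(K)$ and every $i$ we have $T(\sigma(\alpha_i)) = \sigma(T(\alpha_i)) = \sigma(\beta_i) \in \RR_h$. Since $\Gal(h) = \A_{n-1}$ or $\ST_{n-1}$ acts transitively (in fact $3$-transitively, given $n-1 \ge 6$) on $\RR_h$, for any $\alpha \in \RR_h$ there exists $\sigma \in \Gal(K)$ with $\sigma(\alpha_1) = \alpha$; then $T(\alpha) = \sigma(\beta_1) \in \RR_h$. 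Hence $T(\RR_h) \subseteq \RR_h$, and because $T$ is a bijection and $\RR_h$ is finite, $T(\RR_h) = \RR_h$.

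At this point Lemma \ref{curveshyperL} applies verbatim and forces $T$ to be the identity in $\PGL_2(\bar K)$, yielding the required contradiction with $T \in J_1$. The only potential subtlety is ensuring the correct interpretation of the $\Gal(K)$-action on $\PGL_2(\bar K)$ (namely, entry-wise on a representing matrix, modulo scalars), so that ``$\sigma(T) = T$ for all $\sigma$'' is indeed equivalent to ``$T$ has a representative matrix in $\GL_2(K)$''; this is standard Galois descent for $\PGL_2$ and introduces no real obstacle. Thus the proof reduces essentially to the transitivity of $\Gal(h)$ on $\RR_h$ plus the already-established Lemma \ref{curveshyperL}, and does not require any further input.
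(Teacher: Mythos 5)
Your proof is correct and follows essentially the same route as the paper: assume $T$ is Galois-fixed, use equivariance together with the transitivity of $\Gal(h)$ on $\RR_h$ (the paper phrases this as ``$\RR_h$ is the Galois orbit of $\alpha$'') to get $T(\RR_h)=\RR_h$, then apply Lemma \ref{curveshyperL} and the fact that $\mathrm{id}\notin J_1$ to reach a contradiction. The descent remark about $\PGL_2$ and Hilbert 90 is harmless but not needed, since only the equivariance $T\circ\sigma=\sigma\circ T$ is used.
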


\begin{proof}[Proof of Proposition \ref{curveshyperP}]
Assume that $\sigma(T)= T$ for all $\sigma\in\Gal(K)$.
 Since $T\in J_1$, there exists $\alpha\in \RR_h$ such that
$T(\alpha)\in \RR_h$. Since $\RR_h$ coincides with the Galois orbit of
$\alpha$, we have $T(\RR_h)=\RR_h$. By Lemma \ref{curveshyperL}, $T$ is the
identity map. But $J_1$ does not contain the identity element. The obtained
contradiction proves the desired result.
\end{proof}

{\bf Continuing the proof of Theorem \ref{curveshyper}}.

Let $B_1$ be the set of all $t\in K$ such that there exist $\alpha\in\RR_h$ and
$T\in J_1$ such that $t=T(\alpha)$. Let $B_2$ be the set of all $t\in K$ such
that there exist $\sigma \in\Gal(K)$ and $T\in J_1$ such that $\sigma(T)\ne T$
and  $t$ is a fixed point of $\sigma(T)T^{-1}$. Let $B_3$ be the set of all
$t\in K$ such that there exists $T\in J_1$ such that $t=T(\infty)$.

Clearly,  $B_1$, $B_2$ and $B_3$ are finite sets, whose cardinalities are
bounded by a constant depending only on $n$. Let us put
$$B=B(h)=B_1 \cup B_2 \cup B_3.$$

{\bf End of the proof of Theorem \ref{curveshyper}}. First assume that $n$ is
even. Then the set of the ramification points of the double cover
$$D(t) \to \P^1, \ (x,y) \mapsto x$$
coincides with $\RR_{f_t}=\{t\}\cup \RR_h$. So, if $t_1 \ne t_2$ and $D(t_1)$
and $D(t_2)$ are isomorphic over $\bar{K}$ then there exists $T \in
\PGL_2(\bar{K})$ such that
$$T(\{t_1\}\cup \RR_h)=\{t_2\}\cup \RR_h.$$
By Lemma \ref{curveshyperL}, $T(\RR_h)\ne  \RR_h$. This implies that there is
$\beta \in \RR_h$ with $T(\beta)=t_2$. It follows that $T(\RR_h\setminus
\{\beta\}) \subset \RR_h$. Since $\#(\RR_h\setminus \{\beta\})=n-2>3$, the
transformation $T$ lies in $J_1$ and therefore $t_2\in B_1$. By symmetry, $t_1$
also lies in $B_1$.

Now assume that $n$ is odd. Then the set of the ramification points of the
double cover
$$D(t) \to \P^1, \ (x,y) \mapsto x$$
coincides with $\{\infty\}\cup \RR_{f_t}=\{\infty\}\cup \{t\}\cup \RR_h$. So,
if $t_1 \ne t_2$ and $D(t_1)$ and $D(t_2)$ are isomorphic over $\bar{K}$ then
there exists $T \in \PGL_2(\bar{K})$ such that
$$T(\{\infty\}\cup\{t_1\}\cup \RR_h)=\{\infty\}\cup\{t_2\}\cup \RR_h.$$
By Lemma \ref{curveshyperL}, $T(\RR_h)\ne  \RR_h$. So, either there is $\beta
\in \RR_h$ such that  $T(\beta)=t_2$ or $T(\RR_h)$ does {\sl not} contain $t_2$
but contains $\infty$. In the former case, arguments as above prove that
$t_2\in B_1$. In the latter case, either $T(t_1)=t_2$ or $T(\infty)=t_2$. In
the former case, $\sigma(T)(t_1)=t_2$ for all $\sigma \in \Gal(K)$ and
therefore $t_2$ is a fixed point
 of $\sigma(T)T^{-1}$, which implies that $t_2
\in B_2$. In the latter case, $T(\infty)=t_2$ and therefore $t_2 \in B_3$.
However, we always have
$$t_2 \in B_1 \cup B_2 \cup B_3=B.$$
By symmetry, $t_1\in B$.

\end{proof}

\begin{thm}
Suppose that $n \ge 8$ is an integer that does not equal  $9$. Let $K$ be a
field of characteristic zero and $h(x) \in K[x]$  an irreducible polynomial of
degree $n-1$. Assume also that $\Gal(h)$ is either $\ST_{n-1}$ or $\A_{n-1}$.
Then there exists a finite set $B=B(h)\subset K$ that enjoys the following
properties.

Let $t_1$ and $t_2$ be distinct elements of $K$ and let $f_1(x)=(x-t_1)h(x), \
f_2(x)=(x-t_2)h(x)$. If the abelian varieties $J(C_{f_1})$ and $J(C_{f_2})$ are
isomorphic over $\bar{K}$ then both $t_1$ and $t_2$ belong to $B$.
\end{thm}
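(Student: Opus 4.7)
The plan is to reduce the claim to Theorem~\ref{curveshyper} via Torelli's theorem. First I would invoke Theorems~\ref{even} and~\ref{oddZ} to conclude that $\End(J(C_{f_i}))=\Z$ for $i=1,2$. Indeed, for $n\ge 8$ even one applies Theorem~\ref{even}, while for $n\ge 11$ odd (using $\fchar(K)=0$) one applies Theorem~\ref{oddZ}; together these cover all $n\ge 8$ with $n\ne 9$. The excluded value $n=9$ is precisely where only Theorem~\ref{odd} is available, which leaves room for a quadratic endomorphism algebra and would break the uniqueness-of-polarization step below.

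The second step is to pin down the principal polarization. Since $\End^0(J(C_{f_i}))=\Q$, the Rosati-symmetric part of $\End^0(J(C_{f_i}))$ is $\Q$, so $\NS(J(C_{f_i}))\otimes\Q$ is one-dimensional, and the integral N\'eron--Severi group $\NS(J(C_{f_i}))$ is free abelian of rank one. If $\lambda_0$ is its positive generator, every polarization is an integer multiple $m\lambda_0$ with $m\ge 1$, and a degree calculation for a principal polarization forces $m=1$ and $\lambda_0$ itself to have degree one. In particular, the canonical theta polarization is, up to isomorphism, the \emph{only} principal polarization on $J(C_{f_i})$.

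Now any isomorphism $\phi\colon J(C_{f_1})\xrightarrow{\sim}J(C_{f_2})$ of abelian varieties over $\bar{K}$ transports the canonical polarization of $J(C_{f_1})$ to a principal polarization of $J(C_{f_2})$, which the preceding paragraph identifies with the canonical polarization of $J(C_{f_2})$. Hence $\phi$ is an isomorphism of principally polarized abelian varieties, and the Torelli theorem for hyperelliptic curves delivers an isomorphism $C_{f_1}\cong C_{f_2}$ over $\bar{K}$. Applying Theorem~\ref{curveshyper} to $h(x)$ (whose hypotheses $n\ge 7$ and $\Gal(h)\in\{\ST_{n-1},\A_{n-1}\}$ are satisfied) produces the desired finite set $B=B(h)\subset K$ and forces both $t_1$ and $t_2$ to lie in $B$.

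The main obstacle is the polarization-matching step: a bare isomorphism of abelian varieties need not respect polarizations in general, so one genuinely needs the earlier theorems of the paper guaranteeing $\End=\Z$, after which the rank-one N\'eron--Severi group pins everything down. Once that is in hand, the rest is a direct appeal to Torelli and Theorem~\ref{curveshyper}.
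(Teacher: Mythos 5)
Your proposal is correct and follows essentially the same route as the paper: Theorems \ref{even} and \ref{oddZ} give $\End(J(C_{f_i}))=\Z$ (with $n=9$ excluded exactly because only Theorem \ref{odd} would apply there), whence the theta polarization is the unique principal polarization, so any $\bar{K}$-isomorphism respects polarizations, Torelli yields $C_{f_1}\cong C_{f_2}$, and Theorem \ref{curveshyper} finishes. Your N\'eron--Severi rank-one argument is simply a fuller justification of the uniqueness-of-polarization step the paper asserts in one sentence.
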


\begin{proof}
Let $B(h)$ be as in Theorem \ref{curveshyper}. By Theorems \ref{even} and
\ref{oddZ}, $\End(J(C_{f_1}))=\Z$ and $\End(J(C_{f_2})))=\Z$. This implies that
both jacobians $J(C_{f_1})$ and $J(C_{f_2})$ have exactly one principal
polarization and therefore a $\bar{K}$-isomorphism  of abelian varieties
$J(C_{f_1})\cong J(C_{f_2})$ respects the principal polarizations. Now the
Torelli theorem implies that the hyperelliptic curves $C_{f_1}$ and $C_{f_2}$
 are isomorphic over $\bar{K}$.
  It follows from Theorem \ref{curveshyper}  that both $t_1$ and $t_2$ belong to $B(h)$.

\end{proof}

\section{Compact groups and simple Lie algebras}
\label{minuscule}

\begin{lem}
\label{kerimage}
 Let $G$ be a compact group, $g\ge 3$  an integer
  and $\pi:G \twoheadrightarrow \A_{2g}$ a continuous
surjective group homomorphism. Let $M$ be a finite group and $\pi^{\prime}: G
\to M$ be a continuous group homomorphism. Let us put $H:=\ker(\pi^{\prime})$.
Suppose that one of the following conditions holds:
\begin{itemize}
\item[(i)] $M$ is solvable. \item[(ii)] The order of $M$ is strictly less than
the order of $\A_{2g}$.

\item[(iii)] The homomorphism $\pi^{\prime}$ is surjective and there does not
exist a surjective group homomorphism $M \twoheadrightarrow \A_{2g}$.
\end{itemize}

Then $H$ is a normal open  compact subgroup of finite index in $G$ and
$\pi(H)=\A_{2g}$, i.e., $\pi:H\to \A_{2g}$ is a surjective continuous
homomorphism.
\end{lem}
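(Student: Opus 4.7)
My plan is to reduce everything to the simplicity of $\A_{2g}$ (which holds since $2g\ge 6$) and then handle the three cases uniformly by factoring $\pi$ through $G/H$.

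First I would verify the elementary topological claims. Since $\pi'$ is continuous into the finite discrete group $M$, the subgroup $H=\ker(\pi')$ is closed in $G$; as a closed subgroup of a compact group it is itself compact. It is visibly normal. To see that $H$ is open of finite index, note that $G/H$ injects into $M$ via the map induced by $\pi'$, so $G/H$ is finite; since $G$ is compact and $H$ has finite index, $H$ must be open.

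The main content is showing $\pi(H)=\A_{2g}$. Here the key observation is that $\pi(H)$ is normal in $\pi(G)=\A_{2g}$ (since $H\triangleleft G$ and $\pi$ is surjective). Because $g\ge 3$, the group $\A_{2g}$ is simple non-abelian, so $\pi(H)$ is either $\{1\}$ or all of $\A_{2g}$. Assume for contradiction that $\pi(H)=\{1\}$, i.e., $H\subset\ker(\pi)$. Then $\pi$ factors through the quotient $G/H$, yielding a \emph{surjective} homomorphism
\[
\bar\pi:G/H \twoheadrightarrow \A_{2g}.
\]
Moreover $G/H\hookrightarrow M$ via $\pi'$, so $G/H$ is (isomorphic to) a subgroup of $M$ admitting a surjection onto $\A_{2g}$.

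Now I would dispatch the three cases from this single setup. In case (i), the solvability of $M$ forces $G/H$ to be solvable, contradicting the existence of the surjection $\bar\pi$ onto the simple non-abelian group $\A_{2g}$. In case (ii), the surjection $\bar\pi$ forces $|G/H|\ge |\A_{2g}|$, whereas $|G/H|\le |M|<|\A_{2g}|$ by hypothesis, a contradiction. In case (iii), the additional surjectivity of $\pi'$ gives $G/H\cong M$, so $\bar\pi$ becomes a surjection $M\twoheadrightarrow\A_{2g}$, directly contradicting the assumption. In each case we reach the desired contradiction, so $\pi(H)=\A_{2g}$.

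There is essentially no hard step here; the proof is a short group-theoretic argument once one observes that the hypothesis in (i)--(iii) is precisely what rules out a surjection from a subgroup of $M$ onto $\A_{2g}$. The only mild subtlety is being careful in case (iii) to use surjectivity of $\pi'$ to identify $G/H$ with $M$ itself rather than just a subgroup.
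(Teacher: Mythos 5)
Your proof is correct and follows essentially the same route as the paper: show $\pi(H)$ is normal in the simple group $\A_{2g}$, and if it were trivial, factor $\pi$ through $G/H\hookrightarrow M$ and rule out the resulting surjection onto $\A_{2g}$ in each of the three cases (the paper merely packages (i) and (ii) by first replacing $M$ with $\pi'(G)$, reducing to case (iii)). No gaps.
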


\begin{proof}
The first assertion about $H$ is obvious. Replacing $M$ by its subgroup
 $\pi^{\prime}(G)$ we may and will assume that $\pi^{\prime}$ is surjective,
 i.e., $M=\pi^{\prime}(G)$. In particular,   $G/H \cong M$.

 In order to prove the existence of the homomorphism, it suffices to do the case (iii),
  because in both cases (i) and (ii) a surjection $M\twoheadrightarrow\A_{2g}$ does not exist, since  $\A_{2g}$
  is simple non-abelian.
So, let us assume that there are no  surjective group homomorphisms from $G$ to
$M$. The normality of $H$ and surjectiveness of $\pi$ imply that $\pi(H)$ is
normal in $\A_{2g}$, i.e., either $\pi(H)=\A_{2g}$ or $\pi(H)=\{1\}$. If
$\pi(H)=\A_{2g}$ then we are done. If $\pi(H)=\{1\}$ then $\pi$ factors through
$G/H=M$ and we get a surjective homomorphism $M \twoheadrightarrow \A_{2g}$.
\end{proof}

\begin{sect}
Let $V$ be a non-zero even-dimensional $\Q_{\ell}$-vector space,
$$e:V \times V \to \Q_{\ell}$$
an alternating nondegenerate $\Q_{\ell}$-bilinear form,
$$\Sp(V,e)=\Aut(V,e)=\{s \in \Aut_{\Q_{\ell}}(V)\mid e(sx,sy)=e(x,y) \
\forall x,y\in V\} \subset \Aut_{\Q_{\ell}}(V)$$ the corresponding symplectic
group, viewed as a closed $\ell$-adic Lie subgroup in
 $\Aut_{\Q_{\ell}}(V)$ and
$$\sp(V,e):=\Lie(\Sp(V,e)) =$$
$$ \{u \in \End_{\Q_{\ell}}(V)\mid e(ux,y)+e(x,uy)=0 \ \forall x,y\in V\}\subset
\End_{\Q_{\ell}}(V)$$ its Lie algebra, viewed as a $\Q_{\ell}$-Lie subalgebra
in $\End_{\Q_{\ell}}(V)$. It is well-known that
$$\Sp(V,e)\subset \SL(V), \ \sp(V,e)\subset \Lie(\SL(V))=\sL(V):=\{u\in \End_{\Q_{\ell}}(V)\mid
\tr_V(u)=0\}$$ where $\tr_V:\End_{\Q_{\ell}}(V) \to \Q_{\ell}$ is the trace
map. Let us consider the group of symplectic similitudes
$$\Gp(V,e)=\{s \in \Aut_{\Q_{\ell}}(V)\mid \exists c \in \Q_{\ell}^{*} \ \mbox{such that } e(sx,sy)=
c\cdot  e(x,y)$$ $$ \ \forall x,y\in V\}\subset \Aut_{\Q_{\ell}}(V);$$ it is
also an $\ell$-adic Lie (sub)group, whose Lie algebra coincides with
$\Q_{\ell}\I\oplus \sp(V,e)$ where $\I:V\to V$ is the identity map. The map $s
\mapsto c$ defines the $\ell$-adic Lie group homomorphism
$$\chi_e:\Gp(V,e)\to \Q_{\ell}^{*},$$
whose kernel coincides with $\Sp(V,e)$.

Let $G$ be a compact subgroup in $\Aut_{\Q_{\ell}}(V)$. The compactness implies
that $G$ is closed. By the $\ell$-adic version of Cartan's theorem \cite[Part
II, Ch. V, Sect. 9, p. 155]{SerreLie}, $G$ is an $\ell$-adic Lie (sub)group. We
write $\Lie(G)$ for the Lie algebra of $G$. Then $\Lie(G)$ is a $\Q_{\ell}$-Lie
subalgebra of $\End_{\Q_{\ell}}(V)$. If $G \subset \Sp(V,e)$ or $G \subset
\Gp(V,e)$ then $\Lie(G)\subset \sp(V,e)$ or $\Lie(G)\subset \Q_{\ell}\I\oplus
\sp(V,e)$ respectively.
\end{sect}

\begin{lem}
\label{groupal}
 Let $G$ be a compact subgroup in $\Aut_{\Q_{\ell}}(V)$. Suppose
that for every open subgroup $G^{\prime}\subset G$ of finite index the
$G^{\prime}$-module $V$ is absolutely simple. Then there exists a semisimple
$\Q_{\ell}$-Lie algebra $\g^{ss}\subset \End_{\Q_{\ell}}(V)$ that enjoys the
following properties:
\begin{itemize}
\item[(i)] $\Lie(G)=\g^{ss}$ or $ \Q_{\ell}\I\oplus \g^{ss}$.
 \item[(ii)] The
$\g^{ss}$-module $V$ is absolutely simple. \item[(iii)] If $G\subset \Gp(V,e)$
then
$$\g^{ss}=\Lie(G)\bigcap\sp(V,e)\subset \sp(V,e).$$
 In addition, if $G^0=\ker(\chi_e:G \to
\Q_{\ell}^{*})$ then $\Lie(G^{0})=\g^{ss}$.
\end{itemize}
\end{lem}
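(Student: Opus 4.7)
The plan is to set $\g := \Lie(G)\subset \End_{\Q_\ell}(V)$ and begin by transferring the absolute-irreducibility hypothesis from the group $G$ to the Lie algebra $\g$. Every open subgroup of the compact group $G$ has finite index and shares the Lie algebra $\g$; I would choose $G' \subset G$ open and uniform pro-$\ell$ (which exists inside any compact $\ell$-adic Lie group). The $\ell$-adic logarithm then identifies $G'$ with a $\Z_\ell$-lattice $L \subset \g$ via $g=\exp(x)$, $x\in L$. Any $\bar{\Q}_\ell$-subspace $W\subset V\otimes_{\Q_\ell}\bar{\Q}_\ell$ that is stable under $\g$ is stable under each $\exp(x)$ (the exponential series converges and $W$ is invariant under the action of $x$), hence under $G'$. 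By hypothesis, $V$ is absolutely simple as a $G'$-module, so the upshot is that $V$ is absolutely simple as a $\g$-module.

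Next I would carry out the Lie-algebraic structure analysis. Let $\mathfrak{r}$ be the solvable radical of $\g$ and pass to $V_{\bar{\Q}_\ell}:=V\otimes\bar{\Q}_\ell$. Lie's theorem provides a common eigenvector for $\mathfrak{r}$ with character $\lambda:\mathfrak{r}\to\bar{\Q}_\ell$, and since $\mathfrak{r}$ is a $\g$-ideal, the associated weight space is $\g$-stable; by absolute simplicity it must equal $V_{\bar{\Q}_\ell}$. Hence $\mathfrak{r}$ acts by scalars, which forces $\mathfrak{r}\subset \bar{\Q}_\ell\cdot\I \cap \End_{\Q_\ell}(V) = \Q_\ell\cdot\I$, so $\dim_{\Q_\ell}\mathfrak{r}\le 1$. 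A Levi complement produces a semisimple subalgebra $\g^{ss}\subset\g$ with $\g = \mathfrak{r}\oplus\g^{ss}$, establishing (i). Assertion (ii) follows at once: any $\g^{ss}$-invariant subspace of $V_{\bar{\Q}_\ell}$ is automatically $\mathfrak{r}$-invariant (the radical acting by scalars) and therefore $\g$-invariant, so absolute simplicity descends from $\g$ to $\g^{ss}$.

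For (iii), suppose $G\subset\Gp(V,e)$, so that $\g\subset \Q_\ell\I\oplus\sp(V,e)$. A bracket computation yields
\[
[\g,\g]\subset[\Q_\ell\I\oplus\sp(V,e),\,\Q_\ell\I\oplus\sp(V,e)]\subset\sp(V,e),
\]
and since $\g^{ss}=[\g^{ss},\g^{ss}]$ this gives $\g^{ss}\subset\sp(V,e)$. Because $\tr(\I)=\dim V\ne 0$ while $\sp(V,e)\subset\sL(V)$, the subspaces $\Q_\ell\I$ and $\sp(V,e)$ are transverse in $\End_{\Q_\ell}(V)$; combined with the decomposition $\g=\mathfrak{r}\oplus\g^{ss}$ this gives $\g\cap\sp(V,e)=\g^{ss}$. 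For the last sentence of (iii), I would note that the similitude character satisfies $\chi_e(t\I)=t^2$, so $d\chi_e(\I)=2\ne 0$, while $d\chi_e$ vanishes on $\sp(V,e)=\Lie(\Sp(V,e))$; hence $\Lie(G^0)=\ker(d\chi_e|_\g)=\g\cap\sp(V,e)=\g^{ss}$.

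The main obstacle I anticipate is the very first step: the hypothesis involves \emph{every} open finite-index subgroup, while the conclusion is phrased purely in Lie-algebra terms, and bridging the two requires the standard but nontrivial log--exp dictionary on a uniform pro-$\ell$ subgroup together with the verification that $\g$-stability of a $\bar{\Q}_\ell$-subspace genuinely upgrades to $G'$-stability. Once this reduction is in hand, the remainder is classical Lie theory (Lie's theorem plus Levi decomposition) together with a short computation with $\chi_e$.
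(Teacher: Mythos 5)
Your argument is correct, and its skeleton matches the paper's (pass to a small open subgroup where $\exp$/$\log$ connect $G$ with $\g=\Lie(G)$, show $\g$ is reductive with center at most $\Q_{\ell}\I$, take $\g^{ss}$ to be the semisimple part, push it into $\sp(V,e)$ by a bracket computation, and identify $\Lie(G^0)$ via the kernel-of-the-tangent-map fact), but the middle steps are carried out differently. The paper quotes Serre's result that $\Lie(G)$ is reductive because the $G$-module $V$ is semisimple, and then computes the commutant $\End_{\Lie(G)}(V)=\End_{G^{\prime}}(V)=\Q_{\ell}\I$ for a sufficiently small $G^{\prime}$, deducing absolute simplicity of the $\g^{ss}$-module from semisimplicity plus trivial commutant; you instead transfer absolute simplicity of the $G^{\prime}$-module directly to the $\g$-module via the $\log$--$\exp$ dictionary, then apply Lie's theorem to the radical (the weight-space invariance lemma forces the radical to act by scalars, hence to lie in $\Q_{\ell}\I$) and a Levi complement to get the decomposition --- a more self-contained route that avoids the external reductivity citation, at the cost of invoking Lazard-type facts about uniform subgroups where a plain congruence subgroup $1+\ell^2\End_{\Z_{\ell}}(T)$ would already suffice. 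For the last claim of (iii), your uniform computation ($d\chi_e(\I)=2\ne 0$ while $d\chi_e$ kills $\sp(V,e)$, so $\ker(d\chi_e|_{\g})=\g\cap\sp(V,e)=\g^{ss}$) replaces, and slightly streamlines, the paper's two-case argument (codimension count when $\Q_{\ell}\I\subset\Lie(G)$, vanishing of the tangent map on a semisimple algebra otherwise); both rest on the same fact from Serre's book that the Lie algebra of the kernel is the kernel of the tangent map, which you should cite rather than use silently.
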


\begin{proof}Clearly,
$\Lie(G)=\Lie(G^{\prime})$ for all $G^{\prime}$.  Since the $G$-module $V$ is
(semi)simple, it follows from  \cite[Proposition 1]{SerreDiv} that $\Lie(G)$ is
reductive, i.e., $\Lie(G)= \g^{ss}\oplus \cc$ where $\cc$ is the center of
$\Lie(G)$ and $\g^{ss}$ is a semisimple $\Q_{\ell}$-Lie algebra. I claim that
$\End_{\Lie(G)}(V)=\Q_{\ell}\I$. Indeed, let $G^{\prime}$ be an open subgroup
of finite index that is sufficiently small in order to lie in the image of the
exponential map. Then $\End_{G^{\prime}}(V)=\End_{\Lie(G)}(V)$. In light of the
{\sl absolute} simplicity of the $G^{\prime}$-module $V$, we have
$\End_{G^{\prime}}(V)=\Q_{\ell}\I$ and therefore
$$\End_{\Lie(G)}(V)=\End_{G^{\prime}}(V)=\Q_{\ell}\I.$$
Since the center $\cc$ lies in $\End_{\Lie(G)}(V)$, either $\cc=\{0\}$ and
$\Lie(G)=\g^{ss}$ or $\cc=\Q_{\ell}\I$ and $\Lie(G)=\Q_{\ell}\I\oplus\g^{ss}$.
In both cases
$$\Q_{\ell}\I=\End_{\Lie(G)}(V)=\End_{\g^{ss}}(V).$$
Since $\g^{ss}$ is semisimple, the $\g^{ss}$-module $V$ is absolutely simple.
This proves (i) and (ii). In order to prove (iii), notice that in both cases
the semisimple $\g^{ss}=[\Lie(G),\Lie(G)]$. Clearly, $\Lie(G)\subset
\Q_{\ell}\I\oplus\sp(V,e)$. Taking into account that $\sp(V,e)$ is a simple Lie
algebra, we have
$$\g^{ss}=[\Lie(G),\Lie(G)]\subset [\Q_{\ell}\I\oplus\sp(V,e),\Q_{\ell}\I\oplus\sp(V,e)]=\sp(V,e).$$
It follows easily that
$$\g^{ss}=\Lie(G)\bigcap \sp(V,e).$$
 In order to compute $\Lie(G^0)$, notice that according to
\cite[Part II, Ch. V, Sect. 2, p. 131]{SerreLie}, the Lie algebra of the kernel
of $\chi_e$ coincides with the kernel of the corresponding tangent map
$\Lie(G)\to \Lie(\Q_{\ell}^*)=\Q_{\ell}$. It follows that either
$\Lie(G^0)=\Lie(G)$ or $\Lie(G^0)$ is a Lie subalgebra of codimension $1$ in
$\Lie(G)$. On the other hand, since $G^0=\Sp(V,e)\bigcap G$, we have
$$\Lie(G^0)\subset \Lie(G)\bigcap \sp(V,e)=\g^{ss}.$$
So, if $\Lie(G)=\Q_{\ell}\I\oplus\g^{ss}$ then the (co)dimension arguments
imply that $\Lie(G^0)=\g^{ss}$. If $\Lie(G)=\g^{ss}$ then the tangent map is
the zero map, because $\Lie(G)$ is semisimple and $\Q_{\ell}$ is commutative.
This implies that the kernel of the tangent map coincides with the whole
$\Lie(G)$, i.e.,
$$\Lie(G^0)=\Lie(G)=\g^{ss}.$$
\end{proof}

\begin{thm}
\label{simpleLie}
 Let $g \ge 5$ be an integer, $V$  a $2g$-dimensional vector
space over $\Q_{2}$. Let $G \subset \SL(V)\subset \Aut_{\Q_2}(V)$ be a compact
$2$-adic Lie group that enjoys the following properties:

\begin{itemize}
\item[(i)] There exists a continuous surjective homomorphism $\pi:G
\twoheadrightarrow \A_{2g}$;

\item[(ii)] Let $\g \subset \End_{\Q_2}(V)$ be the $\Q_{2}$-Lie algebra of $G$.
Then the $\g$-module $V$ is absolutely simple, i.e., the natural representation
of $\g$ in $V$ is irreducible and $\End_{\g}(V)=\Q_{2}$.
\end{itemize}

Then:
\begin{itemize}
\item[(i)] The $\Q_2$-Lie algebra $\g$ is absolutely simple.

\item[(ii)] Let $L/\Q_2$ be a finite Galois field extension such that the
$L$-Lie algebra $\g_L:=\g\otimes_{\Q_2}L$ is split. (Such $L$ always exists.)
If $W$ is a faithful simple $\g_L$-module of finite $L$-dimension then
$\dim_{L}(W)\ge 2g-2$.
\end{itemize}
\end{thm}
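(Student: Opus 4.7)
For part (i), Lemma \ref{groupal}(i) applied to $G \subset \SL(V)$ implies that $\Lie(G)$ is either $\g^{ss}$ or $\Q_2\I \oplus \g^{ss}$; since $G \subset \SL(V)$ forces $\g \subset \sL(V)$ and $\I \notin \sL(V)$, we conclude that $\g = \g^{ss}$ is semisimple. Suppose for contradiction that $\g$ is not absolutely simple, and let $L/\Q_2$ be a finite Galois extension splitting $\g$, so that $\g_L = \bigoplus_{j=1}^{r} \mathfrak{s}_j$ with $r \ge 2$ absolutely simple factors. The absolutely simple $\g_L$-module $V_L$ then decomposes as a tensor product $V_L \cong \bigotimes_{j=1}^r U_j$ of faithful simple $\mathfrak{s}_j$-modules $U_j$ with $\dim_L U_j \ge 2$, whence each $\dim_L U_j \le g$.

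Let $\tilde{\mathbf{G}}$ denote the simply connected $\Q_2$-algebraic group with Lie algebra $\g$, so $\tilde{\mathbf{G}}_L = \prod_j \tilde{\mathbf{H}}_j$ is a product of simply connected absolutely simple factors. The natural isogeny from $\tilde{\mathbf{G}}$ onto the connected algebraic envelope $\mathbf{G}^0$ of $G$ in $\GL(V)$ has finite central kernel, so (after replacing $G$ by an open finite-index subgroup if necessary, using Lemma \ref{kerimage}) the preimage $\tilde{G} \subset \tilde{\mathbf{G}}(\Q_2)$ of $G$ is a compact subgroup surjecting onto $G$, and hence onto $\A_{2g}$ via $\pi$. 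Base-changing, $\tilde{G} \subset \prod_j \tilde{\mathbf{H}}_j(L)$. For each $j$, let $N_{-j} := \tilde{G} \cap \prod_{i \ne j} \tilde{\mathbf{H}}_i(L)$, which is normal in $\tilde{G}$; since $\A_{2g}$ is simple non-abelian, $\pi(N_{-j})$ equals $\{1\}$ or $\A_{2g}$. If $\pi(N_{-j}) = \{1\}$, then $\pi$ factors through the projection $\tilde{G} \to \tilde{\mathbf{H}}_j(L) \subset \GL_L(U_j)$, producing a compact subgroup of $\GL_L(U_j)$ that surjects onto $\A_{2g}$ and has $L$-dimension at most $g$. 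If $\pi(N_{-j}) = \A_{2g}$, then $N_{-j}$, acting on $\bigotimes_{i \ne j} U_i$ of $L$-dimension $2g/\dim_L U_j \le g$, yields after quotienting by the finite central kernel of the representation (which lies in $\ker\pi$ because $\A_{2g}$ is centerless) a compact subgroup of $\GL_L(\bigotimes_{i \ne j} U_i)$ surjecting onto $\A_{2g}$. In either case, Corollary \ref{noA} with $m = 2g \ge 10$ and an $L$-vector space of dimension $\le g < 2g - 2 = m - 2$ yields a contradiction.

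For part (ii), $\g_L$ is split simple by (i). Suppose $W$ is a faithful simple $\g_L$-module of $L$-dimension $d < 2g - 2$. Since $\tilde{\mathbf{G}}_L$ is simply connected, the Lie algebra representation $\g_L \to \End_L(W)$ integrates to an algebraic representation $\rho : \tilde{\mathbf{G}}_L \to \GL_L(W)$ with finite central kernel. As in part (i), the preimage $\tilde{G} \subset \tilde{\mathbf{G}}(\Q_2)$ of (a suitable open finite-index subgroup of) $G$ is a compact group surjecting onto $\A_{2g}$. Composing the inclusion $\tilde{G} \subset \tilde{\mathbf{G}}(L)$ with $\rho$ produces a compact subgroup $\rho(\tilde{G}) \subset \Aut_L(W)$; since $\ker\rho$ is central in $\tilde{\mathbf{G}}$ and $\A_{2g}$ is centerless, $\pi$ descends to a surjective continuous homomorphism $\rho(\tilde{G}) \twoheadrightarrow \A_{2g}$. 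Applying Corollary \ref{noA} with $m = 2g$ to $\rho(\tilde{G}) \subset \Aut_L(W)$ (where $\dim_L W = d < 2g - 2 = m - 2$) contradicts the existence of this surjection. The main technical obstacle in both parts is the bookkeeping with algebraic envelopes, simply connected covers, and the finite central kernels introduced along the way; these are all absorbed into $\ker \pi$ precisely because $\A_{2g}$ has trivial center.
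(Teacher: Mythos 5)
Your plan is essentially the same strategy as the paper's: both proofs first observe that $G\subset\SL(V)$ forces $\g$ to be semisimple, decompose $V_L$ as a tensor product of faithful simple modules over the simple factors of $\g_L$, and then drive everything into Corollary \ref{noA} (the Feit--Tits/Wagner bound in characteristic $2$) applied to a module of dimension $\le g<2g-2$, using the simplicity and triviality of the center of $\A_{2g}$ to push the surjection $\pi$ through finite central kernels and finite-index reductions. Where you diverge is the middle: the paper passes to the adjoint representation $\Ad:G\to\Aut(\g_L)$, whose finite quotients are controlled (the permutation group of the $\le\log_2(2g)<g$ simple factors, and the solvable groups $\Aut(\g_i)/\r_i^{\Ad}(L)$ of order dividing $6$), and then invokes \cite[Prop.\ 3.3 and Cor.\ 3.4(2)]{ZarhinMMJ} to transfer the surjection onto $\A_{2g}$ to a compact subgroup of a single simply connected factor $\r_j(L)$; you instead work with the algebraic envelope and its simply connected cover $\tilde{\mathbf{G}}$, and your dichotomy on $N_{-j}=\tilde{G}\cap\prod_{i\ne j}\tilde{\mathbf{H}}_i(L)$ is a nice inline substitute for the cited Prop.\ 3.3, which also lets you skip the factor-permutation and outer-automorphism reductions entirely.

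The one place where more than ``bookkeeping'' is needed is the sentence asserting that the preimage $\tilde{G}\subset\tilde{\mathbf{G}}(\Q_2)$ of $G$ is a compact subgroup surjecting onto $G$. As stated this is not correct: $G$ need not lie in $\mathbf{G}^0(\Q_2)$ at all (component group of the Zariski closure), a central isogeny $\tilde{\mathbf{G}}\to\mathbf{G}^0$ is in general \emph{not} surjective on $\Q_2$-points (the cokernel injects into $H^1(\Q_2,\ker)$), and even the existence of that isogeny presupposes $\Lie(\mathbf{G}^0)=\g$, which needs the argument that $\g$ is an ideal of $\Lie(\mathbf{G}^0)$ (the closure lies in the normalizer of $\g$), whose complement centralizes $\g$, hence is scalar by absolute irreducibility and then zero because everything lies in $\sL(V)$. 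The finite-index repairs you gesture at do go through Lemma \ref{kerimage}, but its hypotheses must be checked for each quotient: the cokernel of the isogeny on rational points is finite \emph{abelian}, so Lemma \ref{kerimage}(i) applies, while for the component group one must argue it admits no surjection onto $\A_{2g}$ (e.g.\ via its composition factors, using the bound on the number of simple factors of $\g_L$ and the solvability of the relevant outer automorphism groups). This is exactly the point the paper outsources to \cite[Cor.\ 3.4(2)]{ZarhinMMJ}, and avoids on the envelope side by never leaving the adjoint representation; with these verifications supplied, your argument is complete, and part (ii) as you present it (integrating the faithful $\g_L$-module to $\tilde{\mathbf{G}}_L$ and killing the central kernel against the centerless $\A_{2g}$) matches the paper's final step.
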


\begin{proof}
Our plan is to apply Corollary \ref{noA} to a certain compact $2$-adic  Lie
group that will be obtained from $G$ in several steps. At every step we replace
the group either by the kernel of a homomorphism to a ``small"  finite group or
by the image of a homomorphism, whose kernel is a pro-$2$-group.

 Since $G \subset \SL(V)$, we have $\g \subset \sL(V)$.
 Clearly, $\g$ is reductive, its center is either $\{0\}$ or the scalars. Since
$\g\subset \sL(V)$, the center of $\g$ is $\{0\}$.  This implies that $\g$ is
{\sl semisimple}. Let $\r\subset \GL(V)$ be the connected semisimple linear
algebraic (sub)group over $\Q_2$, whose Lie algebra coincides with $\g$.

 Let us prove that the semisimple $\Q_2$-Lie algebra $\g$ is, in fact, absolutely simple.
Indeed, there exists a finite Galois field extension $L/\Q_2$ such that the
semisimple $L$-Lie algebra $\g_L=\g\otimes_{\Q_2}L$ is split; in particular,
$\g_L$ splits into a (finite) direct sum
$$\g_L=\oplus_{i\in I} \ g_i$$
of absolutely simple split $L$-Lie algebras $\g_i$. Here $I$ is the set of
minimal ideals $\g_i$ in $\g$. It is well-known that the $L$-vector space
$$V_L=V\otimes_{\Q_2}L$$
becomes an absolutely simple faithful $\g_L$-module and splits into a tensor
product
$$V_L=\otimes_{i\in I}\ W_i$$
of absolutely simple faithful $\g_i$-modules $W_i$. Since each $\g_i$ is simple
and $W_i$ is faithful,
$$\dim_L(W_i)\ge 2 \ \forall i\in I$$
and therefore
$$n:=\#(I)\le \log_2(\dim_L(V_L))=\log_2(\dim_{\Q_2}(V))=\log_2(2g)<g.$$
Let us consider the adjoint representation
$$\Ad: G \to \Aut(\g)\subset \Aut(\g_L).$$
Since the $\g$-module $V$ is absolutely simple, $\ker(\Ad)$ consists of
scalars. Since $G\subset \SL(V)$, the group $\ker(\Ad)$ is finite commutative.
Clearly, $G$ permutes elements of $I$ and therefore gives rise to the
continuous homomorphism (composition)
$$\pi_1:G \stackrel{\Ad}{\to} \Aut(\g)\subset \Aut(\g_L) \to \Perm(I) \cong \ST_n.$$
Let $G_1$ be the kernel of $\pi_1$: it is an open normal subgroup of finite
index in $G$ and therefore the Lie algebra of $G_1$ coincides with $\g$. Since
$n<g$,
$$\#(\ST_n)=n! < \frac{1}{2}(2g)!=\#(\A_{2g}).$$
It follows from Lemma \ref{kerimage} applied to $\pi^{\prime}=\pi_1$ and
$M=\ST_n$,
 that  $\pi(G_1)= \A_{2g}$. So, we may replace $G$ by $G_1$
and assume that $G$ leaves stable every $g_i$. This means that the image of
$$G \stackrel{\Ad}{\to} \Aut(\g_L)=\Aut(\oplus_{i\in I}\g_i)$$
lies in $\prod_{i\in I}\Aut(\g_i)$.

We write $\r_L\subset \GL(V_L)$ for the connected semisimple linear algebraic
(sub)group over $L$ obtained from $\r$ by extension of scalars. Clearly, the
$L$-Lie algebra of $\r_L$ coincides with $\g_L$. We write $\r_i$ for the
simply-connected absolutely simple split $L$-algebraic group, whose Lie algebra
coincides with $\g_i$ \cite{Springer}. We write $\r^{\Ad}\subset \GL(\g_L)$ for
the adjoint group of $\r$ and $\Ad_{\r}$ for the corresponding central isogeny
$\r \to \r^{\Ad}$. If $\r_i^{\Ad}\subset \GL(\g_i)$ is the adjoint group of
$\r_i$ then
$$\r^{\Ad}=\prod_{i\in I} \ \r_i^{\Ad}.$$
Since $g_i$ is simple split,  the group $\r_i^{\Ad}(L)$ is a closed (in the
$2$-adic topology) normal subgroup in $\Aut(\g_i)$ of index $1$, $2$ or $6$
\cite[Ch. 9, Sect. 4, Th. 4 and Remark on p. 281]{Jackobson}. Let us consider
the composition
$$G \stackrel{\Ad}{\to} \prod_{i\in I}\Aut(\g_i) \twoheadrightarrow \prod_{i\in I}\
\Aut(\g_i)/\r_i^{\Ad}(L).$$ Its image is a finite solvable group. Let $G_2$ be
its kernel: it is an open normal subgroup of finite index in $G$ and the Lie
algebra of $G_1$ coincides with $\g$. It follows from Lemma \ref{kerimage}
applied to $\pi^{\prime}=\pi_2$  that  $\pi(G_2) = \A_{2g}$. So, we may replace
$G$ by $G_2$ and assume that the image  of $G \stackrel{\Ad}{\to} \prod_{i\in
I}\ \Aut(\g_i)$ lies in $\prod_{i\in I}\ \r_i^{\Ad}(L)$. So, we get the
continuous group homomorphism
$$\gamma:G \stackrel{\Ad}{\twoheadrightarrow}  \prod_{i\in I}\ \r_i^{\Ad}(L)=\r^{\Ad}(L).$$
Recall that $\ker(\Ad)$ is finite commutative. This implies that $\ker(\gamma)$
is a finite commutative group. On the other hand, let us consider the canonical
central isogeny of semisimple $L$-algebraic groups
$$\alpha:=\Ad_{\r}:\r=\prod_{i\in I} \r_i \to \prod_{i\in I} \r_i^{\Ad}=\r^{\Ad}.$$
Applying \cite[Corollary 3.4(2) on p. 409]{ZarhinMMJ}, we conclude that there
exists a compact subgroup $G_3\subset \r(L)=\prod_{i\in I} \r_i(L)$ and a
surjective continuous homomorphism $\pi_3:G_3 \twoheadrightarrow \A_{2g}$,
whose kernel $H$ is an open normal subgroup of finite index in $G_3$.
 Applying \cite[Prop. 3.3 on pp. 408--409]{ZarhinMMJ}\footnote{The $R_j$ in \cite[p. 409, line 3]{ZarhinMMJ} should be $S_j$.} to
$\ell=2,F=L,G=G_3, S_i=\r_i(L)$, we conclude that there exist $j \in I$ and a
compact subgroup $G_4 \subset \r_i(L)$ provided with surjective continuous
homomorphism $G_4 \twoheadrightarrow \A_{2g}$.

Let $W$ be a finite-dimensional $L$-vector space that carries the structure of
absolutely simple faithful $\g$-module. I claim that
$$\dim_{L}(W) \ge 2g-2.$$
Indeed, there exists a $L$-rational representation
$$\rho_W: \r_i \to \GL(W),$$
whose kernel is a finite central subgroup. The composition
$$\pi_4:G_4 \subset \r_i(L) \stackrel{\rho_W}{\to} \Aut_L(W)$$
is a continuous group homomorphism, whose kernel is a finite central subgroup
of $G_4$. Clearly, the central subgroup $\ker(\pi_4)$ is killed by the
surjective homomorphism $G_4 \twoheadrightarrow \A_{2g}$. So, if we put
$G_5=\pi_4(G_4)\subset \Aut_L(W)$ then $G_5$ is a compact subgroup that admits
a surjective continuous homomorphism $G_5 \twoheadrightarrow \A_{2g}$. It
follows from Corollary \ref{noA} that
$$\dim_L(W) \ge 2g-2.$$
In particular,  $\dim_L(V_j)\ge 2g-2$. On the other hand,
$$2g=\dim_{\Q_2}(V)=\dim_L(V_L)=\prod_{i\in I} \dim_L(V_i)=$$
$$\dim_L(V_j)\prod_{i\in I, i\ne j} \dim_L(V_i)\ge (2g-2)2^{\#(I)-1}.$$ It
follows that $\#(I)=1$, i.e., $I=\{j\}$ and $\g_L=\g_j$. This means that $\g_L$
is an absolutely simple $L$-Lie algebra and therefore $\g$ is an absolutely
simple $\Q_{2}$-Lie algebra.
\end{proof}

\begin{cor}
\label{cor0} Let $g \ge 5$ be an integer, $V$  a $2g$-dimensional vector space
over $\Q_{2}$ and
$$e:V \times V \to \Q_{2}$$
an alternating nondegenerate $\Q_2$-bilinear form. Let $G \subset
\Gp(V,e)\subset\Aut_{\Q_2}(V)$ be a compact $2$-adic Lie group that enjoys the
following properties:

\begin{itemize}
\item[(i)]
 For every open subgroup $G^{\prime}\subset G$ of finite
index the $G^{\prime}$-module $V$ is absolutely simple.

\item[(ii)] There exists a continuous surjective homomorphism $\pi:G
\twoheadrightarrow \A_{2g}$.
\end{itemize}

Let $G^0$ be the kernel of $\chi_e:G \to \Q_{\ell}^*$ and
$$\g:=\Lie(G^0)\subset \End_{\Q_2}(V).$$

Then:
\begin{itemize}
\item[(i)] There exists a continuous surjective homomorphism $\pi:G^{0}
\twoheadrightarrow \A_{2g}$.
        \item[(ii)] The $\Q_2$-Lie algebra $\g:=\Lie(G^0)$
is absolutely simple and the $\g$-module $V$ is absolutely simple. In addition,
$\g=\Lie(G)\bigcap \sp(V,e)$ and the $\g$-module $V$ is symplectic.
                  \item[(iii)] Either $\Lie(G)=\g$ or $\Lie(G)=\Q_2\I\oplus \g$.

          \item[(iv)] Let $L/\Q_2$ be a finite Galois field extension such that
the simple $L$-Lie algebra $\g_L=\g\otimes_{\Q_2}L$ is split. If $W$ is a
faithful simple $\g_L$-module of finite $L$-dimension then $\dim_{L}(W)\ge
2g-2$.
\end{itemize}
\end{cor}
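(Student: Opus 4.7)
The plan is to derive everything from Lemma \ref{groupal} and Theorem \ref{simpleLie}, together with one new ingredient: the passage from $G$ to the multiplier kernel $G^0$.

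First, for assertion (i), I would show that $\pi(G^0)=\A_{2g}$ as follows. The image $\chi_e(G)\subset\Q_2^{*}$ is compact, hence contained in $\Z_2^{*}$; since $\Z_2^{*}\cong\{\pm 1\}\times(1+4\Z_2)$ is a pro-$2$ group, the quotient $G/G^0\cong\chi_e(G)$ is pro-$2$. As $G^0$ is normal in $G$ and $\pi$ is surjective, $\pi(G^0)$ is a closed normal subgroup of the simple non-abelian group $\A_{2g}$, hence is either trivial or everything. Triviality would force $\pi$ to factor through the pro-$2$ quotient $G/G^0$, which admits no continuous surjection onto $\A_{2g}$; so $\pi(G^0)=\A_{2g}$, proving (i).

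Next, the hypothesis on open finite-index subgroups of $G$ is exactly the hypothesis of Lemma \ref{groupal} applied to $G\subset\Gp(V,e)$. That lemma produces a semisimple $\Q_2$-Lie algebra $\g^{ss}\subset\End_{\Q_2}(V)$ satisfying: $\Lie(G)$ is either $\g^{ss}$ or $\Q_2\I\oplus\g^{ss}$; the $\g^{ss}$-module $V$ is absolutely simple; and $\Lie(G^0)=\g^{ss}=\Lie(G)\cap\sp(V,e)$. Identifying $\g=\Lie(G^0)=\g^{ss}$ yields (iii) together with the absolute simplicity of $V$ as a $\g$-module and the identity $\g=\Lie(G)\cap\sp(V,e)$ of (ii). The remaining claim in (ii), that $V$ is a symplectic $\g$-module, follows at once from $\g\subset\sp(V,e)$. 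Finally, the subgroup $G^0\subset\Sp(V,e)\subset\SL(V)$ is a compact $2$-adic Lie group whose Lie algebra $\g$ acts absolutely simply on $V$ and which admits a continuous surjection onto $\A_{2g}$ by the first step. This is precisely the setup of Theorem \ref{simpleLie} applied to $G^0$, and that theorem delivers both the absolute simplicity of $\g$ claimed in (ii) and the dimension bound of (iv).

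The only genuine content beyond routine invocation of the earlier lemmas is the first step: one must rule out that $\pi$ kills $G^0$. The key observation is that the multiplier quotient $G/G^0$ is actually pro-$2$ (not merely pro-solvable), thanks to the $\ell=2$ specifics of $\Z_2^{*}$, which makes the incompatibility with the non-abelian simple $\A_{2g}$ immediate. Without this shortcut, one would have to argue through the descending sequence of open finite-index subgroups $\chi_e^{-1}(1+2^n\Z_2)\cap G$ and invoke Lemma \ref{kerimage}(i) at each level before passing to the intersection.
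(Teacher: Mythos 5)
Your proposal is correct and follows essentially the same route as the paper: the same normality-plus-factoring argument for (i), then Lemma \ref{groupal} for (ii)--(iii) except absolute simplicity, and Theorem \ref{simpleLie} applied to $G^{0}$ for the absolute simplicity of $\g$ and the bound in (iv). The only (harmless) difference is in step (i), where the paper gets the contradiction more cheaply: $G/G^{0}\cong\chi_e(G)\subset\Q_2^{*}$ is commutative while $\A_{2g}$ is not, so there is no need to invoke the pro-$2$ structure of $\Z_2^{*}$.
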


\begin{proof}
In order to prove (i), notice that $\pi(G_0)$ is a normal subgroup in
$\A_{2g}$, i.e., $\pi(G_0)$ is either $\A_{2g}$ or $\{1\}$. In the former case
we are done, so let us assume that $\pi(G_0)=\{1\}$. This means that $\pi$
kills $G_0$ and therefore induces a surjective homomorphism $G/G_0
\twoheadrightarrow \A_{2g}$. On the other hand, $G/G_0$ is isomorphic to the
subgroup $\chi_e(G)$ of $\Q_{\ell}^{*}$ and therefore is commutative. It
follows that $\A_{2g}$ is also commutative, which is not the case. This
contradiction proves (i).

 Lemma \ref{groupal} implies all the assertions in (ii) and (iii) except the
 absolute simplicity of $\g^{ss}=\g$; however, it implies that $\g$ is
 semisimple and that the $\g$-module $V$ is absolutely simple. Now the the
 absolute simplicity of $\g$ follows from Theorem \ref{simpleLie} applied to
 $G^0$. The assertion (iv) also follows from Theorem \ref{simpleLie} applied to
 $G^0$.
\end{proof}

We refer to \cite[Ch. VIII, Sect. 7.3]{Bourbaki} for the notion and basic
properties of {\sl minuscule} weights. (See also \cite{SerreHT} and
\cite{ZarhinW}.)

\begin{cor}
\label{LieSP} We keep all notation and assumption of  Corollary \ref{cor0}.
 Let $L/\Q_2$ be a finite Galois field extension such that $\g_L$ is split.
Assume also that $\g_L$ is a classical simple Lie algebra, the $\g_L$-module
$V_L=V\otimes_{\Q_2}L$ is fundamental and its highest weight is a minuscule
weight.

 Then $\g=\sp(V,e)$. In particular, $\g$ is an absolutely simple $\Q_2$-Lie algebra of of type $\CC_g$.
\end{cor}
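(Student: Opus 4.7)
The plan is to pin down the type of the split simple Lie algebra $\g_L$ using the classification of minuscule fundamental representations of classical simple Lie algebras, in combination with three constraints inherited from Corollary \ref{cor0}: (a) $\dim_L V_L = 2g$; (b) the $\g_L$-module $V_L$ is absolutely simple and carries a nondegenerate $\g_L$-invariant alternating form, the scalar extension of $e$, so $V_L$ is symplectic self-dual; and (c) every faithful simple $\g_L$-module has $L$-dimension at least $2g-2$. Since $g \ge 5$, the bound $2g-2 \ge 8$ will be quite restrictive.

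I would first recall the list of minuscule fundamentals of the classical split simple Lie algebras: for $\AA_n$, the exterior powers $\wedge^k V_0$ of the defining $(n+1)$-dimensional module, of dimension $\binom{n+1}{k}$; for $\BB_n$, the spin representation of dimension $2^n$; for $\CC_n$, the defining $(2n)$-dimensional module; and for $\DD_n$, the defining $(2n)$-dimensional module together with the two half-spin representations of dimension $2^{n-1}$. The minimal dimension of a faithful simple module for these Lie algebras is respectively $n+1$, $2n+1$ (once $n \ge 3$), $2n$, and $2n$ (once $n \ge 5$).

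Next I would run the case analysis. For $\AA_n$, (b) forces self-duality and hence $n+1 = 2k$; then (a) gives $\binom{2k}{k} = 2g$ and (c) gives $2k \ge \binom{2k}{k} - 2$, so $\binom{2k}{k} \le 2k+2$, which leaves only $k \le 2$ and hence $g \le 3$, excluded. For $\BB_n$, (a) gives $2^n = 2g$, so $n \ge 4$; then (c) requires $2n+1 \ge 2^n-2$, which fails for every $n \ge 4$. For $\DD_n$ with $V_L$ the defining module, that module is orthogonal, contradicting (b); with $V_L$ a half-spin, (a) and (c) give $2^{n-1} = 2g$ and $2n \ge 2^{n-1}-2$, forcing $n \le 4$ and hence $g \le 4$, excluded. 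Only $\CC_n$ with $V_L$ the defining module survives, and this forces $n = g$; since any two nondegenerate alternating forms on the $(2g)$-dimensional $V_L$ are equivalent, we get $\g_L = \sp(V_L, e_L)$.

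Finally, comparing $\Q_2$-dimensions in the inclusion $\g \subset \sp(V,e)$ of Corollary \ref{cor0}(ii), together with $\dim_{\Q_2}\g = \dim_L \g_L = \dim_L \sp(V_L, e_L) = \dim_{\Q_2}\sp(V,e)$, yields $\g = \sp(V,e)$, an absolutely simple $\Q_2$-Lie algebra of type $\CC_g$, as claimed. The main obstacle is nothing deep, just careful bookkeeping of the dimensions and self-duality types (symplectic versus orthogonal) of the classical minuscule representations; the dimension bound from Corollary \ref{cor0}(iv) does all the real work in eliminating the ``exotic'' cases $\BB_n$-spin and $\DD_n$-half-spin, where the ambient Lie algebra is small compared to its exponentially large minuscule representation.
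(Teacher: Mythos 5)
Your proposal is correct and follows essentially the same route as the paper: a type-by-type elimination of $\AA_r$, $\BB_r$, $\DD_r$ using the dimensions of the classical minuscule fundamentals, the symplectic self-duality of $V_L$, and the bound $\dim_L(W)\ge 2g-2$ from Corollary \ref{cor0}(iv), leaving only type $\CC_g$ and then concluding $\g=\sp(V,e)$ by comparing dimensions in the inclusion $\g\subset\sp(V,e)$. The only (cosmetic) difference is in the $\AA_r$ case, where you use self-duality to force the middle exterior power and then the $2g-2$ bound on the defining module, while the paper merely excludes $i=1,r$ and bounds $\dim\wedge^i W\ge\binom{r+1}{2}>2g$; both are fine.
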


\begin{proof}
In the course of the proof we will freely use Tables from \cite{Bourbaki}. It
follows from Corollary \ref{cor0} and Theorem \ref{simpleLie} that if $W$ is a
faithful simple $\g_L$-module of finite $L$-dimension then $\dim_{L}(W)\ge
2g-2$.

Extending the form $e$ by $L$-linearity to $V_L$, we obtain the alternating
nondegenerate $L$-bilinear form
$$e_L:V_L \times V_L \to L.$$
Clearly,
$$\g_L \subset \sp(V_L,e_L);$$
in particular, $V_L$ is the symplectic $\g_L$-module.

Let $r$ be the rank of the absolutely simple classical $L$-Lie algebra $\g_L$.

If $\g_L$ is of type $\AA_r$  then there exists a $(r+1)$-dimensional
$L$-vector
 space $W$ such that $\g_L\cong\sL(W)$ and the fundamental $\g_L$-module $V_L$ is
 isomorphic to $\wedge^i_{L}(W)$ for suitable $i$ with $1\le i \le r$.
 However,
 $$r+1=\dim_{L}(W)\ge 2g-2 \ge 8;$$
 in particular, $r \ge 8$. Since $V_L$ is symplectic,
 $$2 \le i \le r-1.$$
 We have
 $$2g=\dim_{\Q_2}(V)=\dim_L(V_L))=\dim_L(\wedge^i_{L}(W))\ge
 \dim_L(\wedge^2_{L}(W)= \frac{(r+1)r}{2}\ge $$
 $$4(r+1)>(r+1)+4>(2g-2)+3>2g.$$
 The obtained contradiction proves that $\g$ is {\sl not} of type $\AA_r$.

 If $\g_L$ is of type $\BB_r$ then there exists a $(2r+1)$-dimensional $L$-vector
 space $W$ such that $W$ is an (orthogonal) absolutely simple faithful $\g_L$-module and
 every absolutely simple $\g_L$-module
 with minuscule highest weight has dimension  $2^r$. Hence
 $$\dim_L(V_L)=2^{r}.$$
 We have
 $$2g=\dim_L(V_L)=2^{r}, \ 2r+1=\dim_L(W)\ge 2g-2.$$
 This implies that $g=2^{r-1}$ and therefore $r> 3$, since $g\ge 5$. We also have
 $2r+1=\dim_L(W)\ge 2g-2= 2^r-2$, which implies that $2r+1 \ge 2^r-2$. It
 follows that $r \le 3$ and therefore $g=2^{r-1}\le 4$, which could not be the
 case. The obtained contradiction proves that $\g$ is {\sl not} of type $\BB_r$.

 If $\g_L$ is of type $\CC_r$ then the only absolutely simple (symplectic) $\g_L$-module,
 with minuscule highest weight has dimension $2r$. This implies that $\dim_L(V_L)=2r$
 and therefore
 $\dim_L(\g_L)=\dim_L(\sp(V_L,e_L)).$ Since $\g_L\subset\sp(V_L,e_L)$, we
 conclude that
 $$\g_L=\sp(V_L,e_L)\subset \End_L(V_L).$$
 Now the dimension arguments imply that $\g=\sp(V,e)$.

 If $\g_L$ is of type $\DD_r$ (with $r\ge 3$) then there exists a $2r$-dimensional $L$-vector
 space $W$ such that $W$ is an (orthogonal) absolutely simple faithful $\g_L$-module; on
 the other hand,
 every absolutely simple symplectic $\g_L$-module,
 with minuscule highest weight must have dimension $2^{r-1}$. This implies that
 $$\dim_L(V_L)=2^{r-1}.$$
 We have
 $2g=\dim_L(V_L)=2^{r-1}$ and therefore $g=2^{r-2}$. Since $g \ge 5$ we have
 $r\ge 5$. We have
 $$ 2r=\dim_L(W)\ge 2g-2=2^{r-1}-2$$ and therefore $2r \ge 2^{r-1}-2$, which
 could not be the case, since $r\ge 5$. The obtained contradiction proves that $\g$ is {\sl not} of type $\DD_r$.
\end{proof}

\section{Abelian varieties without nontrivial endomorphisms}
\label{monodromy}
 We use the notation of Sections \ref{AV}   and \ref{tate}.
 Let $F$ be a field and $\ell$ a prime different from $\fchar(F)$. As usual, we
write $\Z_{\ell}(1)$ for the projective limit of the cyclic multiplicative
groups
$$\mu_{\ell^i}:=\{z\in \bar{F}^{*}\mid z^{\ell^i}=1\}.$$
The group $\Z_{\ell}(1)$ carries the natural structure of free
$\Z_{\ell}$-module of rank $1$. It also carries the natural structure of the
Galois module provided by the $\ell$-adic {\sl cyclotomic} character
$$\chi_{\ell}:\Gal(K)\to \Z_{\ell}^{*}=\Aut_{\Z_{\ell}}(\Z_{\ell}(1)).$$
Let us fix a (non-canonical) isomorphism of  $\Z_{\ell}$-modules
$$\Z_{\ell}(1)\cong \Z_{\ell}.$$

Let $X$ be an abelian variety of positive dimension over $F$ and let $\lambda$
be a  polarization on $X$ that is defined over $F$. Then $\lambda$ gives rise
to a Riemann form \cite[Sect. 20]{MumfordAV}
$$e_{\lambda}:T_{\ell}(X)\times T_{\ell}(X)\to \Z_{\ell}(1)\cong \Z_{\ell};$$
$e_{\lambda}$ is a nondegenerate $\Gal(K)$-equivariant alternating
$\Z_{\ell}$-bilinear form. Here the equivariance means that
$$e_{\lambda}(\sigma(x),\sigma(y))=\chi_{\ell}(\sigma)\cdot e_{\lambda}(x,y) \
\forall \sigma \in \Gal(K).$$ Extending $e_{\lambda}$ by $\Q_{\ell}$-linearity,
we obtain a nondegenerate $\Gal(K)$-equivariant alternating
$\Q_{\ell}$-bilinear form
$$e_{\lambda}:V_{\ell}(X)\times V_{\ell}(X)\to \Q_{\ell}$$
such that
$$e_{\lambda}(\sigma(x),\sigma(y))=\chi_{\ell}(\sigma)\cdot e_{\lambda}(x,y) \
\forall x,y\in V_{\ell}(X), \sigma \in \Gal(K).$$ This implies that
$$G_{\ell,X}=G_{{\ell},X,F}=\rho_{\ell,X}(\Gal(K)) \subset \Gp(V_{\ell}(X),e_{\lambda}).$$
We write $\g_{\ell,X}\subset \End_{\Q_{\ell}}(V_{\ell}(X))$ for the
$\Q_{\ell}$-Lie algebra of the compact $\ell$-adic Lie group $G_{\ell,X,F}$.
Clearly,
$$\g_{\ell,X}\subset \Lie(\Gp(V_{\ell}(X),e_{\lambda}))=\Q_{\ell}\I\oplus
\sp(V_{\ell}(X),e_{\lambda}).$$

\begin{rem}
\label{selfproduct} Let $m$ be a positive integer and $X^m$ is the $m$th power
(self-product) of $X$. Then
$$V_{\ell}(X^m)=\bigoplus_{j=1}^{m}V_{\ell}(X)=:V_{\ell}(X)^m$$
(the equality of Galois modules) and
$$\g_{\ell,X^m}=\g_{\ell,X} \subset \End_{\Q_{\ell}}(V_{\ell}(X))\subset
\M_m(\End_{\Q_{\ell}}(V_{\ell}(X)))= \End_{\Q_{\ell}}(V_{\ell}(X)^m)$$ (the
last embedding is the diagonal one). The polarization $\lambda$ gives rise to
the polarization $\lambda^m$ on $X^m$ \cite[Sect. 1.12, pp. 320--321]{ZarhinG}
such that the corresponding Riemann form
$$e_{\lambda^m}:V_{\ell}(X)^m\times V_{\ell}(X)^m\to \Q_{\ell}$$
is the direct orthogonal sum of $m$ copies of $e_{\lambda}$. Clearly,
$e_{\lambda^m}$ is $\sp(V_{\ell}(X),e_{\lambda})$-invariant, i.e.,
$$e_{\lambda^m}(ux,y)+e_{\lambda^m}(x,uy)=0 \quad \forall u\in
\sp(V_{\ell}(X),e_{\lambda}); \ x,y \in V_{\ell}(X)^m.$$ It is also clear that
every alternating $\sp(V_{\ell}(X),e_{\lambda})$-invariant bilinear form on
$V_{\ell}(X)^m$ is of the form $e_{\lambda^m}(Bx,y)$ where
$$B \in \M_m(\Q_{\ell}) \subset
\M_m(\End_{\Q_{\ell}}(V_{\ell}(X)))=\End_{\Q_{\ell}}(V_{\ell}(X)^m)$$ is a {\sl
symmetric} square matrix of size $m$ with entries from $\Q_{\ell}$.
\end{rem}

\begin{thm}
\label{Zchar0}
 Suppose that  $F$ is a field that is finitely generated over
$\Q$. Suppose that $X$ is an abelian variety of positive dimension $g$ over
$F$. Assume additionally that $X$ enjoys the following properties:

\begin{itemize}
\item $\End(X)=\Z$.

\item $\tilde{G}_{2,X,F}$ is isomorphic either to $\ST_{2g}$ or to $\A_{2g}$.

\item $g \ge 5$.
\end{itemize}

Then $\g_{\ell,X}=\Q_{\ell}\I\oplus \sp(V_{\ell}(X),e_{\lambda})$ for all
$\ell$. In particular, $G_{{\ell},X,F}$ is an open subgroup in
$\Gp(V_{\ell}(X),e_{\lambda})$.
\end{thm}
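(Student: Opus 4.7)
The strategy is to carry out the analysis at $\ell=2$ first—where the hypothesis on $2$-torsion supplies a surjection onto $\A_{2g}$ and the results of Section \ref{minuscule} directly apply—and then transfer the conclusion to all other primes via the independence-of-$\ell$ properties of the algebraic monodromy group. First, using Lemma \ref{goursat2} I replace $F$ by at most a quadratic extension so that $\tilde{G}_{2,X,F}=\A_{2g}$; this move sends $G_{\ell,X,F}$ to an open subgroup of finite index, so it does not affect openness in $\Gp$, and it preserves $\End(X)=\Z$. By Faltings' theorem for abelian varieties over finitely generated extensions of $\Q$,
$$\End_{G_{\ell,X,F''}}(V_\ell(X))=\End_{F''}(X)\otimes\Q_\ell=\Q_\ell$$
for every finite extension $F''/F$ and every prime $\ell$, so $V_\ell(X)$ is absolutely simple over every open subgroup of finite index of $G_{\ell,X,F}$.

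With these preparations in hand, Corollary \ref{cor0} applied to $G_{2,X,F}\subset\Gp(V_2(X),e_\lambda)$ yields that $\g_2^{ss}:=\Lie(G^{0}_{2,X,F})$ is an absolutely simple $\Q_2$-Lie subalgebra of $\sp(V_2(X),e_\lambda)$, that $V_2(X)$ is an absolutely irreducible symplectic $\g_2^{ss}$-module, and that $\dim_L W\ge 2g-2$ for every faithful simple $\g_L$-module over a splitting extension $L/\Q_2$. To invoke Corollary \ref{LieSP} I need $V_L$ to be fundamental minuscule of classical type. The minuscule property follows from Faltings' Hodge--Tate decomposition of $V_\ell(X)$, whose weights are $\{0,1\}$ each of multiplicity $g$ (Pink's $\ell$-adic reformulation of Mumford--Tate theory): all weights of $\g_L$ on $V_L$ lie in a single Weyl orbit, so $V_L$ is fundamental minuscule. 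The classification of simple Lie algebras admitting a minuscule symplectic irreducible representation leaves only the classical types plus $E_7$ at $2g=56$; the $E_7$ case is excluded by a cardinality argument, since a surjection $G^{0}_{2,X,F}\twoheadrightarrow\A_{56}$ must factor through reduction modulo $2$ (its kernel on the pro-$2$ part is trivial, because $\A_{56}$ is simple non-abelian), producing a subgroup of $E_7(\F_2)$ that surjects onto $\A_{56}$—impossible since $|E_7(\F_2)|<|\A_{56}|$. Hence $\g_L$ is classical, and Corollary \ref{LieSP} identifies $\g_2^{ss}$ with $\sp(V_2(X),e_\lambda)$.

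The remaining tasks are to add the scalar center and to pass to all primes. The symplectic multiplier of $\rho_{\ell,X}$ is the cyclotomic character $\chi_\ell:\Gal(F)\to\Z_\ell^*$, whose image is open because $F$ is finitely generated over $\Q$. Consequently the induced Lie-algebra map $\Lie(G_{\ell,X,F})\to\Q_\ell$ is surjective, forcing $\Q_\ell\I\subset\Lie(G_{\ell,X,F})$, and combined with the form of $\Lie(G_{\ell,X,F})$ supplied by Lemma \ref{groupal} this gives $\Lie(G_{\ell,X,F})=\Q_\ell\I\oplus\g_\ell^{ss}$. For $\ell=2$ this already yields the theorem. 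To extend to all $\ell$, I appeal to Serre's independence-of-$\ell$ results: $\dim_{\Q_\ell}\g_\ell^{ss}$ is constant in $\ell$, so equals $\dim_{\Q_2}\sp(V_2(X),e_\lambda)$; combined with the inclusion $\g_\ell^{ss}\subset\sp(V_\ell(X),e_\lambda)$ (Lemma \ref{groupal}), equality of dimensions forces equality of Lie algebras. Openness of $G_{\ell,X,F}$ in $\Gp(V_\ell(X),e_\lambda)$ then follows from the equality of Lie algebras of two compact $\ell$-adic Lie groups.

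The decisive step is identifying $\g_2^{ss}$ with the symplectic Lie algebra; this combines two nontrivial inputs—the minuscule-weights property of the $\ell$-adic representation of an abelian variety (Pink, using Faltings' Hodge--Tate theorem) and the exclusion of the exceptional type $E_7$ at $g=28$ (not covered by Corollary \ref{LieSP} and handled by the order-counting argument above). The subsequent transfer from $\ell=2$ to arbitrary $\ell$ invokes Serre's independence-of-$\ell$ theorems but is largely bookkeeping within the compact Lie-group framework already in place.
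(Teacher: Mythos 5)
Your skeleton is the same as the paper's: reduce to $\tilde{G}_{2,X,F}=\A_{2g}$ via Lemma \ref{goursat2}, use Faltings to get absolute simplicity of $V_\ell(X)$ over every open subgroup, apply Lemma \ref{groupal}, Corollary \ref{cor0} and Corollary \ref{LieSP} at $\ell=2$, then transfer to all $\ell$. (Your replacement of Bogomolov's theorem by openness of the image of the cyclotomic multiplier is fine and harmless.) The genuine gap is in how you reach the hypotheses of Corollary \ref{LieSP}. The paper quotes Pink's theorem in the form ``all simple factors of $\g_L$ are \emph{classical} and the highest weight of $V_L$ is minuscule,'' so no exceptional case ever arises. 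You instead take from Pink only the minuscule property and then must exclude type $E_7$ when $2g=56$, and your exclusion argument does not work as written. A compact subgroup $G^0\subset\Aut_{\Q_2}(V_2(X))$ stabilizes a $\Z_2$-lattice and its reduction modulo $2$ lands only in $\GL_{56}(\F_2)$, not in $E_7(\F_2)$; nothing in your setup produces an integral $E_7$-structure preserved by $G^0$. And the naive reduction gives no contradiction: the Feit--Tits/Wagner bound used throughout the paper (Remark \ref{Am9}, Corollary \ref{noA}) only forces $d\ge m-2=54$, which $d=56$ satisfies --- this is precisely why the dimension estimates of Corollary \ref{cor0}(iv) cannot kill $E_7$ and why the paper leans on Pink's classicality. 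To salvage your cardinality argument you would need Bruhat--Tits theory (every compact subgroup of the $\Q_2$-points of the $E_7$-form lies in a parahoric, whose reductive quotient over the residue field $\F_2$ has dimension at most $133$, hence order far below $\#\A_{56}$), none of which appears in your write-up; since you already cite Pink for the minuscule property, the honest repair is simply to cite the same theorem for classicality, as the paper does.

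A secondary weak point is the passage from $\ell=2$ to arbitrary $\ell$. You invoke ``Serre's independence-of-$\ell$: $\dim_{\Q_\ell}\g_\ell^{ss}$ is constant in $\ell$.'' What is standard for abelian varieties over finitely generated fields is independence of the \emph{rank} (and of the component group); independence of the full dimension for every $\ell$ is not an off-the-shelf statement one can cite in this generality. The paper avoids this by quoting Lemma 8.2 of \cite{ZarhinMMJ}, which performs exactly this transfer once $\g_{2,X}=\Q_2\I\oplus\sp(V_2(X),e_\lambda)$ is known (one could alternatively use Pink's result that maximality of the monodromy at one prime propagates to all primes). Your argument would be complete after replacing the $E_7$ step by the full statement of Pink's theorem and the $\ell$-transfer by one of these precise references.
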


\begin{proof}
The openness property follows from the coincidence of the corresponding
$\Q_{\ell}$-Lie algebras. So, it is suffices to check that
$\g_{\ell,X}=\Q_{\ell}\I\oplus \sp(V_{\ell}(X),e_{\lambda})$.

 Replacing if necessary, $F$ by its suitable quadratic
extension, we may and will assume that $\tilde{G}_{2,X,F}=\A_{2g}$.

 By a theorem of Bogomolov \cite{Bogomolov1,Bogomolov2},
$$\g_{\ell,X}\supset \Q_{\ell}\I.$$
 By a theorem of Faltings \cite{F1,F2} for every finite algebraic extension
$F_1$ of $F$ the $\Gal(F_1)$-module $V_{\ell}$ is semisimple and
$$\End_{\Gal(F_1)}(V_{\ell}(X))=\End_{F_1}(X)\otimes\Q_{\ell}=\Z\otimes\Q_{\ell}=\Q_{\ell}.$$
In other words, the $\Gal(F_1)$-module $V_{\ell}$ is absolutely simple. Notice
that $\rho_{\ell,X}(\Gal(F_1))$ is an open subgroup of finite index in
$G_{{\ell},X,F}$. Conversely, every  open subgroup of finite index in
$G_{{\ell},X,F}$ coincides with $\rho_{\ell,X}(\Gal(F_1))$ for some finite
algebraic extension $F_1$ of $F$. It follows that the $\Q_{\ell}$-Lie algebra
$\g_{\ell,X}$ is reductive and the $\g_{\ell,X}$-module $V_{\ell}(X)$ is
absolutely simple, i.e.,
$$\End_{\g_{\ell,X}}(V_{\ell}(X))=\Q_{\ell}.$$
It follows from Lemma \ref{groupal} that $\g_{\ell,X}=\Q_{\ell}\I\oplus
\g^{ss}_{\ell}$ where $\g^{ss}_{\ell}$ is a semisimple $\Q_{\ell}$-Lie algebra
such that
$$\g^{ss}_{\ell}\subset \sp(V_{\ell}(X),e_{\lambda})$$ and the
$\g^{ss}_{\ell}$-module $V_{\ell}(X)$ is absolutely simple.
 Pick a finite algebraic field extension
$L/\Q_{\ell}$ such that the semisimple $L$-Lie algebra
$\g=\g^{ss}_{\ell}\otimes_{\Q_{\ell}} L$ splits. By a theorem of Pink
\cite{Pink}, all simple factors of $\g$ are classical Lie algebras and the
highest weight of the simple $\g$-module $V_{\ell}(X)\otimes_{\Q_{\ell}} L$ is
minuscule.

Now let us consider the case of $\ell=2$. Applying Corollary \ref{cor0}, we
conclude that $\g^{ss}_{2}$ is an absolutely simple $\Q_2$-Lie algebra and
therefore $\g$ is an absolutely simple $L$-Lie algebra. It follows from the
theorem of Pink that $\g$ is a classical simple Lie algebra and the highest
weight of the simple $\g$-module $V_{2}(X)\otimes_{\Q_{2}}L$ is  minuscule.
Applying Corollary \ref{LieSP}, we conclude that
$\g^{ss}_{2}=\sp(V_2(X),e_{\lambda})$ and $\g_{2,X}=\Q_{2}\I\oplus
\sp(V_{2}(X),e_{\lambda})$.

Now the case of arbitrary $\ell$ follows from Lemma 8.2 in \cite{ZarhinMMJ}.
\end{proof}

\begin{thm}
\label{MON} Suppose that $K$ is a field that is finitely generated over $\Q$.
Suppose that $f(x)\in K[x]$ is a polynomial of  degree $n\ge 10$ such that
 $f(x)=(x-t)h(x)$ with $t \in K$ and $h(x) \in
K[x]$. Suppose that $\Gal(h)$ is either the full symmetric group $\ST_{n-1}$ or
the alternating group $\A_{n-1}$. Let  $C_f$ be the hyperelliptic curve
$y^2=f(x)$, let $J(C_f)$ be its jacobian and $\lambda$ the principal
polarization on  $J(C_f)$ attached to the theta divisor. Then for all primes
$\ell$
$$\g_{\ell,J(C_f)}=\Q_{\ell}\I\oplus \sp(V_{\ell}(J(C_f)),e_{\lambda})$$
and the group $G_{\ell,J(C_f),K}$ is an open subgroup in
$\Gp(V_{\ell}(X),e_{\lambda})$.
\end{thm}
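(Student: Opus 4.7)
The plan is to verify the hypotheses of Theorem~\ref{Zchar0} for $X=J(C_f)$, after a finite algebraic base extension if necessary. Such an extension does not disturb the Lie algebra of the image of Galois in $\Aut_{\Q_\ell}(V_\ell(X))$, so once the equality
$$\g_{\ell,J(C_f)}=\Q_{\ell}\I\oplus \sp(V_{\ell}(J(C_f)),e_{\lambda})$$
is established over the enlarged field, the asserted openness of $G_{\ell,J(C_f),K}$ in $\Gp(V_{\ell},e_{\lambda})$ is automatic, and the Lie algebra equality itself descends back to $K$.

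When $n=2g+1$ is odd (so $g\ge 5$), the verification is immediate. Theorem~\ref{oddZ} gives $\End(J(C_f))=\Z$, while Lemma~\ref{order2} identifies $J(C_f)_2$ with $\F_2^{\RR_h}$ and $\tilde G_{2,J(C_f),K}$ with $\Gal(h)$, which is $\ST_{2g}$ or $\A_{2g}$ acting by permutations. All the hypotheses of Theorem~\ref{Zchar0} are in place with $F=K$, and a direct application finishes this case.

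The even case $n=2g+2$ (with $g\ge 5$) is less direct. Theorem~\ref{even} gives $\End(J(C_f))=\Z$, and the birational $K$-isomorphism from its proof identifies $J(C_f)$ with $J(C_{h_2})$ for an irreducible polynomial $h_2\in K[x]$ of odd degree $2g+1$ with $\Gal(h_2)=\Gal(h)\cong\ST_{2g+1}$ or $\A_{2g+1}$. Here $\tilde G_{2,J(C_f),K}$ acts on the zero-sum hyperplane $(\F_2^{\RR_{h_2}})^0$ through a symmetry group \emph{larger} than $\ST_{2g}$, so Theorem~\ref{Zchar0} does not apply over $K$ itself. I would therefore pick any $\beta\in\RR_{h_2}$ and pass to the finitely generated field $K':=K(\beta)$, over which $h_2$ factors as $(x-\beta)h_2^{\dagger}$ with $h_2^{\dagger}\in K'[x]$ of degree $2g$; the Galois group $\Gal(h_2^{\dagger}/K')$ is the stabiliser of $\beta$ in $\Gal(h_2/K)$, hence $\A_{2g}$ or $\ST_{2g}$. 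Applying Lemma~\ref{order2} over $K'$ to $h_2=(x-\beta)h_2^{\dagger}$ realises $J(C_f)_2$ as $\F_2^{\RR_{h_2^{\dagger}}}$ with $\tilde G_{2,J(C_f),K'}\cong\A_{2g}$ or $\ST_{2g}$ in its permutation action, which is exactly the hypothesis of Theorem~\ref{Zchar0}. That theorem applied to $J(C_f)$ over $K'$ yields the Lie algebra equality over $K'$, and finiteness of $K'/K$ then gives it over $K$.

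The main delicate point is the even-degree reduction: one must notice that adjoining a single root of $h_2$ simultaneously (a)~introduces a rational ramification point making Lemma~\ref{order2} applicable and (b)~cuts the Galois group down from $\A_{2g+1}$ or $\ST_{2g+1}$ on a hyperplane to $\A_{2g}$ or $\ST_{2g}$ on the full permutation module — precisely the form in which Theorem~\ref{Zchar0} is stated. Once this reformulation is in place, all the heavy lifting (Bogomolov, Faltings, Pink, and the identification $\g^{ss}_2=\sp$ via Corollaries~\ref{cor0} and \ref{LieSP}) has already been packaged into Theorem~\ref{Zchar0}, and there is nothing more to prove.
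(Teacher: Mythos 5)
Your odd-degree case ($n\ge 11$) is exactly the paper's argument: Theorem \ref{oddZ} gives $\End(J(C_f))=\Z$, Lemma \ref{order2} gives $\tilde{G}_{2,J(C_f),K}\cong\Gal(h)$, and Theorem \ref{Zchar0} applies with $F=K$. Your even-degree reduction, however, has a genuine gap at $n=10$. For even $n$ the genus is $g=(n-2)/2$, so $n\ge 10$ only gives $g\ge 4$, not $g\ge 5$ as you assert when you write ``the even case $n=2g+2$ (with $g\ge 5$)''. Your scheme — pass to $C_{h_2}$ with $\deg h_2=2g+1$, adjoin one root $\beta$ so that over $K'=K(\beta)$ one has $h_2=(x-\beta)h_2^{\dagger}$ with $\Gal(h_2^{\dagger}/K')\cong \ST_{2g}$ or $\A_{2g}$, then invoke Lemma \ref{order2} and Theorem \ref{Zchar0} over $K'$ — is correct and complete for even $n\ge 12$, and the descent of the Lie algebra equality from $K'$ to $K$ is unproblematic since $\rho_{\ell,X}(\Gal(K'))$ has finite index in $\rho_{\ell,X}(\Gal(K))$. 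But for $n=10$ you would need Theorem \ref{Zchar0} with $g=4$, and that theorem (like the group-theoretic inputs behind it: Corollary \ref{noA}, Theorem \ref{simpleA2g}(ii), Corollaries \ref{cor0} and \ref{LieSP}) is only stated and proved for $g\ge 5$; it simply cannot be applied, so the statement of Theorem \ref{MON} for $n=10$ is left unproved by your proposal.

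The paper avoids this by treating the whole even case differently: after the birational isomorphism $C_f\cong C_{h_2}$, where $\deg h_2=n-1\ge 9$ and $\Gal(h_2)=\ST_{n-1}$ or $\A_{n-1}$, it quotes Theorem 2.4 of \cite{ZarhinMMJ}, the earlier big-monodromy result for hyperelliptic jacobians of $y^2=h_2(x)$ with full symmetric or alternating Galois group, which is valid already in degree $9$ (genus $4$) because it exploits the larger group $\ST_{n-1}/\A_{n-1}$ acting on the heart of the permutation module rather than the smaller stabiliser $\ST_{2g}/\A_{2g}$ acting on $\F_2^{B}$. Your reduction is a nice, more self-contained alternative for $n\ge 12$, but to cover $n=10$ you must either invoke that external theorem as the paper does, or supply a separate argument for genus $4$ that does not rest on Theorem \ref{Zchar0}.
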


\begin{proof}
As above, the openness property follows from the coincidence of the
corresponding Lie algebras. So, it is suffices to check that $\g_{\ell,J(C_f)}$
coincides with $\Q_{\ell}\I\oplus \sp(V_{\ell}(J(C_f)),e_{\lambda})$.

Suppose that $n$ is {\sl even}. Then as in the Proof of Theorem \ref{even}
(Sect. \ref{intro}), the curve $C_f$ is $K$-biregularly isomorphic to
$$C_{h_2}:y_1^2=h_2(x_1)$$
where $h_2(x_1)\in K[x_1]$ is a certain degree $(n-1)$ polynomial, whose Galois
group is either $\ST_{n-1}$ or $\A_{n-1}$ respectively. Since $n-1\ge 9$, the
assertion follows from Theorem 2.4 of \cite{ZarhinMMJ} applied to the
polynomial $h_2(x_1)$.

Suppose now that $n$ is {\sl odd} and therefore $n \ge 11$.
 We have $n-1=2g$ where $g$ is an integer that is greater or equal than
$5$. Replacing if necessary, $K$ by its suitable quadratic extension, we may
and will assume that $\Gal(h)=\A_{2g}$.  By Theorem \ref{oddZ},
$\End(J(C_f))=\Z$. By Lemma \ref{order2}, $\tilde{G}_{2,J(C_f),\ell}=\Gal(h)$
and therefore $\tilde{G}_{2,J(C_f),K}=\A_{2g}$. As we have seen in Section
\ref{tate}, there is a continuous surjective homomorphism
$$\pi_{2,J(C_f),K}:G_{2,J(C_f),K}\twoheadrightarrow
\tilde{G}_{2,J(C_f),K}=\A_{2g}.$$

Now the result follows from Theorem \ref{Zchar0} applied to $F=K$ and
$X=J(C_f)$.
\end{proof}

\section{Tate classes}
\label{tateC}

The aim of this and next Sections is  to use the classical invariant theory of
symplectic groups \cite{Howe} in order to deduce from results of the previous
Section the validity of certain important conjectures for  our $J(C_f)$ and its
self-products.

\begin{thm}
 Suppose that $K$ is a field that is finitely generated over $\Q$.
Suppose that $f(x)\in K[x]$ is a polynomial of  degree $n\ge 10$ such that
 $f(x)=(x-t)h(x)$ with $t \in K$ and $h(x) \in
K[x]$. Suppose that $\Gal(h)$ is either the full symmetric group $\ST_{n-1}$ or
the alternating group $\A_{n-1}$.

Let  $C_f$ be the hyperelliptic curve $y^2=f(x)$ and $J(C_f)$  its jacobian.
Let $K^{\prime}$ be a finite algebraic extension of $K$.

Then for all primes $\ell$ and on each self-product $J(C_f)^m$ of $J(C_f)$
every $\ell$-adic Tate class over $K^{\prime}$ can be presented as a linear
combination of products of divisor classes. In particular, the Tate conjecture
holds true for all $J(C_f)^m$.
\end{thm}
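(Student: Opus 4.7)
The plan is to translate the Tate conjecture for self-products $J(C_f)^m$ into an invariant-theoretic statement about symplectic groups, and then invoke classical invariant theory together with the openness result of Theorem \ref{MON}.

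First I would set $X = J(C_f)$ and $g = \dim(X) = \lfloor (n-1)/2 \rfloor$. By Theorem \ref{MON}, the image $G_{\ell, X, K}$ of $\Gal(K)$ in $\Aut_{\Q_\ell}(V_\ell(X))$ is an open subgroup of $\Gp(V_\ell(X), e_\lambda)$; replacing $K$ by the finite extension $K'$ only shrinks $G_{\ell, X, K'}$ to another open subgroup with the same Lie algebra $\Q_\ell \I \oplus \sp(V_\ell(X), e_\lambda)$. By Remark \ref{selfproduct}, on $X^m$ the Galois image acts diagonally on $V_\ell(X^m) = V_\ell(X)^m$ and preserves $e_{\lambda^m}$; its Zariski closure therefore contains the image of $\Gp(V_\ell(X), e_\lambda)$ acting diagonally.

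Next I would identify the space of Tate classes. Via the standard identification $H^1(X^m, \Q_\ell) \cong V_\ell(X^m)^* \otimes \Q_\ell(-1)$ and the K\"unneth formula, the Tate classes of codimension $i$ on $X^m$ over $K'$ are exactly the $G_{\ell, X, K'}$-invariants in
\[
 H^{2i}(X^m, \Q_\ell)(i) \subset \bigl(V_\ell(X)^m\bigr)^{\otimes 2i} \otimes \Q_\ell(i),
\]
where the Tate twist cancels the similitude character $\chi_\ell$. Since $G_{\ell, X, K'}$ is open in $\Gp(V_\ell(X), e_\lambda)$ and Tate twists absorb the similitude factor, these invariants coincide with the $\Sp(V_\ell(X), e_\lambda)$-invariants on the corresponding untwisted space.

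Then I would apply the first fundamental theorem of invariant theory for the symplectic group (Weyl; see also Howe \cite{Howe}): the algebra of $\Sp(V, e)$-invariants in the tensor algebra of $V^m$ is generated by the bilinear forms obtained from $e$ by pairing entries from any two (possibly equal) of the $m$ copies of $V$. Under the K\"unneth identification, each such ``elementary'' invariant pairing the $r$-th and $s$-th factors corresponds, up to a Tate twist, to the class in $H^2(X \times X, \Q_\ell)(1)$ of the polarization graph: the pullback of $\lambda$ when $r = s$, and a correspondence divisor (pullback of $\lambda$ under $\pi_r + \pi_s$ minus pullbacks under $\pi_r, \pi_s$) when $r \ne s$. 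These are all $\ell$-adic cohomology classes of divisors on $X^m$.

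Putting this together, every $\ell$-adic Tate class on $X^m$ over $K'$ is a polynomial in divisor classes, hence algebraic, establishing the Tate conjecture for $J(C_f)^m$. The main obstacle I expect is the careful bookkeeping in the last step: making sure that the symplectic invariants produced by the first fundamental theorem all correspond to genuine divisor classes on $X^m$ (rather than merely to cohomology classes in $H^2$), and that the passage from the open subgroup $G_{\ell, X, K'}$ to its Zariski closure $\Gp(V_\ell(X), e_\lambda)$ does not shrink the invariant space, which is where the openness conclusion of Theorem \ref{MON} is used in an essential way.
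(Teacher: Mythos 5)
Your proposal is correct and follows essentially the same route as the paper: reduce via the openness statement of Theorem \ref{MON} (which is insensitive to replacing $K$ by $K'$) to invariants of the symplectic Lie algebra/group on tensor powers of $V_{\ell}(J(C_f))^m$, invoke the classical invariant theory of the symplectic group (Howe's theorem, i.e.\ the first fundamental theorem), and identify the elementary bilinear invariants with divisor classes on the self-product, exactly as the paper does via Remark \ref{selfproduct}. No gaps worth noting.
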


\begin{proof}
 Recall \cite{Tate} that one may view Tate classes on
$J(C_f)^m$
 as tensor invariants of the Lie algebra $\g_{\ell,J(C_f)}\bigcap
 \sp(V_{\ell}(J(C_f),e_{\lambda})$ in
 $$V_{m,i}:=
\Hom_{\Q_{\ell}}(\wedge^{2i}_{\Q_{\ell}}(V_{\ell}(J(C_f))^m),\Q_{\ell}).$$
 By Theorem \ref{MON},
 $$\g_{\ell,J(C_f)}\bigcap
 \sp(V_{\ell}(J(C_f)),e_{\lambda})=$$
 $$[\Q_{\ell}\I\oplus
 \sp(V_{\ell}(J(C_f)),e_{\lambda})]\bigcap
 \sp(V_{\ell}(J(C_f)),e_{\lambda})=
 \sp(V_{\ell}(J(C_f)),e_{\lambda}).$$
The invariant theory for symplectic groups (\cite[Th. 2 on p. 543]{Howe},
\cite{Ribet2}; see also \cite{ZarhinEssen}) tells us that every
 $\sp(V_{\ell}(J(C_f)),e_{\lambda})$-invariant in $V_{m,i}$ could be
presented as a linear combination of exterior products of
$\sp(V_{\ell}(J(C_f)),e_{\lambda})$-invariants in $V_{m,1}$, i.e., of
$\sp(V_{\ell}(J(C_f)),e_{\lambda})$-invariant alternating bilinear forms on
$$V_{\ell}(J(C_f))^{m}=\bigoplus_{j=1}^{m} V_{\ell}(J(C_f)).$$
The description of those alternating invariant bilinear forms given in Remark
\ref{selfproduct} implies that they all are linear combinations of divisors
classes on $J(C_f))^{m}$ with coefficients in $\Q_{\ell}$. It follows that
  each  $\ell$-adic Tate class can be presented
 as a a linear combination of products of divisor classes and therefore is
 algebraic.
\end{proof}

\begin{rem}
In codimension $1$ the Tate conjecture for all abelian varieties over $K$ is
proven by Faltings \cite{F1,F2}.
\end{rem}

\section{Hodge classes}
\label{hodge}

Let $X$ be a complex abelian variety of positive dimension,  Let
$$V_{\Q}=V_{\Q}(X):=\H_1(X(\C),\Q)$$ be the first rational homology group of the complex
torus $X(\C)$ and let
$$e_{\lambda,\Q}:V_{\Q}(X)\times V_{\Q}(X) \to \Q$$
be the alternating nondegenerate $\Q$-bilinear (Riemann) form attached to a
polarization $\lambda$ on $X$. Let $\Sp(V_{\Q}(X),e_{\lambda,\Q})\subset
\GL(V_{\Q}(X))$ be the $\Q$-algebraic symplectic group  attached to
$e_{\lambda,\Q}$. We write $\sp((V_{\Q}(X),e_{\lambda,\Q})$ for the Lie algebra
of
 $\Sp(V_{\Q}(X),e_{\lambda,\Q})$: it is an absolutely simple absolutely irreducible
 $\Q$-Lie subalgebra of  $\End_{\Q}(V_{\Q}(X))$.
Let $\Gp(V_{\Q}(X),e_{\lambda,\Q})\subset \GL(V_{\Q}(X))$ be the (connected)
$\Q$-algebraic (sub)group of symplectic similitudes attached to
$e_{\lambda,\Q}$. We have
$$\Sp(V_{\Q}(X),e_{\lambda,\Q})\subset
\Gp(V_{\Q}(X),e_{\lambda,\Q}) \subset \GL(V_{\Q}(X)).$$ The Lie algebra of
$\Gp(V_{\Q}(X),e_{\lambda,\Q})$ coincides with $\Q\I\oplus
\sp((V_{\Q}(X),e_{\lambda,\Q})$; here $\I$ is the identity map on $V_{\Q}(X)$.

We refer to \cite{Ribet2} for the definition of the Mumford--Tate group
$\MT=\MT_X$ of $X$; it is a reductive connected $\Q$-algebraic subgroup of
$\Gp(V_{\Q}(X),e_{\lambda,\Q})$. We have
$$\MT_X\subset \Gp(V_{\Q}(X),e_{\lambda,\Q})\subset
\GL(V_{\Q}(X)).$$  We write $\mt_X$ for the Lie algebra of $\MT_X$: it is a
reductive algebraic $\Q$-Lie subalgebra of $\End_{\Q}(V_{\Q}(X))$. It is
well-known \cite{Ribet2} that
$$\Q\I\subset \mt_X\subset \Q\I\oplus\sp(V_{\Q}(X),e_{\lambda,\Q})\subset\End_{\Q}(V_{\Q}(X)).$$
We refer to \cite[Sect. 3 and 4]{SerreKyoto} for the precise statement and a
discussion of the Mumford--Tate conjecture for abelian varieties. (See also
\cite{ZarhinW}.)

\begin{thm}
Suppose that $f(x)\in \C[x]$ is a polynomial of degree $n\ge 10$ without
multiple roots. Let $C_f$ be the hyperelliptic curve $y^2=f(x)$ and $X=J(C_f)$
its jacobian, viewed as a complex abelian variety provided with the canonical
principal polarization $\lambda$ attached to the theta divisor. Let
$$e_{\lambda,\Q}:V_{\Q}(X)\times V_{\Q}(X) \to \Q$$
be the alternating nondegenerate $\Q$-bilinear (Riemann) form attached to
$\lambda$.

Suppose that all the coefficients of $f(x)$ lie  in a subfield $K\subset \C$
and $f(x)=(x-t)h(x)$ with $t\in K, \ h(x)\in K[x]$. Assume also that $K$ is
finitely generated over $\Q$ and the Galois group of $h(x)$ over $K$ is either
$\ST_{n-1}$ or $\A_{n-1}$.

Then:

\begin{itemize}
\item The Mumford--Tate group of $X$ coincides with
$\Gp(V_{\Q}(X),e_{\lambda,\Q})$.

\item Each Hodge class on every self-product $X^m$ of $X$ can be presented as a
linear combination of products of divisor classes. In particular, the Hodge
conjecture holds true for all $X^m$.

\item
 The Mumford--Tate conjecture holds true for $J(C_f)$. (Here $J(C_f)$ is viewed
 as an abelian variety over $K$.)
\end{itemize}

\end{thm}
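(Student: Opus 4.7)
The plan is to bootstrap Theorem \ref{MON} via the Deligne--Piatetski-Shapiro--Borovoi containment
$$\g_{\ell,X}\subseteq \mt_X\otimes_{\Q}\Q_{\ell},$$
valid for every abelian variety over a finitely generated field of characteristic zero, in order to compute $\MT_X$, and then to feed the resulting description into Howe's invariant theory for symplectic groups exactly as in the preceding section on Tate classes.

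First I would record the standard inclusions
$$\Q\I\subset\mt_X\subset \Q\I\oplus\sp(V_{\Q}(X),e_{\lambda,\Q}),$$
which after tensoring with $\Q_\ell$ give $\mt_X\otimes_{\Q}\Q_\ell\subseteq \Q_\ell\I\oplus\sp(V_\ell(X),e_\lambda)$. On the other hand, Theorem \ref{MON} asserts $\g_{\ell,X}=\Q_\ell\I\oplus\sp(V_\ell(X),e_\lambda)$, so combining this with the Deligne containment forces $\mt_X\otimes_{\Q}\Q_\ell=\Q_\ell\I\oplus\sp(V_\ell(X),e_\lambda)$. Comparing $\Q$-dimensions then yields $\mt_X=\Q\I\oplus\sp(V_{\Q}(X),e_{\lambda,\Q})$, and since $\MT_X$ is connected this gives $\MT_X=\Gp(V_{\Q}(X),e_{\lambda,\Q})$, which is the first bullet. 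The Mumford--Tate conjecture follows at once, since both $\mt_X\otimes_{\Q}\Q_\ell$ and $\g_{\ell,X}$ have just been identified with $\Q_\ell\I\oplus\sp(V_\ell(X),e_\lambda)$.

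For the Hodge conjecture on self-products $X^m$ I would repeat the argument of Section \ref{tateC} on the Betti side. A rational Hodge class on $X^m$ corresponds to an $\mt_X$-invariant in $\Hom_{\Q}(\wedge^{2i}_{\Q}(V_{\Q}(X)^m),\Q)$; since $\mt_X$ contains $\sp(V_{\Q}(X),e_{\lambda,\Q})$, every such class is in particular a symplectic invariant, and by Howe's theorem can be expressed as a polynomial in $\sp(V_{\Q}(X),e_{\lambda,\Q})$-invariant alternating bilinear forms on $V_{\Q}(X)^m$. The rational analogue of Remark \ref{selfproduct} identifies each such bilinear form with the class of a divisor on $X^m$, and hence every Hodge class is a linear combination of products of divisor classes.

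The bulk of the work has already been done in Theorem \ref{MON}; beyond that, the only external input is the Deligne--Piatetski-Shapiro--Borovoi containment $\g_{\ell,X}\subseteq \mt_X\otimes_{\Q}\Q_\ell$. The main conceptual obstacle would be verifying carefully that the invariant-theoretic picture on the Betti side is strictly parallel to the $\ell$-adic one, but this is routine because $\Sp$ is a split reductive group defined over $\Q$ and both the polarization and the representation under consideration are already defined over $\Q$, so Howe's theorem applies verbatim.
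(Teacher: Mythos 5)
Your proposal is correct and follows essentially the same route as the paper: combine Theorem \ref{MON} with the Piatetski-Shapiro--Deligne--Borovoi containment and the standard inclusion $\mt_X\subset\Q\I\oplus\sp(V_{\Q}(X),e_{\lambda,\Q})$, compare dimensions to obtain $\MT_X=\Gp(V_{\Q}(X),e_{\lambda,\Q})$ together with the Mumford--Tate conjecture, and then apply Howe's symplectic invariant theory plus the identification of invariant alternating bilinear forms on $V_{\Q}(X)^m$ with divisor classes. The only slip is terminological: Hodge classes are invariants of the Hodge Lie algebra $\mt_X\cap\sp(V_{\Q}(X),e_{\lambda,\Q})$ rather than of all of $\mt_X$ (the scalars $\Q\I$ act nontrivially on $\Hom_{\Q}(\wedge^{2i}_{\Q}(V_{\Q}(X)^m),\Q)$), but after your computation this intersection equals $\sp(V_{\Q}(X),e_{\lambda,\Q})$, which is exactly what the Howe step requires.
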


\begin{proof}
We use the arguments from \cite[p. 429]{ZarhinMMJ}. Let $\bar{K}\subset \C$ be
the algebraic closure of $K$ in $\C$. For each prime $\ell$ let us consider the
$\Q_{\ell}$-vector space
$$\Pi_{\ell}=V_{\Q}(X)\otimes_{\Q}\Q_{\ell}.$$

Then there is a well-known (comparison) isomorphism of $\Q_{\ell}$-vector
spaces \cite{MumfordAV,ZarhinDuke}
$$\gamma_{\ell}:\Pi_{\ell} \cong V_{\ell}(J(C_f))$$
such that, by a theorem of Piatetski-Shapiro--Deligne--Borovoi
\cite{Deligne,SerreKyoto},
$$\gamma_{\ell}\g_{\ell,X}\gamma_{\ell}^{-1}\subset
\mt_X\otimes_{\Q}\Q_{\ell}\subset
[\Q\I\oplus\sp((V_{\Q}(X),e_{\lambda,\Q})]\otimes_{\Q}\Q_{\ell}.$$ It follows
from Theorem \ref{MON}, the $\Q_{\ell}$-dimension of $\g_{\ell,X}$ and the
$\Q$-dimension of $\Q\I\oplus\sp(V_{\Q}(X),e_{\lambda,\Q})$ do coincide. It
follows that
$$\gamma_{\ell}\g_{\ell,X}\gamma_{\ell}^{-1}=
\mt_X\otimes_{\Q}\Q_{\ell}=
[\Q\I\oplus\sp(V_{\Q}(X),e_{\lambda,\Q})]\otimes_{\Q}\Q_{\ell}.$$ The first
equality means that the Mumford-Tate conjecture holds true for $J(C_f)$. The
second equality means that the $\Q$-dimensions of $\mt_X$ and
$\Q\I\oplus\sp(V_{\Q}(X),e_{\lambda,\Q})$ do coincide. Since $\mt_X\subset
\Q\I\oplus\sp(V_{\Q}(X),e_{\lambda,\Q})$, we conclude that
$$\mt_X=
\Q\I\oplus\sp(V_{\Q}(X),e_{\lambda,\Q}).$$ This  implies that
$\MT_X=\Gp(V_{\Q}(X),e_{\Q})$, because their $\Q$-Lie algebras do coincide.

Recall \cite{Ribet2} that the Hodge classes on a self-product $X^m$ of $X$ can
be viewed as tensor invariants of the $\Q$-Lie algebra
$$\mt_X\bigcap \sp(V_{\Q}(X),e_{\lambda,\Q})=[\Q\I\oplus\sp(V_{\Q}(X),e_{\lambda,\Q})]\bigcap
\sp(V_{\Q}(X),e_{\lambda,\Q})=$$ $$\sp(V_{\Q}(X),e_{\lambda,\Q})$$ in the
$\Q$-vector space
$$V_{\Q,m,i}:=\Hom_{\Q}(\wedge^{2j}_{\Q}(V_{\Q}(X)^m),\Q)$$ where
$$V_{\Q}(X)^m:=\bigoplus_{j=1}^{m}V_{\Q}(X).$$
The invariant theory for symplectic groups (\cite[Th. 2 on p. 543]{Howe})
implies that every
 $\sp(V_{\Q}(J(C_f)),e_{\lambda,\Q})$-invariant  tensor in $V_{\Q,m,i}$ could be
presented as a linear combination of exterior products of
$\sp(V_{\Q}(J(C_f)),e_{\lambda,\Q})$-invariants in $V_{\Q,m,1}$, i.e., of
$\sp(V_{\ell}(J(C_f)),e_{\lambda,\Q})$-invariant alternating bilinear forms on
$$V_{\Q}(J(C_f))^{m}=\bigoplus_{j=1}^{m} V_{\Q}(J(C_f)).$$
This implies \cite{Ribet2} that each Hodge class on  $X^m$  can be presented as
a linear combination of products of divisor classes. In particular, every Hodge
class on $X^m$ is algebraic, i.e., the Hodge conjecture is true for $X^m$ in
all dimensions.
\end{proof}

\section{Appendix}

\centerline{\bf Corrigendum to \cite{ZarhinMMJ}}

Page 408, Proposition 3.3(i), the first sentence. In addition,
$\pi^{\prime}(H)$ is normal in $\pi^{\prime}(G)$ and $H^{\prime}$ is normal in
$G^{\prime}$.

Page 409, line 3. The $R_j$  should be $S_j$.

Page 409, Corollary 3.4(ii). In addition, $H_{-1,\alpha}$ is normal is
$G_{-1,\alpha}$.

Page 421, Step 4. The $\Ad_{i,E}$ is the canonical central isogeny $\r_{i} \to
\r_{i}^{\Ad}$ of absolutely simple $E$-algebraic groups.


\centerline{\bf Corrigendum to \cite{ZarhinMA}}

{Theorem 1.1 on page 408}. The $B(h)$ in assertion (i) should be $B(f)$.

 {Lemma 3.5 on page 419}. The $\End^0(Y)$ should be $\End^0(X)$ and the
$Y$ should be $X$.

{Theorem 3.11 on page 422}. The $Y$ should satisfy either condition (i) or
condition (ii) (not necessarily both).


{Proof of Theorem 3.11 on page 423} should be modified as follows. The
assertion (in the last sentence of the first paragraph)  that $\Aut(X)$ is a
finite cyclic group does {\sl not} follow from Theorem 3.7. However, every
finite subgroup of $\Aut(X)$ is cyclic, because $\Aut(X)\subset \End^0(X)^{*}$
and, by Theorem 3.7, $\End^0(X)$ is a field. On the other hand, since the
isomorphism $u$ is always defined over a finite Galois extension of the ground
field $F$, the image of the cocycle-homomorphism $c$ is a finite subgroup of
$\Aut(X)$ and therefore is a (finite) cyclic group. The rest of the proof
remains unchanged.

Page 423: in the (third) displayed formula in the middle of page (the
subscript) $Y$ should be $X$, as well as in the formula in the previous line.

{Proof of Theorem 1.1 on page 429}. The $B(h)$ in the first displayed formula
should be $B(f)$. The $G_{B(h)}$ in the next line should be $G_{B(f)}$.

\end{document}